\newcommand{\numberseries}{\bfseries}   %Fontseries used for numbering
\newlength{\thmtopspace}                %Space above theorem
\newlength{\thmbotspace}                %Space below theorem
\newlength{\thmheadspace}               %Space after theorem label
\newlength{\thmindent}                  %For indenting
\newtheoremstyle{fixed bf head,slanted body}
                {\thmtopspace}{\thmbotspace}{\slshape}
                {\thmindent}{\bfseries}{.}{\thmheadspace}
                {{\numberseries \thmnumber{#2\;}}\thmname{#1}\thmnote{ (#3)}}
\newtheoremstyle{variable bf head,slanted body}
                {\thmtopspace}{\thmbotspace}{\slshape}
                {\thmindent}{\bfseries}{.}{\thmheadspace}
                {{\numberseries \thmnumber{#2\;}}\thmname{#1}\thmnote{ #3}}
\newtheoremstyle{fixed bf head,upright body}
                {\thmtopspace}{\thmbotspace}{\upshape}
                {\thmindent}{\bfseries}{.}{\thmheadspace}
                {{\numberseries \thmnumber{#2\;}}\thmname{#1}\thmnote{ (#3)}}
\newtheoremstyle{numbered paragraph}
                {\thmtopspace}{\thmbotspace}{\upshape}
                {\thmindent}{\upshape}{}{\thmheadspace}
                {{\numberseries \thmnumber{#2.}}}
\theoremstyle{fixed bf head,slanted body}
\newtheorem{res}{}[section]
\newtheorem{thm}[res]{Theorem}          \newtheorem*{thm*}{Theorem}
\newtheorem{prp}[res]{Proposition}      \newtheorem*{prp*}{Proposition}
\newtheorem{cor}[res]{Corollary}        \newtheorem*{cor*}{Corollary}
\newtheorem{lem}[res]{Lemma}            \newtheorem*{lem*}{Lemma}
\theoremstyle{variable bf head,slanted body}
     \newtheorem*{introthm*}{Theorem}
   \newtheorem*{introcor*}{Corollary}
\theoremstyle{fixed bf head,upright body}
\newtheorem{stp}[res]{Setup}            \newtheorem*{stp*}{Setup}
\newtheorem{dfn}[res]{Definition}       \newtheorem*{dfn*}{Definition}
     \newtheorem*{con*}{Construction}
\newtheorem{obs}[res]{Observation}      \newtheorem*{obs*}{Observation}
\newtheorem{rmk}[res]{Remark}           \newtheorem*{rmk*}{Remark}
\newtheorem{exa}[res]{Example}          \newtheorem*{exa*}{Example}
         \newtheorem*{qst*}{Question}
\theoremstyle{numbered paragraph}
\newtheorem{ipg}[res]{}
\newlength{\thmlistleft}        %leftmargin
\newlength{\thmlistright}       %rightmargin
\newlength{\thmlistpartopsep}   %partopsep
\newlength{\thmlisttopsep}      %topsep
\newlength{\thmlistparsep}      %parsep
\newlength{\thmlistitemsep}     %itemsep
\newcounter{eqc} 
  {\end{list}}%
\newcounter{prt}
\newenvironment{prt}{\begin{list}{\upshape (\alph{prt})}%
    {\usecounter{prt}%
      \setlength{\leftmargin}{\thmlistleft}%
      \setlength{\labelwidth}{\thmlistleft}%
      \setlength{\rightmargin}{\thmlistright}%
      \setlength{\partopsep}{\thmlistpartopsep}%
      \setlength{\topsep}{\thmlisttopsep}%
      \setlength{\parsep}{\thmlistparsep}%
      \setlength{\itemsep}{\thmlistitemsep}}}%
  {\end{list}}%
  \newcommand{\proofoftag}[2][:]{(#2)#1}
\newcommand{\pgref}[1]{\ref{#1}}
\newcommand{\thmref}[2][Theorem~]{#1\pgref{thm:#2}}
\newcommand{\corref}[2][Corollary~]{#1\pgref{cor:#2}}
\newcommand{\prpref}[2][Proposition~]{#1\pgref{prp:#2}}
\newcommand{\lemref}[2][Lemma~]{#1\pgref{lem:#2}}
\newcommand{\dfnref}[2][Definition~]{#1\pgref{dfn:#2}}
\newcommand{\exaref}[2][Example~]{#1\pgref{exa:#2}}
\newcommand{\rmkref}[2][Remark~]{#1\pgref{rmk:#2}}
\newcommand{\secref}[2][Section~]{#1\ref{sec:#2}}
\renewcommand{\eqref}[1]{(\pgref{eq:#1})}
\newcommand{\stpref}[2][Setup~]{#1\pgref{stp:#2}}
\def\@nobreak@#1{\mathchoice%
  {\nobreakdef@\displaystyle\f@size{#1}}%
  {\nobreakdef@\nobreakstyle\tf@size{\firstchoice@false #1}}%
  {\nobreakdef@\nobreakstyle\sf@size{\firstchoice@false #1}}%
  {\nobreakdef@\nobreakstyle\ssf@size{\firstchoice@false #1}}%
  \check@mathfonts}%
\def\nobreakdef@#1#2#3{\hbox{{%
                    \everymath{#1}%
                    \let\f@size#2\selectfont%
                    #3}}}%
\DeclareFontFamily{T1}{cmex}{}
\DeclareFontShape{T1}{cmex}{m}{n}{<-> s * [0.89] cmex10}{}
\DeclareSymbolFont{cmlargesymbols}{T1}{cmex}{m}{n}
\DeclareMathSymbol{\mycoprod}{\mathop}{cmlargesymbols}{"60} 
\DeclareMathSymbol{\myprod}{\mathop}{cmlargesymbols}{"51} \let\prod\myprod
\DeclareSymbolFont{usualmathcal}{OMS}{cmsy}{m}{n}
\DeclareSymbolFontAlphabet{\mathcal}{usualmathcal}
\DeclareSymbolFont{letters}{OML}{txmi}{m}{it}
\DeclareMathSymbol{\alpha}{\mathord}{letters}{"0B}
\DeclareMathSymbol{\beta}{\mathord}{letters}{"0C}
\DeclareMathSymbol{\gamma}{\mathord}{letters}{"0D}
\DeclareMathSymbol{\delta}{\mathord}{letters}{"0E}
\DeclareMathSymbol{\epsilon}{\mathord}{letters}{"0F}
\DeclareMathSymbol{\zeta}{\mathord}{letters}{"10}
\DeclareMathSymbol{\eta}{\mathord}{letters}{"11}
\DeclareMathSymbol{\theta}{\mathord}{letters}{"12}
\DeclareMathSymbol{\iota}{\mathord}{letters}{"13}
\DeclareMathSymbol{\kappa}{\mathord}{letters}{"14}
\DeclareMathSymbol{\lambda}{\mathord}{letters}{"15}
\DeclareMathSymbol{\mu}{\mathord}{letters}{"16}
\DeclareMathSymbol{\nu}{\mathord}{letters}{"17}
\DeclareMathSymbol{\xi}{\mathord}{letters}{"18}
\DeclareMathSymbol{\pi}{\mathord}{letters}{"19}
\DeclareMathSymbol{\rho}{\mathord}{letters}{"1A}
\DeclareMathSymbol{\sigma}{\mathord}{letters}{"1B}
\DeclareMathSymbol{\tau}{\mathord}{letters}{"1C}
\DeclareMathSymbol{\upsilon}{\mathord}{letters}{"1D}
\DeclareMathSymbol{\phi}{\mathord}{letters}{"1E}
\DeclareMathSymbol{\chi}{\mathord}{letters}{"1F}
\DeclareMathSymbol{\psi}{\mathord}{letters}{"20}
\DeclareMathSymbol{\omega}{\mathord}{letters}{"21}
\DeclareMathSymbol{\varepsilon}{\mathord}{letters}{"22}
\DeclareMathSymbol{\vartheta}{\mathord}{letters}{"23}
\DeclareMathSymbol{\varpi}{\mathord}{letters}{"24}
\DeclareMathSymbol{\varrho}{\mathord}{letters}{"25}
\DeclareMathSymbol{\varsigma}{\mathord}{letters}{"26}
\DeclareMathSymbol{\varphi}{\mathord}{letters}{"27}
\DeclareMathSymbol{\Gamma}{\mathord}{letters}{"00}
\DeclareMathSymbol{\Delta}{\mathord}{letters}{"01}
\DeclareMathSymbol{\Theta}{\mathord}{letters}{"02}
\DeclareMathSymbol{\Lambda}{\mathord}{letters}{"03}
\DeclareMathSymbol{\Xi}{\mathord}{letters}{"04}
\DeclareMathSymbol{\Pi}{\mathord}{letters}{"05}
\DeclareMathSymbol{\Sigma}{\mathord}{letters}{"06}
\DeclareMathSymbol{\Upsilon}{\mathord}{letters}{"07}
\DeclareMathSymbol{\Phi}{\mathord}{letters}{"08}
\DeclareMathSymbol{\Psi}{\mathord}{letters}{"09}
\DeclareMathSymbol{\Omega}{\mathord}{letters}{"0A}
\DeclareMathSymbol{\upGamma}{\mathalpha}{operators}{"00}
\DeclareMathSymbol{\upDelta}{\mathalpha}{operators}{"01}
\DeclareMathSymbol{\upTheta}{\mathalpha}{operators}{"02}
\DeclareMathSymbol{\upLambda}{\mathalpha}{operators}{"03}
\DeclareMathSymbol{\upXi}{\mathalpha}{operators}{"04}
\DeclareMathSymbol{\upPi}{\mathalpha}{operators}{"05}
\DeclareMathSymbol{\upSigma}{\mathalpha}{operators}{"06}
\DeclareMathSymbol{\upUpsilon}{\mathalpha}{operators}{"07}
\DeclareMathSymbol{\upPhi}{\mathalpha}{operators}{"08}
\DeclareMathSymbol{\upPsi}{\mathalpha}{operators}{"09}
\DeclareMathSymbol{\upOmega}{\mathalpha}{operators}{"0A}
\renewcommand{\con}[1]{\textnormal{({\small #1})}}
\newcommand{\cA}{\mathcal{A}}
\newcommand{\cB}{\mathcal{B}}
\newcommand{\cC}{\mathcal{C}}
\newcommand{\Fu}[1]{\mathrm{#1}}
\newcommand{\Ld}[2]{\mathrm{L}_{#1}#2}
\newcommand{\Rd}[2]{\mathrm{R}^{#1}#2}
\newcommand{\Fix}[3][(\Fu{F},\Fu{G})]{\mathsf{Fix}^{#1}_{#2}(#3)}
\newcommand{\coFix}[3][(\Fu{F},\Fu{G})]{\mathsf{coFix}^{#1}_{#2}(#3)}
\renewcommand{\H}[2]{\mathrm{H}_{#1}(#2)}
\newcommand{\Hnop}[2]{\mathrm{H}_{#1}#2}
\newcommand{\Mod}[1]{\mathsf{Mod}(#1)}
\renewcommand{\mod}[1]{\mathsf{mod}(#1)}
\newcommand{\K}[1]{\mathsf{K}(#1)}
\newcommand{\Ker}[1]{\operatorname{Ker}#1}
\newcommand{\Coker}[1]{\operatorname{Coker}#1}
\newcommand{\Lo}{\Lambda^\mathrm{o}}
\newcommand{\Go}{\Gamma^\mathrm{o}}
\newcommand{\Proj}[1]{\mathrm{Prj}(#1)}
\newcommand{\Inj}[1]{\mathrm{Inj}(#1)}
\newcommand{\add}[2][]{\mathrm{add}_{#1}(#2)}
\newcommand{\Ext}[4]{\operatorname{Ext}_{#1}^{\mspace{1mu}#2}(#3,#4)}
\newcommand{\Tor}[4]{\operatorname{Tor}^{\mspace{2mu}#1}_{#2}(#3,#4)}
\newcommand{\Hom}[3]{\operatorname{Hom}_{#1}(#2,#3)}
\renewcommand{\L}[2][\mathfrak{a}]{\upLambda^{\mspace{-2mu}#1}#2}
\newcommand{\G}[2][\mathfrak{a}]{\upGamma_{\mspace{-2mu}#1}#2}  
\newcommand{\RHom}[3]{\mathbf{R}\mspace{-1mu}\operatorname{Hom}_{#1}(#2,#3)}
\newcommand{\LL}[2][\mathfrak{a}]{\mathbf{L}\upLambda^{\mspace{-2mu}#1}#2}
\newcommand{\RG}[2][\mathfrak{a}]{\mathbf{R}\upGamma_{\mspace{-2mu}#1}#2}
\newcommand{\pd}[2]{\mathrm{pd}_{#1}(#2)}
\newcommand{\id}[2]{\mathrm{id}_{#1}(#2)}
\newcommand{\D}[1]{\mathsf{D}(#1)}
\newcommand{\gen}[3]{\mathsf{gen}^{#1}_{#2}(#3)}
\newcommand{\cogen}[3]{\mathsf{cogen}_{#1}^{#2}(#3)}
\renewcommand{\res}[3]{\mathsf{res}^{#1}_{#2}(#3)}
\newcommand{\cores}[3]{\mathsf{cores}_{#1}^{#2}(#3)}
\newcommand{\C}[1][\mathfrak{a}]{\mathrm{C}(#1)}
\newcommand{\lc}[3][\mathfrak{a}]{\mathrm{H}_{#1}^{#2}(#3)}
\newcommand{\lce}[2][\mathfrak{a}]{\mathrm{H}_{#1}^{#2}}
\newcommand{\lh}[3][\mathfrak{a}]{\mathrm{H}^{#1}_{#2}(#3)}
\newcommand{\lhe}[2][\mathfrak{a}]{\mathrm{H}^{#1}_{#2}}
\newcommand{\CM}[3][\mathfrak{a}]{\mathsf{CM}_{#1}^{#2}(#3)}
\begin{document}

\vspace*{-0.1ex}

\title[Equivalences from tilting theory and commutative algebra]{Equivalences from tilting theory and commutative algebra from the adjoint functor point of view}
\author{Olgur Celikbas \ }
\address{Department of Mathematics, West Virginia University, Morgantown, WV 26506 U.S.A} 
\email{olgur.celikbas@math.wvu.edu}

\author{ \ Henrik Holm}
\address{Department of Mathematical Sciences, Universitetsparken 5, University of Co\-penhagen, 2100 Copenhagen {\O}, Denmark} 
\email{holm@math.ku.dk}
\urladdr{http://www.math.ku.dk/\~{}holm/}

\keywords{Adjoint functors; Brenner--Butler theorem; local (co)homology; Foxby equivalence; Matlis duality; relative Cohen--Macaulay modules; Sharp's equivalence; tilting modules; Wakamatsu's duality.}

\subjclass[2010]{13C14, 13D07, 13D45, 16E30, 18G10}

%13-XX COMMUTATIVE ALGEBRA 
%13C14 Cohen-Macaulay modules  
%13D07 Homological functors on modules (Tor, Ext, etc.) 
%13D45 Local cohomology  

%16-XX ASSOCIATIVE RINGS AND ALGEBRAS
%16E30 Homological functors on modules (Tor, Ext, etc.) 

%18-XX CATEGORY THEORY; HOMOLOGICAL ALGEBRA  
%18G10 Resolutions; derived functors  

\begin{abstract}
  We give a category theoretic approach to several known equivalences from (classic) tilting theory and commutative algebra. Furthermore, we apply our main results to establish a duality theory for relative Cohen--Macaulay modules in the sense of Hellus, Schenzel, and Zargar.
\end{abstract}

\maketitle

%$\alpha \beta \gamma \delta \epsilon \varepsilon \zeta \eta \theta \vartheta \iota \kappa \lambda \mu \nu o \pi \varpi \rho \varrho \sigma \varsigma \tau \upsilon \phi \varphi \chi \psi \omega$

\section{Introduction}
\label{sec:Introduction}

In this paper, we consider an adjunction \mbox{$\Fu{F} \colon\mspace{-2mu} \cA \rightleftarrows \cB \colon \Fu{G}$} between abelian categories. Even though the pair $(\Ld{\ell}{\Fu{F}},\Rd{\ell}{\Fu{G}})$ of $\ell^\mathrm{\,th}$ (left/right) derived functors is generally not an adjunction $\cA \rightleftarrows \cB$, one can obtain an adjunction, and even an adjoint equivalence, from these functors by restricting them appropriately. More precisely, in \dfnref{Fix-Cofix} we introduce two subcategories $\Fix[]{\ell}{\cA}$, the category of \emph{$\ell$-fixed} objects in $\cA$, and $\coFix[]{\ell}{\cB}$, the category of \emph{$\ell$-cofixed} objects in $\cB$, and show in \thmref{equivalence} that 
one gets an adjoint equivalence: \vspace*{-0.25ex}
\begin{equation}
  \label{eq:intro1}
  \xymatrix@C=3pc{
    \Fix[]{\ell}{\cA}\, \ar@<0.6ex>[r]^-{\Ld{\ell}{\Fu{F}}} & \,\coFix[]{\ell}{\cB} \ar@<0.6ex>[l]^-{\Rd{\ell}{\Fu{G}}}
  }\!. \vspace*{-0.25ex}
\end{equation} 
When the adjunction $(\Fu{F},\Fu{G})$ is suitably nice---more precisely, when it is a \emph{tilting adjunction} in the sense of \dfnref{tilting-adjunction}---the adjoint equivalence \eqref{intro1} takes the simpler form: \vspace*{-0.25ex}
\begin{equation}
  \label{eq:intro2}
  \xymatrix@C=3pc{
  {
  \{ 
     A \in \cA \ | \ 
     \Ld{i}{\Fu{F}}(A)=0 \text{ for } i \neq \ell
  \mspace{1mu}\}
  }
  \ar@<0.6ex>[r]^-{\Ld{\ell}{\Fu{F}}}
  &
  {
  \{ 
     B \in \cB  \ | \
     \Rd{i}{\Fu{G}}(B)=0 \text{ for } i \neq \ell
  \mspace{1mu}\}
  }
  \ar@<0.6ex>[l]^-{\Rd{\ell}{\Fu{G}}}
  }\!,\vspace*{-0.25ex}
\end{equation}      
as shown in \thmref{equivalence-simplified}. These equivalences, which are our main results, are proved in \secref{Fixed}. In \secref{Applications} we apply them to various situations and recover a number of known results from tilting theory and commutative algebra, such as the Brenner--Butler~and Happel theorem \cite{BrennerButler, Happel}, Wa\-ka\-matsu's duality \cite{Wakamatsu04}, and Foxby equivalence \cite{LLAHBF97, HBF72}. Details~can be found in \corref[Corollaries~]{BBH}, \corref[]{Wakamatsu}, and \corref[]{Foxby}.

In \secref{Derivatives} we investigate the equivalence \eqref{intro1} further in the special case where $\ell=0$. Under suitable hypotheses, we show in \thmref{equivalence-gen}  that for any $X \in \Fix[]{0}{\cA}$ and $d \geqslant 0$, \eqref{intro1} restricts to an equivalence: \vspace*{-0.25ex}
\begin{equation}
  \label{eq:intro3}
  \xymatrix@C=3pc{
    \Fix[]{0}{\cA} \mspace{1mu}\cap\mspace{1mu} \gen{\cA}{d}{X}\, \ar@<0.6ex>[r]^-{\Fu{F}}  & \, \coFix[]{0}{\cB} \mspace{1mu}\cap\mspace{1mu} \gen{\cB}{d}{\Fu{F}X} \ar@<0.6ex>[l]^-{\Fu{G}}
  },\vspace*{-0.25ex}
\end{equation}  
where $\gen{\cA}{d}{X}$ is the full subcategory of $\cA$ consisting of objects that are finitely built from $X$ in the sense of \dfnref{gen-res}. Although \eqref{intro3}
looks more technical than \eqref{intro1} and \eqref{intro2}, it too has useful applications, for example, it contains as a special case Matlis'~duality~\cite{EMt58}:
\begin{displaymath} 
  \xymatrix@C=6pc{
    \{ \textnormal{Finitely generated $R$-modules} \}
    \ar@<0.6ex>[r]^-{\Hom{R}{-}{E_R(k)}} 
    & 
    \{ \textnormal{Artinian $R$-modules} \}
    \ar@<0.6ex>[l]^-{\Hom{R}{-}{E_R(k)}}    
  }\!,
\end{displaymath}
where $R$ is a commutative noetherian local complete ring; see \corref{Matlis}. \thmref{equivalence-res} is a variant of \eqref{intro3} which yields Sharp's equivalence \cite{RYS72} for finitely generated modules of finite projective/injective dimension over Cohen--Macaulay rings; see \corref{Sharp}.

In \secref{CM} we apply the equivalence \eqref{intro1} to study relative Cohen--Macaulay modules. To explain what this is about, recall that for a (non-zero) finitely generated module $M$ over a commutative noetherian local ring $(R,\mathfrak{m},k)$, which we assume is complete, one has
\begin{displaymath}
  \mathrm{depth}_RM = \min\{i\,|\,\lc[\mathfrak{m}]{i}{M}\neq 0\}
  \quad \text{ and } \quad
  \mathrm{dim}_RM = \max\{i\,|\,\lc[\mathfrak{m}]{i}{M}\neq 0\}\;,
\end{displaymath}
where $\lce[\mathfrak{m}]{i}$ denotes the $i^\mathrm{\,th}$ local cohomology module w.r.t.~$\mathfrak{m}$. Hence $M$ is Cohen--Macaulay (CM) of dimension $t$ if and only if $\lc[\mathfrak{m}]{i}{M}=0$ for $i \neq t$. When $R$ itself is CM, the most important and useful fact about the category of $t$-dimensional CM modules is the duality
\begin{displaymath}
  \xymatrix@C=3.5pc{
  \{M \in \mod{R} \,|\, \lc[\mathfrak{m}]{i}{M}=0 \textnormal{ for } i \neq t \}  
  \ar@<0.6ex>[r]^-{\Ext{R}{c-t}{-}{\upOmega}
  }
  &
  \{M \in \mod{R} \,|\, \lc[\mathfrak{m}]{i}{M}=0 \textnormal{ for } i \neq t \}
  \ar@<0.6ex>[l]^-{\Ext{R}{c-t}{-}{\upOmega}}
  }\!,
\end{displaymath}
where $c$ is the Krull dimension of $R$ and $\upOmega$ is the dualizing module. The theory of CM modules over CM rings is an active research area and in recent papers by e.g.~Hellus and Schenzel \cite{HellusSchenzel} and Zargar \cite{Zargar}, it was suggested to investigate this theory relative to an ideal $\mathfrak{a} \subset R$. That is, in the case where $R$ is \emph{relative CM w.r.t.~$\mathfrak{a}$}, meaning that $\lc{i}{R}=0$ for $i \neq c$ where $\mathrm{depth}_R(\mathfrak{a},R) = c = \mathrm{cd}_R(\mathfrak{a},R)$, one wishes to study the category
\begin{equation}
  \label{eq:intro4}
  \{M \in \mod{R} \,|\, \lc{i}{M}=0 \textnormal{ for } i \neq t \} \qquad \text{(for any $t$)}
\end{equation}
of finitely generated \emph{relative CM $R$-modules of cohomological dimension $t$ w.r.t.~$\mathfrak{a}$}. \mbox{Towards} a relative CM theory, the first thing one should start looking for is a duality on the category \eqref{intro4}. Unfortunately such a duality does not exist in general; indeed for $\mathfrak{a}=0$ (the~zero~ideal) and $t=0$ the category in \eqref{intro4} is the category $\mod{R}$ of all finitely generated $R$-modules, which is self-dual only in very special cases (if $R$ is Artinian). To fix this problem, we introduce in \dfnref{CM} another category, $\CM{t}{R}$, of (not necessarily finitely generated) $R$-modules; it is an extension of the category \eqref{intro4} in the sense that:
\begin{displaymath}
  \CM{t}{R} \,\cap\, \mod{R} \,=\, \{M \in \mod{R} \,|\, \lc{i}{M}=0 \textnormal{ for all } i \neq t \}\;.
\end{displaymath}
Our main result about this (larger) category is that it is self-dual. We show in \thmref{CM2} that if $R$ is relative CM w.r.t.~$\mathfrak{a}$ with $\mathrm{depth}_R(\mathfrak{a},R) = c = \mathrm{cd}_R(\mathfrak{a},R)$, then there is a duality:
\begin{equation}
  \label{eq:intro5}
  \xymatrix@C=5pc{
    \CM{t}{R} \ar@<0.6ex>[r]^-{\Ext{R}{c-t}{-}{\upOmega_\mathfrak{a}}} & \CM{t}{R} \ar@<0.6ex>[l]^-{\Ext{R}{c-t}{-}{\upOmega_\mathfrak{a}}}
  }\!,
\end{equation}
where $\upOmega_\mathfrak{a}$ is the module from \dfnref{Omega}. It is worth pointing out two extreme cases of this duality: For $\mathfrak{a}=\mathfrak{m}$ a ring is relative CM w.r.t.~$\mathfrak{a}$ if and only if it is CM in the ordinary sense, and in this case $c$ is the Krull dimension of $R$ and $\upOmega_\mathfrak{a}=\upOmega$ is a dualizing module; see \exaref{Omega}. Thus \eqref{intro5} extends the classic duality for CM modules of Krull dimension $t$ mentioned above. For $\mathfrak{a}=0$ any ring is relative CM w.r.t.~$\mathfrak{a}$, and \eqref{intro5} specializes, in view of \exaref[Examples~]{CM-0} and \exaref[]{Omega}, to the (well-known and almost trivial) duality:
\begin{displaymath} 
  \xymatrix@C=5.5pc{
    \{ \textnormal{Matlis reflexive $R$-modules} \}
    \ar@<0.6ex>[r]^-{\Hom{R}{-}{E_R(k)}} 
    & 
    \{ \textnormal{Matlis reflexive $R$-modules} \}
    \ar@<0.6ex>[l]^-{\Hom{R}{-}{E_R(k)}}    
  }\!.
\end{displaymath}
Hence \eqref{intro5} is a family of dualities, parameterized by ideals $\mathfrak{a} \subset R$, that connects the known dualities for (classic) CM modules and Matlis reflexive modules.

We end this introduction by explaining how our work is related to the literature: 

For $\ell=0$ the equivalence \eqref{intro1} follows from Frankild and J{\o}rgensen \cite[Thm.~(1.1)]{AFrPJr02} as $(\Ld{0}{\Fu{F}},\Rd{0}{\Fu{G}}) = (\Fu{F},\Fu{G})$ is an adjunction $\cA \rightleftarrows \cB$ to begin with. For  $\ell>0$ it requires some more work as the pair $(\Ld{\ell}{\Fu{F}},\Rd{\ell}{\Fu{G}})$ is not an adjunction. Nevertheless, having made the necessary preparations, 
the proof of the adjoint equivalence \eqref{intro1} is completely formal.

The idea of reproving and extending known equivalences/dualities from commutative algebra via an abstract approach, like we do, is certainly not new. In fact, this is the main idea in, for example, \cite{AFrPJr02,AFrPJr04} by Frankild and J{\o}rgensen, however, these papers focus on the derived category setting, whereas we are interested in the the abelian category setting. 

Concerning our work on relative CM modules in \secref{CM}: The duality \eqref{intro5} is new but related results, again in the derived category setting, can be found in \cite{AFrPJr04}, Porta, Shaul, and Yekutieli \cite[Sect.~7]{PSY}, and Vyas and Yekutieli \cite[Sect.~8]{VyasYekutieli} (MGM equivalence). 

\enlargethispage{4.2ex}

\section{Preliminaries and technical lemmas}

For an abelian category $\cA$, we write $\K{\cA}$ for its homotopy category.

\begin{ipg}  
  A chain map $\alpha \colon X \to Y$ between complexes $X$ and $Y$ in an abelian category is called a \emph{quasi-isomorphism} if $\H{n}{\alpha} \colon \H{n}{X} \to \H{n}{Y}$ is an isomorphism for every $n \in \mathbb{Z}$. 
  
For a complex $X$ and an integer $\ell$ we write $\upSigma^\ell X$ for the \emph{$\ell^\mathrm{\mspace{2mu}th}$ translate} of $X$; this complex is defined by $(\upSigma^\ell X)_n = X_{n-\ell}$ and \smash{$\partial^{\mspace{1mu}\upSigma^\ell X}_n = (-1)^\ell \partial^X_{n-\ell}$} for $n \in \mathbb{Z}$.
\end{ipg}

\begin{ipg}
  \label{P-I}
  If $\cA$ is an abelian category with enough projectives, then we write $\Fu{P}(A)$ for any projective re\-so\-lu\-tion of $A \in \cA$. By the unique, up to homotopy, lifting property of projective resolutions one gets a well-defined functor \mbox{$\Fu{P} \colon \cA \to \K{\cA}$}, and we write $\pi_A \colon \Fu{P}(A) \to A$ for the ca\-no\-ni\-cal quasi-isomorphism.
  
Dually, if $\cB$ is an abelian category with enough injectives, then we write $\Fu{I}(B)$ for any injective re\-so\-lu\-tion of $B \in \cB$. This yields a well-defined functor \mbox{$\Fu{I} \colon \cB \to \K{\cB}$} and we write $\iota_B \colon B \to \Fu{I}(B)$ for the ca\-no\-ni\-cal quasi-isomorphism.  
\end{ipg}
\begin{dfn}
  \label{dfn:concentrated}
   Let $\cA$ be an abelian category and let $\ell \in \mathbb{Z}$. A complex $X$ in $\cA$ is said to have its \emph{homology concentrated in degree $\ell$} if one has $\H{i}{X}=0$ for all $i \neq \ell$.
\end{dfn}

\begin{lem}
  \label{lem:P-iso}
  Let $\cA$ be an abelian category with enough projectives and let $\ell \in \mathbb{Z}$. Let $A$ be an object in $\cA$ and let $X$ be a complex in $\cA$ whose homology is concentrated in degree~$\ell$. 
%  There is a natural isomorphism of abelian groups
There is an isomorphism of abelian groups, natural in both $A$ and $X$, given by:
\begin{displaymath}
    \xymatrix{
      \Hom{\cA}{A}{\H{\ell}{X}}
      \ar[r]^-{\smash{\text{\raisebox{0.5ex}{$u^\ell_{A,X}$}}}}_-{\cong}         
      &
      \Hom{\K{\cA}}{\Fu{P}(A)}{\upSigma^{-\ell}X}
    }\!,
\end{displaymath}        
whose inverse is induced by the functor $\H{0}{-}$. Furthermore, a morphism \mbox{$\sigma \colon A \to \H{\ell}{X}$} in $\cA$ is an isomorphism if and only if \smash{$u^\ell_{A,X}(\sigma) \colon \Fu{P}(A) \to \upSigma^{-\ell}X$} is a quasi-isomorphism.
\end{lem}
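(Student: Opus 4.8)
The plan is to run the classical comparison argument for projective resolutions, with the role of a second $A$-resolution played instead by the complex $Y:=\upSigma^{-\ell}X$. By \dfnref{concentrated} one has $\H{n}{Y}=0$ for all $n\neq 0$, and unravelling the definition of the translate gives $\H{0}{Y}=\H{\ell}{X}$. Write $P:=\Fu{P}(A)$, so $P$ is a complex of projectives with $P_n=0$ for $n<0$ and $\H{n}{P}=0$ for $n\neq 0$, while $\pi_A$ identifies $\H{0}{P}$ with $A$.

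First I would construct $u^\ell_{A,X}$ directly. Given $\sigma\colon A\to\H{\ell}{X}$, use projectivity of $P_0$ to lift the composite $P_0\twoheadrightarrow A\xrightarrow{\sigma}\H{0}{Y}$ along the canonical surjection $\Ker{\partial^Y_0}\twoheadrightarrow\H{0}{Y}$ to a map $f_0\colon P_0\to\Ker{\partial^Y_0}\subseteq Y_0$, and then inductively lift $P_n\to P_{n-1}\xrightarrow{f_{n-1}}Y_{n-1}$ through $\partial^Y_n$. At the $n$-th step the chain-map relations already in force land the relevant composite in $\Ker{\partial^Y_{n-1}}$, which for $n-1\geqslant 1$ is the image of $\partial^Y_n$ since $\H{n-1}{Y}=0$, so projectivity of $P_n$ supplies $f_n$; in negative degrees $P_n=0$, so nothing is needed. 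The same bookkeeping shows more generally that any two chain maps $P\to Y$ inducing the same morphism $A\to\H{0}{Y}$ differ by a null-homotopic map (using that their difference kills homology in degree $0$ at the bottom step, and $\H{n}{Y}=0$ for $n\geqslant 1$ above); in particular $[f]\in\Hom{\K{\cA}}{P}{Y}$ depends only on $\sigma$, and we set $u^\ell_{A,X}(\sigma):=[f]$, which is clearly additive.

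Next I would exhibit the inverse. For a chain map $g\colon P\to Y$ the identity $\partial^Y_0 g_0=g_{-1}\partial^P_0=0$ forces $g_0$ to factor through $\Ker{\partial^Y_0}$, so $g$ induces a morphism $\H{0}{g}\colon\H{0}{P}=A\to\H{0}{Y}=\H{\ell}{X}$; since homotopic maps agree on homology, the rule $[g]\mapsto\H{0}{g}$ is a well-defined map $\Hom{\K{\cA}}{P}{Y}\to\Hom{\cA}{A}{\H{\ell}{X}}$. By the choice of $f_0$ it sends $u^\ell_{A,X}(\sigma)$ to $\sigma$, and conversely $g$ and $u^\ell_{A,X}(\H{0}{g})$ induce the same morphism $A\to\H{0}{Y}$, hence are homotopic by the uniqueness above; thus $u^\ell_{A,X}$ is bijective with the asserted inverse. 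Naturality in $A$ and in $X$ is then automatic, since $[g]\mapsto\H{0}{g}$ is manifestly natural: $\H{0}{-}$ is a functor, $\H{0}{\Fu{P}(\varphi)}=\varphi$ for a morphism $\varphi$ in $\cA$, and $\H{0}{\upSigma^{-\ell}\psi}=\H{\ell}{\psi}$ for a chain map $\psi$.

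Finally, for the last assertion put $f=u^\ell_{A,X}(\sigma)$. For $n\neq 0$ both $\H{n}{P}$ and $\H{n}{Y}$ vanish, so $\H{n}{f}$ is trivially an isomorphism, whereas $\H{0}{f}=\sigma$; hence $f$ is a quasi-isomorphism precisely when $\sigma$ is an isomorphism. The only point demanding care is the inductive lift together with its null-homotopy, and it is there that the full hypothesis — homology concentrated in degree $\ell$, not merely vanishing below $\ell$ — enters on the upper side through $\H{n}{Y}=0$ for $n\geqslant 1$; there is no shortcut via K-projectivity, since a projective resolution need not be K-projective.
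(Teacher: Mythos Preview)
Your proof is correct, but it follows a different route from the paper's. The paper argues abstractly through the derived category: it uses that $\cA$ embeds fully faithfully in $\D{\cA}$, that $A\cong\Fu{P}(A)$ and $\H{\ell}{X}\cong\upSigma^{-\ell}X$ in $\D{\cA}$ (the latter because the homology is concentrated in degree~$\ell$), and finally that $\Hom{\D{\cA}}{\Fu{P}(A)}{-}\cong\Hom{\K{\cA}}{\Fu{P}(A)}{-}$ since $\Fu{P}(A)$ is a bounded below complex of projectives. You instead unfold the comparison lemma by hand, building the lift $f$ degree by degree and proving uniqueness up to homotopy directly. Your approach is more self-contained and makes the inverse $\H{0}{-}$ explicit without any appeal to derived-category formalism; the paper's is shorter and more conceptual. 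One minor remark: your closing sentence about K-projectivity is slightly off target---the paper's argument does not invoke K-projectivity either, only the (weaker, and sufficient) fact that a bounded below complex of projectives computes $\Hom{\D{\cA}}{-}{-}$ as $\Hom{\K{\cA}}{-}{-}$. Also, at the step $n=1$ your phrasing ``for $n-1\geqslant 1$'' leaves the base case implicit; it would be cleaner to note there that $f_0\partial^P_1$ maps to zero in $\H{0}{Y}$ because $P_1\to P_0\to A$ is already zero, hence lands in $\operatorname{Im}\partial^Y_1$.
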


\begin{proof}
  Let $\D{\cA}$ be the derived category of $\cA$. As $\cA$ is a full subcatgory of $\D{\cA}$, we have $\Hom{\cA}{A}{\H{\ell}{X}} \cong \Hom{\D{\cA}}{A}{\H{\ell}{X}}$. In $\D{\cA}$ one has natural isomorphisms $A \cong \Fu{P}(A)$ and \smash{$\H{\ell}{X} \cong \upSigma^{-\ell}X$}, as the homology of $X$ is concentrated in degree $\ell$, and consequently $\Hom{\D{\cA}}{A}{\H{\ell}{X}} \cong \Hom{\D{\cA}}{\Fu{P}(A)}{\upSigma^{-\ell}X}$. It is well-known that $\Hom{\D{\cA}}{\Fu{P}(A)}{Y} \cong \Hom{\K{\cA}}{\Fu{P}(A)}{Y}$ for any complex $Y$ in $\cA$ since $\Fu{P}(A)$ is a bounded below complex of projectives. By composing these natural isomorphisms, the assertion follows.
\end{proof}

The next lemma is proved similarly.

\begin{lem}
  \label{lem:I-iso}
  Let $\cB$ be an abelian category with enough injectives and let $\ell \in \mathbb{Z}$. Let $B$ be an object in $\cB$ and let $Y$ be a complex in $\cB$ whose homology is concentrated in degree~$\ell$. 
  %There is a natural isomorphism of abelian groups,
There is an isomorphism of abelian groups, natural in both $B$ and $Y$, given by:  
\begin{displaymath}
    \xymatrix{
      \Hom{\cB}{\H{\ell}{Y}}{B}
      \ar[r]^-{\smash{\text{\raisebox{0.5ex}{$v^\ell_{Y,B}$}}}}_-{\cong}       
      &
      \Hom{\K{\cB}}{\upSigma^{-\ell}Y}{\Fu{I}(B)}
    }\!,
\end{displaymath}        
whose inverse is induced by the functor $\H{0}{-}$. Furthermore, a morphism \mbox{$\tau \colon \H{\ell}{Y} \to B$} in $\cB$ is an isomorphism if and only if \smash{$v^\ell_{Y,B}(\tau) \colon \upSigma^{-\ell}Y \to \Fu{I}(B)$} is a quasi-isomorphism.\qed
\end{lem}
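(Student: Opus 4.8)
The plan is to dualize the proof of \lemref{P-iso} by passing to the opposite category. Concretely, set $\cA = \cB^{\mathrm{op}}$; this is an abelian category with enough projectives, since $\cB$ has enough injectives, and the projective resolution functor $\Fu{P}$ on $\cA$ is (up to the usual translation/reversal conventions) the injective resolution functor $\Fu{I}$ on $\cB$. Under the contravariant identification $\Hom{\cB}{-}{-} = \Hom{\cB^{\mathrm{op}}}{-}{-}$ applied with arguments swapped, an object $B \in \cB$ becomes an object $A := B$ of $\cA$, a complex $Y$ in $\cB$ with homology concentrated in degree $\ell$ becomes a complex in $\cA$ with homology concentrated in degree $-\ell$ (homology turns into cohomology), and the group $\Hom{\cB}{\H{\ell}{Y}}{B}$ becomes $\Hom{\cA}{A}{\H{-\ell}{Y^{\mathrm{op}}}}$ after matching up the degree conventions. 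Applying \lemref{P-iso} in $\cA$ then yields an isomorphism onto $\Hom{\K{\cA}}{\Fu{P}(A)}{\upSigma^{\ell}Y^{\mathrm{op}}}$, and translating back to $\cB$ gives the desired isomorphism $v^\ell_{Y,B}$ onto $\Hom{\K{\cB}}{\upSigma^{-\ell}Y}{\Fu{I}(B)}$. Since passing to the opposite category reverses arrows but preserves the homotopy relation, the naturality in both variables and the fact that $\H{0}{-}$ induces the inverse are carried over verbatim.

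Alternatively, and perhaps more transparently for a reader, one can give the proof directly, mirroring the argument for \lemref{P-iso} line by line: work in the derived category $\D{\cB}$, use that $\cB$ is a full subcategory of $\D{\cB}$ to get $\Hom{\cB}{\H{\ell}{Y}}{B} \cong \Hom{\D{\cB}}{\H{\ell}{Y}}{B}$, then use the natural isomorphisms $\H{\ell}{Y} \cong \upSigma^{-\ell}Y$ in $\D{\cB}$ (valid because the homology of $Y$ is concentrated in degree $\ell$) and $B \cong \Fu{I}(B)$ to obtain $\Hom{\D{\cB}}{\H{\ell}{Y}}{B} \cong \Hom{\D{\cB}}{\upSigma^{-\ell}Y}{\Fu{I}(B)}$, and finally invoke the dual of the standard fact that $\Hom{\D{\cB}}{X}{\Fu{I}(B)} \cong \Hom{\K{\cB}}{X}{\Fu{I}(B)}$ for any complex $X$, which holds because $\Fu{I}(B)$ is a bounded below complex of injectives. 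Composing these natural isomorphisms defines $v^\ell_{Y,B}$.

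For the final clause, about $\tau$ being an isomorphism, I would argue exactly as in \lemref{P-iso}: a morphism $f$ in $\D{\cB}$ between two complexes each with homology concentrated in a single degree is an isomorphism in $\D{\cB}$ if and only if $\H{\ell}{f}$ is an isomorphism in $\cB$; and a morphism $\upSigma^{-\ell}Y \to \Fu{I}(B)$ in $\K{\cB}$ is a quasi-isomorphism if and only if its image in $\D{\cB}$ is an isomorphism. Since the inverse of $v^\ell_{Y,B}$ is induced by $\H{0}{-} = \upSigma^\ell(-)$ followed by $\H{\ell}{-}$, tracing $\tau$ through the chain of isomorphisms identifies $\tau$ with $\H{\ell}{v^\ell_{Y,B}(\tau)}$, so one is an isomorphism precisely when the other is.

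The only point requiring any care is bookkeeping of the translation signs and the placement of the shift $\upSigma^{-\ell}$ when dualizing, together with the convention (fixed in \ref{P-I}) that $\Fu{I}$ lands in $\K{\cB}$ rather than in the derived category; none of this is a genuine obstacle, and the proof is otherwise entirely formal, which is why the authors simply remark that it is ``proved similarly.''
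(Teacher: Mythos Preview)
Your proposal is correct and matches the paper's approach: the paper gives no proof at all beyond the remark ``proved similarly,'' and your second paragraph does precisely that, mirroring the derived-category argument of \lemref{P-iso} line by line with injectives in place of projectives. The opposite-category reduction in your first paragraph is a legitimate alternative packaging of the same dualization.
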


\begin{ipg}
  \label{derived-functors}
  As in Mac~Lane~\cite[I\S2]{Mac}, a functor means a \textsl{covariant} functor. Let $\Fu{T} \colon \cA \to \cB$ be an additive (covariant) functor between abelian categories. Recall that if $\cA$ has enough projectives, then the \emph{$i^\mathrm{th}$ left derived functor} of $\Fu{T}$ is given by $\Ld{i}{\Fu{T}}(A) = \Hnop{i}\Fu{T}(P)$ where $P$ is any projective resolution of $A \in \cA$. If $\Fu{T}$ is right exact, then $\Ld{0}{\Fu{T}} = \Fu{T}$. Dually, if $\cA$~has~enough injectives, then the \emph{$i^\mathrm{th}$ right derived functor} of $\Fu{T}$ is given by $\Rd{i}{\Fu{T}}(A) = \Hnop{-i}\Fu{T}(I)$ where $I$ is any injective resolution of $A \in \cA$. And if $\Fu{T}$ is left exact, then $\Rd{0}{\Fu{T}} = \Fu{T}$.
  
  Consider now the opposite functor  $\Fu{T}^\mathrm{op} \colon \cA^\mathrm{op} \to \cB^\mathrm{op}$ of $\Fu{T}$. The category $\cA^\mathrm{op}$ has enough projectives (resp.~injectives) if and only if $\cA$ has enough injectives (resp.~projectives), and in this case one has $\Ld{i}{(\Fu{T}^\mathrm{op})} = (\Rd{i}{\Fu{T}})^\mathrm{op}$ (resp.~$\Rd{i}{(\Fu{T}^\mathrm{op})} = (\Ld{i}{\Fu{T}})^\mathrm{op}$).
\end{ipg}

If \mbox{$\Fu{S} \colon\mspace{-2mu} \cA \rightleftarrows \cB \colon \Fu{T}$} is an adjunction, where $\Fu{S}$ is the left adjoint of $\Fu{T}$, with unit $\eta \colon \Fu{Id}_{\cA} \to \Fu{T}\Fu{S}$ and counit $\varepsilon \colon \Fu{S}\Fu{T} \to \Fu{Id}_{\cB}$, then the composites
\smash{$\!\!\xymatrix@C=1.5pc{
   \Fu{S} 
   \ar[r]^-{\smash{\text{\raisebox{-0.2ex}{$\Fu{S}\eta$}}}} 
   & 
   \Fu{S}\Fu{T}\Fu{S}
   \ar[r]^-{\smash{\text{\raisebox{-0.3ex}{$\varepsilon\Fu{S}$}}}}  
   &
   \Fu{S}  
   }\!\!$}
and    
\smash{$\!\!\xymatrix@C=1.5pc{
   \Fu{T} 
   \ar[r]^-{\smash{\text{\raisebox{-0.2ex}{$\eta\Fu{T}$}}}} 
   & 
   \Fu{T}\Fu{S}\Fu{T}
   \ar[r]^-{\smash{\text{\raisebox{-0.3ex}{$\Fu{T}\varepsilon$}}}}  
   &
   \Fu{T}  
   }\!\!$}
are the identities on $\Fu{S}$ and $\Fu{T}$; see e.g.~\cite[IV\S1 Thm.~1]{Mac}. In the proof of   \thmref{equivalence} we will need the following sligthly more careful version of this fact.

\begin{lem}
  \label{lem:ST}
  Let \mbox{$\Fu{S} \colon\mspace{-2mu} \cA \rightleftarrows \cB \colon \Fu{T}$} be functors (not assumed to be an adjunction), let $\cA_0$ and $\cB_0$ be a full subcategories of $\cA$ and $\cB$, and assume that there is a natural bijection
\begin{displaymath}
  \xymatrix{
    \Hom{\cB}{\Fu{S}A}{B} \ar[r]^-{\smash{\text{\raisebox{0.2ex}{$k_{A,B}$}}}} &
    \Hom{\cA}{A}{\Fu{T}B}
  }
\end{displaymath}    
for $A \in \cA_0$ and $B \in \cB_0$. (We do not assume $\Fu{S}(\cA_0) \subseteq \cB_0$ and $\Fu{T}(\cB_0) \subseteq \cA_0$, so it is not given the functors $\Fu{S}$ and $\Fu{T}$ restrict to an adjunction $\cA_0 \rightleftarrows \cB_0$.) 

For every $A \in \cA_0$ which satisfies $\Fu{S}A \in \cB_0$ set $\eta_A = k_{A,\Fu{S}A}(1_{\Fu{S}A}) \colon A \to \Fu{T}\Fu{S}A$, and for every $B \in \cB_0$ which satisfies $\Fu{T}B \in \cA_0$ set $\varepsilon_B = k^{-1}_{\,\Fu{T}B,B}(1_{\Fu{T}B}) \colon \Fu{S}\Fu{T}B \to B$. The following hold:
\begin{prt}
\item If $A \in \cA$ is an object with $A, \Fu{T}\Fu{S}A \in \cA_0$ and $\Fu{S}A \in \cB_0$, then  \smash{$\!\!\xymatrix@C=1.7pc{
   \Fu{S}A 
   \ar[r]^-{\smash{\text{\raisebox{-0.1ex}{$\Fu{S}(\eta_A)$}}}} 
   & 
   \Fu{S}\Fu{T}\Fu{S}A
   \ar[r]^-{\smash{\text{\raisebox{-0.1ex}{$\varepsilon_{\Fu{S}A}$}}}}  
   &
   \Fu{S}A  
   }\!\!$} is the identity on $\,\Fu{S}A$.
   
\item If $B \in \cB$ is an object with $B, \Fu{S}\Fu{T}B \in \cB_0$ and $\Fu{T}B \in \cA_0$, then  
\smash{$\!\!\xymatrix@C=1.7pc{
   \Fu{T}B 
   \ar[r]^-{\smash{\text{\raisebox{-0.1ex}{$\eta_{\Fu{T}B}$}}}} 
   & 
   \Fu{T}\Fu{S}\Fu{T}B
   \ar[r]^-{\smash{\text{\raisebox{-0.1ex}{$\Fu{T}(\varepsilon_B)$}}}}  
   &
   \Fu{T}B  
   }\!\!$} is the identity on $\,\Fu{T}B$.
\end{prt}
\end{lem}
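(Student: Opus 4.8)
The plan is to run the classical zig\mbox{-}zag (triangle identity) argument that proves the analogous statement for a genuine adjunction, but to be careful that every naturality square I invoke really is one of the squares available to us. The three membership conditions listed in each of \prtlbl{a} and \prtlbl{b} are tailored precisely so that this works even though $\Fu{S}(\cA_0) \subseteq \cB_0$ and $\Fu{T}(\cB_0) \subseteq \cA_0$ are not assumed. Throughout I would use only the naturality of $k$ in a \emph{single} variable at a time, together with the bijectivity of each $k_{A,B}$.

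For \prtlbl{a} I would set $C = \Fu{S}A$, so that the hypotheses read $A, \Fu{T}C \in \cA_0$ and $C \in \cB_0$; thus $\eta_A = k_{A,C}(1_C) \colon A \to \Fu{T}C$ and $\varepsilon_C = k^{-1}_{\Fu{T}C,C}(1_{\Fu{T}C}) \colon \Fu{S}\Fu{T}C \to C$ are both defined. The composite $\varepsilon_C \circ \Fu{S}(\eta_A)$ lies in $\Hom{\cB}{\Fu{S}A}{C}$, a group to which $k_{A,C}$ applies since $A \in \cA_0$ and $C \in \cB_0$. Applying naturality of $k$ in the first variable along $\eta_A \colon A \to \Fu{T}C$ --- legitimate because $A, \Fu{T}C \in \cA_0$ and $C \in \cB_0$ --- to the element $\varepsilon_C \in \Hom{\cB}{\Fu{S}\Fu{T}C}{C}$ would give
\[ k_{A,C}\bigl(\varepsilon_C \circ \Fu{S}(\eta_A)\bigr) \,=\, k_{\Fu{T}C,C}(\varepsilon_C) \circ \eta_A \,=\, 1_{\Fu{T}C} \circ \eta_A \,=\, \eta_A \,=\, k_{A,C}(1_C), \]
whence $\varepsilon_C \circ \Fu{S}(\eta_A) = 1_C = 1_{\Fu{S}A}$ by injectivity of $k_{A,C}$.

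Part \prtlbl{b} I would handle dually: set $D = \Fu{T}B$, so that the hypotheses read $D \in \cA_0$ and $\Fu{S}D, B \in \cB_0$, and both $\eta_D = k_{D,\Fu{S}D}(1_{\Fu{S}D})$ and $\varepsilon_B = k^{-1}_{\Fu{T}B,B}(1_{\Fu{T}B}) = k^{-1}_{D,B}(1_D)$ are defined. I would apply $k^{-1}_{D,B}$ (available since $D \in \cA_0$ and $B \in \cB_0$) to $\Fu{T}(\varepsilon_B) \circ \eta_D \colon D \to \Fu{T}B$ and use naturality of $k$ in the second variable along $\varepsilon_B \colon \Fu{S}D \to B$ --- legitimate because $D \in \cA_0$ and $\Fu{S}D, B \in \cB_0$ --- evaluated on $\eta_D$, to obtain
\[ k^{-1}_{D,B}\bigl(\Fu{T}(\varepsilon_B) \circ \eta_D\bigr) \,=\, \varepsilon_B \circ k^{-1}_{D,\Fu{S}D}(\eta_D) \,=\, \varepsilon_B \circ 1_{\Fu{S}D} \,=\, \varepsilon_B \,=\, k^{-1}_{D,B}(1_D); \]
injectivity of $k_{D,B}$ then yields $\Fu{T}(\varepsilon_B) \circ \eta_{\Fu{T}B} = 1_D = 1_{\Fu{T}B}$.

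I expect no genuine difficulty here beyond the bookkeeping just indicated, which is also the reason the statement lists three membership conditions in each part instead of silently assuming $\Fu{S}(\cA_0) \subseteq \cB_0$ and $\Fu{T}(\cB_0) \subseteq \cA_0$: at every step one must confirm that the objects serving as source and target of the morphism along which naturality is applied, and the objects at which $k$ or $k^{-1}$ is evaluated, all lie in $\cA_0$, resp.\ $\cB_0$. Once the correct naturality square --- in the correct variable --- is written out explicitly, each of the two identities is immediate.
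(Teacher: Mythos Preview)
Your proof is correct and is exactly the argument the paper has in mind: the paper's proof consists of the single sentence ``Inspect the proof of \cite[IV\S1 Thm.~1]{Mac},'' and what you have written is precisely that inspection, carried out with the required bookkeeping to ensure every instance of naturality you invoke stays within the available domain $\cA_0 \times \cB_0$.
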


\begin{proof}
  Inspect the proof of \cite[IV\S1 Thm.~1]{Mac}.
\end{proof}

\enlargethispage{2ex}

\section{Fixed and cofixed objects}
\label{sec:Fixed}

In this section, we prove our main result, \thmref{equivalence}, which in certain situations takes the simpler form of \thmref{equivalence-simplified}.

\begin{stp}
   \label{stp:setup}
   Throughout, $\cA$ is an abelian category with enough projectives and $\cB$ is an abe\-lian category with enough injectives. Furthermore, \mbox{$\Fu{F} \colon\mspace{-2mu} \cA \rightleftarrows \cB \colon \Fu{G}$} is an adjunction with $\Fu{F}$ being left adjoint of $\Fu{G}$. We write $h_{A,B} \colon \Hom{\cB}{\Fu{F}A}{B} \to \Hom{\cA}{A}{\Fu{G}B}$ for the given na\-tu\-ral iso\-mor\-phism and denote by $\eta_A \colon A \to \Fu{G}\Fu{F}A$ and  $\varepsilon_B \colon \Fu{F}\Fu{G}B \to B$ the unit and counit.
\end{stp}

The following examples of \stpref{setup} are useful to have in mind.

\begin{exa}
  \label{exa:2}
  Let $\Gamma$ and $\Lambda$ be rings and let $T = {}_\Gamma T_{\!\Lambda}$ be a $(\Gamma,\Lambda)$-bimodule. The functors
\begin{equation*} 
  \xymatrix@C=5.5pc{
    \Mod{\Lambda}
    \ar@<0.6ex>[r]^-{\Fu{F}\mspace{3mu}=\mspace{3mu}T \otimes_\Lambda-} 
    & 
    \Mod{\Gamma}
    \ar@<0.6ex>[l]^-{\Fu{G}\mspace{3mu}=\mspace{3mu}\Hom{\Gamma}{T}{-}}    
  }
\end{equation*} 
constitute an adjunction with unit and counit:
\begin{eqnarray*}
  \eta_A \colon A \longrightarrow \Hom{\Gamma}{T}{T \otimes_\Lambda A}
  &\text{ given by }& \eta_A(a)(t) = t \otimes a \quad \text{and}
  \\
  \varepsilon_B \colon T \otimes_\Lambda\Hom{\Gamma}{T}{B} \longrightarrow B
  &\text{ given by }& \varepsilon_B(t \otimes \beta) = \beta(t)\,.
\end{eqnarray*}
If $\Gamma$ and $\Lambda$ are artin algebras and the modules ${}_\Gamma T$ and $T_{\!\Lambda}$ are finitely generated, then the above restricts to an adjunction between the subcategories of finitely generated modules:
\begin{equation*} 
  \xymatrix@C=5.5pc{
    \mod{\Lambda}
    \ar@<0.6ex>[r]^-{\Fu{F}\mspace{3mu}=\mspace{3mu}T \otimes_\Lambda-} 
    & 
    \mod{\Gamma}
    \ar@<0.6ex>[l]^-{\Fu{G}\mspace{3mu}=\mspace{3mu}\Hom{\Gamma}{T}{-}}    
  }\!.
\end{equation*} 
In this case the category $\mod{\Lambda}$ has enough projectives and $\mod{\Gamma}$ has enough injectives, see e.g. \cite[II.3 Cor.~3.4]{rta}, so the situation satisfies \stpref{setup}.

Finally, we note that $\Ld{i}{\Fu{F}} = \Tor{\Lambda}{i}{T}{-}$ and $\Rd{i}{\Fu{G}} = \Ext{\Gamma}{i}{T}{-}$.
\end{exa}

\enlargethispage{1.3ex}

For a ring $\Lambda$ we write $\Lo$ for the opposite ring.

\begin{exa}
  \label{exa:1}
  Let $\Gamma$ and $\Lambda$ be rings and let $T = {}_\Gamma T_{\!\Lambda}$ be a $(\Gamma,\Lambda)$-bimodule. The functors
\begin{displaymath} 
  \xymatrix@C=6pc{
    \Mod{\Gamma}
    \ar@<0.6ex>[r]^-{\Fu{F}\mspace{3mu}=\mspace{3mu}\Hom{\Gamma}{-}{T}^\mathrm{op}} 
    & 
    \Mod{\Lo}^\mathrm{op} 
    \ar@<0.6ex>[l]^-{\Fu{G}\mspace{3mu}=\mspace{3mu}\Hom{\Lo}{-}{T}}    
  }
\end{displaymath}
constitute an adjunction whose unit and counit are the so-called biduality homomorphisms:
\begin{eqnarray*}
  \eta_A \colon A \longrightarrow \Hom{\Lo}{\Hom{\Gamma}{A}{T}}{T}
  &\text{ given by }& \eta_A(a)(\alpha) = \alpha(a) \quad \text{and}
  \\
  \varepsilon_B \colon B \longrightarrow \Hom{\Gamma}{\Hom{\Lo}{B}{T}}{T}
  &\text{ given by }& \varepsilon_B(b)(\beta) = \beta(b)\,.
\end{eqnarray*}
(Note that \emph{a priori} the counit is a mor\-phism $\Fu{F}\Fu{G}B \to B$ in $\Mod{\Lo}^\mathrm{op}$, but that corresponds to the morphism $B \to \Fu{F}\Fu{G}B$ in $\Mod{\Lo}$ displayed above.)

If $\Gamma$ is left coherent and $\Lambda$ is right coherent, then the categories $\mod{\Gamma}$ and $\mod{\Lo}$ of finitely presented $\Gamma$- and $\Lo$-modules are abelian with enough projectives (and hence the category $\mod{\Lo}^\mathrm{op}$ is abelian with enough injectives). In this case, and if the modules ${}_\Gamma T$ and $T_{\!\Lambda}$ are finitely presented, the above restricts to an adjunction:
\begin{displaymath} 
  \xymatrix@C=6pc{
    \mod{\Gamma}
    \ar@<0.6ex>[r]^-{\Fu{F}\mspace{3mu}=\mspace{3mu}\Hom{\Gamma}{-}{T}^\mathrm{op}} 
    & 
    \mod{\Lo}^\mathrm{op}
    \ar@<0.6ex>[l]^-{\Fu{G}\mspace{3mu}=\mspace{3mu}\Hom{\Lo}{-}{T}}    
  }\!.
\end{displaymath}

Finally, we note that $\Ld{i}{\Fu{F}} = \Ext{\Gamma}{i}{-}{T}^\mathrm{op}$ and $\Rd{i}{\Fu{G}} = \Ext{\Lo}{i}{-}{T}$ by \ref{derived-functors}.
\end{exa}

\begin{prp}
  \label{prp:key}
  Let $\ell$ be an integer. For $A \in \cA$ that satisfies $\Ld{i}{\Fu{F}}(A)=0$ for all $i \neq \ell$, and for $B \in \cB$ that satisfies $\Rd{i}{\Fu{G}}(B)=0$ for all $i \neq \ell$, there is a natural isomorphism:
\begin{displaymath}
  \xymatrix{
    \Hom{\cB}{\Ld{\ell}{\Fu{F}}(A)}{B}
    \ar[r]^-{\smash{\text{\raisebox{0.5ex}{$h^\ell_{A,B}$}}}}_-{\cong}         
    &
    \Hom{\cA}{A}{\Rd{\ell}{\Fu{G}}(B)}
  }\!.
\end{displaymath}  
\end{prp}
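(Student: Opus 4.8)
The plan is to obtain the isomorphism $h^\ell_{A,B}$ by transporting the adjunction isomorphism $h_{A,B}$ through the derived-category descriptions of the relevant Hom-groups, using Lemmas~\ref{lem:P-iso} and \ref{lem:I-iso} as the bridge. Concretely, pick a projective resolution $\Fu{P}(A)$ of $A$ and an injective resolution $\Fu{I}(B)$ of $B$. Since $\Ld{i}{\Fu{F}}(A) = \H{i}{\Fu{F}\Fu{P}(A)}$ vanishes for $i \neq \ell$, the complex $X := \Fu{F}\Fu{P}(A)$ has homology concentrated in degree $\ell$, with $\H{\ell}{X} = \Ld{\ell}{\Fu{F}}(A)$. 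Similarly $Y := \Fu{G}\Fu{I}(B)$ has homology concentrated in degree $\ell$ with $\H{\ell}{Y} = \Rd{\ell}{\Fu{G}}(B)$.

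The key steps, in order, are as follows. First, apply \lemref{I-iso} to $\cB$, the object $B$, and the complex $X = \Fu{F}\Fu{P}(A)$ (which lives in $\cB$ and has homology concentrated in degree $\ell$): this gives a natural isomorphism
\begin{displaymath}
  \Hom{\cB}{\Ld{\ell}{\Fu{F}}(A)}{B} \;=\; \Hom{\cB}{\H{\ell}{X}}{B} \;\xrightarrow{\ \cong\ }\; \Hom{\K{\cB}}{\upSigma^{-\ell}X}{\Fu{I}(B)} \;=\; \Hom{\K{\cB}}{\upSigma^{-\ell}\Fu{F}\Fu{P}(A)}{\Fu{I}(B)}\,.
\end{displaymath}
Second, apply \lemref{P-iso} to $\cA$, the object $A$, and the complex $Y = \Fu{G}\Fu{I}(B)$ (which lives in $\cA$ and has homology concentrated in degree $\ell$): this gives a natural isomorphism
\begin{displaymath}
  \Hom{\cA}{A}{\Rd{\ell}{\Fu{G}}(B)} \;=\; \Hom{\cA}{A}{\H{\ell}{Y}} \;\xrightarrow{\ \cong\ }\; \Hom{\K{\cA}}{\Fu{P}(A)}{\upSigma^{-\ell}Y} \;=\; \Hom{\K{\cA}}{\Fu{P}(A)}{\upSigma^{-\ell}\Fu{G}\Fu{I}(B)}\,.
\end{displaymath}
Third, and this is the heart of the argument, I must produce a natural isomorphism
\begin{displaymath}
  \Hom{\K{\cB}}{\upSigma^{-\ell}\Fu{F}\Fu{P}(A)}{\Fu{I}(B)} \;\cong\; \Hom{\K{\cA}}{\Fu{P}(A)}{\upSigma^{-\ell}\Fu{G}\Fu{I}(B)}\,.
\end{displaymath}
This comes from the adjunction $(\Fu{F},\Fu{G})$ itself: the functors $\Fu{F}$ and $\Fu{G}$ are additive, hence extend to functors on homotopy categories, and the adjunction isomorphism $h$ is natural and additive, so it passes to chain complexes and to chain-homotopy classes, giving $\Hom{\K{\cB}}{\Fu{F}C}{D} \cong \Hom{\K{\cA}}{C}{\Fu{G}D}$ naturally for all complexes $C$ over $\cA$ and $D$ over $\cB$; one then commutes the shift $\upSigma^{-\ell}$ past $\Fu{F}$ (the translate of $\Fu{F}\Fu{P}(A)$ is $\Fu{F}$ of the translate, up to the standard sign which is harmless at the level of Hom-groups) and applies this with $C = \upSigma^{-\ell}\Fu{P}(A)$ and $D = \Fu{I}(B)$, using that $\upSigma^{-\ell}$ is an autoequivalence of $\K{\cA}$ and $\K{\cB}$. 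Composing the three isomorphisms yields $h^\ell_{A,B}$, and naturality in $A$ and $B$ follows since every isomorphism in the composite is natural (the two from \lemref{P-iso}/\lemref{I-iso} by their statements, the middle one by naturality of $h$ and functoriality of $\upSigma$, $\Fu{F}$, $\Fu{G}$, $\Fu{P}$, $\Fu{I}$).

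The main obstacle I anticipate is the bookkeeping in the third step: checking that the adjunction isomorphism really does lift cleanly from $\Hom$ in the module/object categories to $\Hom$ in the homotopy categories (i.e.\ that it is compatible with chain homotopies, not merely with chain maps), and keeping track of the translate and its sign so that the identifications $\upSigma^{-\ell}\Fu{F}\Fu{P}(A) \cong \Fu{F}(\upSigma^{-\ell}\Fu{P}(A))$ in $\K{\cB}$ are genuinely natural. None of this is deep—additive functors preserve chain homotopies and the signs in $\upSigma^\ell$ are chosen precisely so that the obvious maps are chain maps—but it is the part where one has to be careful. An alternative, perhaps cleaner, route is to run the entire argument inside the derived category $\D{\cA}$, $\D{\cB}$ (as in the proof of \lemref{P-iso}): there $A \cong \Fu{P}(A)$, $B \cong \Fu{I}(B)$, $\Ld{\ell}{\Fu{F}}(A) \cong \upSigma^{\ell}(T \text{-style derived functor applied to } A)$ and the derived adjunction $(\mathbf{L}\Fu{F}, \mathbf{R}\Fu{G})$ supplies the middle isomorphism directly; I would then only need the hypotheses on vanishing of $\Ld{i}{\Fu{F}}(A)$ and $\Rd{i}{\Fu{G}}(B)$ to collapse the derived objects back to their single nonzero homology modules, which is exactly what Lemmas~\ref{lem:P-iso} and \ref{lem:I-iso} encode.
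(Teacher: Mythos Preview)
Your approach is essentially identical to the paper's: the paper defines $h^\ell_{A,B}$ by exactly the chain of isomorphisms you describe (Lemma~\ref{lem:I-iso}, a shift, the adjunction lifted to the homotopy category, and Lemma~\ref{lem:P-iso}), and naturality is argued the same way. One bookkeeping correction: under the paper's convention $\Rd{i}{\Fu{G}}(B)=\H{-i}{\Fu{G}\Fu{I}(B)}$ (see \ref{derived-functors}), the homology of $Y=\Fu{G}\Fu{I}(B)$ is concentrated in degree $-\ell$, not $\ell$, so Lemma~\ref{lem:P-iso} must be applied with parameter $-\ell$ and the target in steps two and three is $\Hom{\K{\cA}}{\Fu{P}(A)}{\upSigma^{\ell}\Fu{G}\Fu{I}(B)}$ rather than $\upSigma^{-\ell}$; once this sign is fixed, your three steps match the paper's diagram exactly.
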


\begin{proof}
  The assumptions mean that the homology of the complex $\Fu{F}(\Fu{P}(A))$ is concentrated in degree $\ell$ and that the homology of $\Fu{G}(\Fu{I}(B))$ is concentrated in degree $-\ell$. We now define $h^\ell_{A,B}$ to be the unique homomorphism (which is forced to be an isomorphism) that makes the following diagram commutative:
\begin{equation}
  \label{eq:big}
  \begin{gathered}
  \xymatrix@R=1.5pc@C=4pc{
    \Hom{\cB}{\Ld{\ell}{\Fu{F}}(A)}{B} \ar@{..>}[r]^-{h^\ell_{A,B}}
    \ar@{=}[d]
    &
    \Hom{\cA}{A}{\Rd{\ell}{\Fu{G}}(B)}
    \ar@{=}[d]
    \\
    \Hom{\cB}{\Hnop{\ell}{\Fu{F}(\Fu{P}(A))}}{B}
    \ar[d]^-{\cong}_-{v^\ell_{\Fu{F}(\Fu{P}(A)),B}}
    & 
    \Hom{\cA}{A}{\Hnop{-\ell}{\Fu{G}(\Fu{I}(B))}}
    \ar[d]_-{\cong}^-{u^{-\ell}_{A,\Fu{G}(\Fu{I}(B))}}    
    \\
    \Hom{\K{\cB}}{\upSigma^{-\ell}\Fu{F}(\Fu{P}(A))}{\Fu{I}(B)}
    \ar[d]^-{\cong}_-{\upSigma^{\ell}(-)}        
    &
    \Hom{\K{\cA}}{\Fu{P}(A)}{\upSigma^{\ell}\Fu{G}(\Fu{I}(B))}
    \ar@{=}[d]    
    \\
    \Hom{\K{\cB}}{\Fu{F}(\Fu{P}(A))}{\upSigma^{\ell}\Fu{I}(B)}
    \ar[r]^-{\mathrm{adjunction}}_-{\cong}       
    &
    \Hom{\K{\cA}}{\Fu{P}(A)}{\Fu{G}(\upSigma^{\ell}\Fu{I}(B))}\,.
    }
  \end{gathered}    
\end{equation}    
The vertical isomorphisms come from \lemref[Lemmas~]{P-iso} and \lemref[]{I-iso}. The adjunction \mbox{$\Fu{F} \colon\mspace{-2mu} \cA \rightleftarrows \cB \colon \Fu{G}$} induces an adjunction $\K{\cA} \rightleftarrows \K{\cB}$ by degreewise application of the functors $\Fu{F}$ and $\Fu{G}$; this explains the lower vertical isomorphism in the diagram. Finally, we note that all the displayed isomorphisms are natural~in~$A$~and~$B$.
\end{proof}

\begin{dfn}
  \label{dfn:unit-counit}
  Let $\ell$ be an integer. If $A \in \cA$ satisfies $\Ld{i}{\Fu{F}}(A)=0=\Rd{i}{\Fu{G}}(\Ld{\ell}{\Fu{F}}(A))$ for all $i \neq \ell$, then we can apply \prpref{key} to $B=\Ld{\ell}{\Fu{F}}(A)$, and thereby obtain a morphism:
\begin{displaymath}
  \eta^\ell_A \colon A \longrightarrow \Rd{\ell}{\Fu{G}}(\Ld{\ell}{\Fu{F}}(A))
  \quad \text{defined by} \quad  \eta^\ell_A=h^\ell_{A,\Ld{\ell}{\Fu{F}}(A)}(1_{\Ld{\ell}{\Fu{F}}(A)})\;.
\end{displaymath}  
Similarly, if $B \in \cB$ has $\Rd{i}{\Fu{G}}(B)=0=\Ld{i}{\Fu{F}}(\Rd{\ell}{\Fu{G}}(B))$ for all $i \neq \ell$, then we get a morphism
\begin{displaymath}
  \varepsilon^\ell_B \colon \Ld{\ell}{\Fu{F}}(\Rd{\ell}{\Fu{G}}(B)) \longrightarrow B
  \quad \text{defined by} \quad  \varepsilon^\ell_B=(h^\ell_{\Rd{\ell}{\Fu{G}}(B),B})^{-1}(1_{\Rd{\ell}{\Fu{G}}(B)})\;.
\end{displaymath}   
\end{dfn}

\begin{rmk}
The proofs of \lemref[Lemmas~]{P-iso} and \lemref[]{I-iso} show how the maps \smash{$u^\ell_{A,X}$} and \smash{$v^\ell_{Y,B}$} act, and the diagram \eqref{big} shows how \smash{$h^\ell_{A,B}$} is a composition of these maps and the given adjunction. This tells us how \smash{$h^\ell_{A,B}$} acts. It can verified that for $\ell=0$ the isomorphism \smash{$h^\ell_{A,B}=h^0_{A,B}$} co\-in\-cides with the given natural isomorphism $h_{A,B}$ from \stpref{setup}, and hence \smash{$\eta^0_A$} and \smash{$\varepsilon^0_B$} from \dfnref{unit-counit} coincide with the unit $\eta_A$ and the counit $\varepsilon_B$ of the adjunction $(\Fu{F},\Fu{G})$.
%  The diagram \eqref{big} together with \lemref[Lemmas~]{P-iso} and \lemref[]{I-iso} show how the map \smash{$h^\ell_{A,B}$} acts. It is easily verified that for $\ell=0$ the isomorphism \smash{$h^\ell_{A,B}=h^0_{A,B}$} coincides with the given natural isomorphism $h_{A,B}$ from \stpref{setup}, and consequently \smash{$\eta^0_A$} and \smash{$\varepsilon^0_B$} from \dfnref{unit-counit} coincide with the unit $\eta_A$ and the counit $\varepsilon_B$ of the adjunction $(\Fu{F},\Fu{G})$.
\end{rmk}

The following is the key definition in this paper.

\begin{dfn}
 \label{dfn:Fix-Cofix}
 Let $\ell$ be an integer. An object $A \in \cA$ is called \emph{$\ell$-fixed} with respect to the adjunc\-tion $(\Fu{F},\Fu{G})$ if it satisfies the following three conditions:
\begin{prt}
  \item[\textnormal{(i)}] $\Ld{i}{\Fu{F}}(A)=0$ for all $i \neq \ell$.
  \item[\textnormal{(ii)}] $\Rd{i}{\Fu{G}}(\Ld{\ell}{\Fu{F}}(A))=0$ for all $i \neq \ell$.
  \item[\textnormal{(iii)}] The morphism $\eta^\ell_A \colon A \to \Rd{\ell}{\Fu{G}}(\Ld{\ell}{\Fu{F}}(A))$ is an isomorphism.
\end{prt}
The full subcategory of $\cA$ whose objects are the $\ell$-fixed ones is denoted by $\Fix[]{\ell}{\cA}$.

Dually, an object $B \in \cB$ is \emph{$\ell$-cofixed} with respect to $(\Fu{F},\Fu{G})$ if it satisfies:
\begin{prt}
  \item[\textnormal{(i$'$)}] $\Rd{i}{\Fu{G}}(B)=0$ for all $i \neq \ell$.
  \item[\textnormal{(ii$'$)}] $\Ld{i}{\Fu{F}}(\Rd{\ell}{\Fu{G}}(B))=0$ for all $i \neq \ell$.
  \item[\textnormal{(iii$'$)}] The morphism $\varepsilon^\ell_B \colon \Ld{\ell}{\Fu{F}}(\Rd{\ell}{\Fu{G}}(B)) \to B$ is an isomorphism.
\end{prt}
The full subcategory of $\cB$ whose objects are the $\ell$-cofixed ones is denoted by $\coFix[]{\ell}{\cB}$.
\end{dfn}

The categories of $\ell$-fixed objects in $\cA$ and $\ell$-cofixed objects in $\cB$ are, in fact, equivalent:

\begin{thm}
  \label{thm:equivalence}
  In the notation from \stpref{setup} and \dfnref{Fix-Cofix} there is for every integer~$\ell$ an adjoint equivalence of categories:
\begin{displaymath}
  \xymatrix@C=3pc{
    \Fix[]{\ell}{\cA}\, \ar@<0.6ex>[r]^-{\Ld{\ell}{\Fu{F}}} & \,\coFix[]{\ell}{\cB} \ar@<0.6ex>[l]^-{\Rd{\ell}{\Fu{G}}}
  }\!.
\end{displaymath}  
\end{thm}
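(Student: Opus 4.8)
The plan is to recognize this as an instance of \lemref{ST}. Apply that lemma with $\Fu{S} = \Ld{\ell}{\Fu{F}}$, $\Fu{T} = \Rd{\ell}{\Fu{G}}$, with $\cA_0$ the full subcategory of $\cA$ consisting of the objects $A$ having $\Ld{i}{\Fu{F}}(A)=0$ for $i \neq \ell$ and $\cB_0$ the full subcategory of $\cB$ consisting of the objects $B$ having $\Rd{i}{\Fu{G}}(B)=0$ for $i \neq \ell$, and with the natural bijection $k_{A,B}$ taken to be the isomorphism $h^\ell_{A,B}$ furnished by \prpref{key}. With these choices the morphisms $\eta_A$ and $\varepsilon_B$ produced in \lemref{ST} are precisely $\eta^\ell_A$ and $\varepsilon^\ell_B$ from \dfnref{unit-counit}; note also that $\Fix[]{\ell}{\cA} \subseteq \cA_0$ and $\coFix[]{\ell}{\cB} \subseteq \cB_0$ by conditions (i) and (i$'$) of \dfnref{Fix-Cofix}.

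The first substantive step is to show that the two functors restrict to the subcategories in question. Let $A \in \Fix[]{\ell}{\cA}$ and set $B = \Ld{\ell}{\Fu{F}}(A)$. Condition (i$'$) for $B$ is condition (ii) for $A$; and since $\eta^\ell_A \colon A \to \Rd{\ell}{\Fu{G}}(B)$ is an isomorphism by (iii), applying $\Ld{i}{\Fu{F}}$ to it shows both that condition (ii$'$) holds for $B$ and that $\Rd{\ell}{\Fu{G}}(B) = \Fu{T}\Fu{S}A$ lies in $\cA_0$. Hence $A$ meets the hypotheses of \lemref{ST}(a), which gives $\varepsilon^\ell_B \circ \Ld{\ell}{\Fu{F}}(\eta^\ell_A) = 1_B$; as $\Ld{\ell}{\Fu{F}}(\eta^\ell_A)$ is an isomorphism, so is $\varepsilon^\ell_B$, and this is condition (iii$'$). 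Thus $B \in \coFix[]{\ell}{\cB}$, and the dual argument, using \lemref{ST}(b), shows that $\Rd{\ell}{\Fu{G}}$ maps $\coFix[]{\ell}{\cB}$ into $\Fix[]{\ell}{\cA}$.

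It then remains to assemble the adjoint equivalence formally. The families $\eta^\ell$ and $\varepsilon^\ell$ are natural in $A$ and $B$ --- a consequence of the naturality of $h^\ell_{A,B}$ in both variables, by the same computation that shows the unit and counit of an ordinary adjunction are natural --- so, restricted to $\Fix[]{\ell}{\cA}$ and $\coFix[]{\ell}{\cB}$, they define natural transformations $\eta^\ell \colon \Fu{Id} \to \Rd{\ell}{\Fu{G}}\Ld{\ell}{\Fu{F}}$ and $\varepsilon^\ell \colon \Ld{\ell}{\Fu{F}}\Rd{\ell}{\Fu{G}} \to \Fu{Id}$. By the verifications above, \lemref{ST}(a) applies to every object of $\Fix[]{\ell}{\cA}$ and \lemref{ST}(b) to every object of $\coFix[]{\ell}{\cB}$, which says exactly that the two triangle identities hold; hence $(\Ld{\ell}{\Fu{F}}, \Rd{\ell}{\Fu{G}})$ is an adjunction between the two categories. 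Finally $\eta^\ell_A$ is an isomorphism for each $A \in \Fix[]{\ell}{\cA}$ by (iii), and $\varepsilon^\ell_B$ is an isomorphism for each $B \in \coFix[]{\ell}{\cB}$ by (iii$'$), and an adjunction whose unit and counit are isomorphisms is an adjoint equivalence.

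The step requiring the most care is the restriction of the functors: conditions (i$'$) and (ii$'$) for $\Ld{\ell}{\Fu{F}}(A)$ fall out easily once one uses that $\eta^\ell_A$ is invertible, but condition (iii$'$) is the genuine point, and it is exactly there that the zig-zag identity of \lemref{ST} is needed. One must also keep in mind that $\varepsilon^\ell$ and $\eta^\ell$ are defined only on objects whose relevant derived-functor groups vanish away from degree $\ell$, so part of the bookkeeping is checking that the iterated images $\Rd{\ell}{\Fu{G}}\Ld{\ell}{\Fu{F}}(A)$ and $\Ld{\ell}{\Fu{F}}\Rd{\ell}{\Fu{G}}(B)$ remain in $\cA_0$, resp.~$\cB_0$; once that is in place, the rest is purely formal.
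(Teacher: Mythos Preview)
Your proof is correct and follows essentially the same route as the paper: both set up \lemref{ST} with $\Fu{S}=\Ld{\ell}{\Fu{F}}$, $\Fu{T}=\Rd{\ell}{\Fu{G}}$, the subcategories $\cA_0,\cB_0$ determined by conditions (i) and (i$'$), and $k_{A,B}=h^\ell_{A,B}$ from \prpref{key}; both then verify that $\Ld{\ell}{\Fu{F}}$ lands in $\coFix[]{\ell}{\cB}$ by deducing (i$'$), (ii$'$) from (ii), (iii), (i) and then extracting (iii$'$) from the triangle identity $\varepsilon^\ell_B\circ\Ld{\ell}{\Fu{F}}(\eta^\ell_A)=1_B$. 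Your write-up is slightly more explicit about naturality and the bookkeeping needed to stay inside $\cA_0$ and $\cB_0$, but the argument is the same.
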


\begin{proof}
Let $\cA_0$, respectively, $\cB_0$, be the full subcategory of $\cA$, respectively, $\cB$, whose~\mbox{objects} satisfy condition (i), respectively, (i$'$), in \dfnref{Fix-Cofix}. 
By \prpref{key} we may apply \lemref{ST} to these choices of $\cA_0$ and $\cB_0$ and to $\Fu{S}=\Ld{\ell}{\Fu{F}}$ and $\Fu{T}=\Rd{\ell}{\Fu{G}}$. From part (a) of that lemma (and from \dfnref{unit-counit}) we conclude that if $A \in \cA$ satisfies the conditions
\begin{prt}
\setlength{\itemsep}{0.25ex}
\item[($1^\circ$)] $A \in \cA_0$, that is, $A$ satisfies \dfnref[]{Fix-Cofix}(i),
\item[($2^\circ$)] $\Fu{S}A \in \cB_0$, that is, $A$ satisfies \dfnref[]{Fix-Cofix}(ii), and 
\item[($3^\circ$)] $\Fu{T}\Fu{S}A \in \cA_0$, that is, $B=\Ld{\ell}{\Fu{F}}(A)$ satisfies \dfnref[]{Fix-Cofix}(ii$'$),
\end{prt}
then one has \smash{$\varepsilon^\ell_{\Ld{\ell}{\Fu{F}}(A)} \circ \Ld{\ell}{\Fu{F}}(\eta^\ell_A) = 1_{\Ld{\ell}{\Fu{F}}(A)}$}. We now see that the functor $\Ld{\ell}{\Fu{F}}$ maps $\Fix[]{\ell}{\cA}$~to $\coFix[]{\ell}{\cB}$, indeed, if $A$ belongs to $\Fix[]{\ell}{\cA}$, then $B:=\Ld{\ell}{\Fu{F}}(A)$ satisfies (i$'$) as $A$ satisfies (ii), and $B$ satisfies (ii$'$) since $A$ satisfies (iii) and (i). In particular, conditions \mbox{($1^\circ$)--($3^\circ$)} above hold, and hence \smash{$\varepsilon^\ell_{B} \circ \Ld{\ell}{\Fu{F}}(\eta^\ell_A) = 1_{\!B}$}. Since $\eta^\ell_A$ is an isomorphism by (iii), it follows that $\varepsilon^\ell_{B}$ is an isomorphism as well, that is, $B$ satisfies condition (iii$'$).

Similar arguments show that the functor $\Rd{\ell}{\Fu{G}}$ maps $\coFix[]{\ell}{\cB}$ to $\Fix[]{\ell}{\cA}$. Now \prpref{key} and \dfnref{unit-counit} show that $(\Ld{\ell}{\Fu{F}},\Rd{\ell}{\Fu{G}})$ gives an adjunction between the categories $\Fix[]{\ell}{\cA}$ to $\coFix[]{\ell}{\cB}$ with unit $\eta^\ell$ and counit~$\varepsilon^\ell$. Finally, conditions \dfnref[]{Fix-Cofix}(iii) and (iii$'$) show that $(\Ld{\ell}{\Fu{F}},\Rd{\ell}{\Fu{G}})$ yields an adjoint equivalence between $\Fix[]{\ell}{\cA}$ and $\coFix[]{\ell}{\cB}$.
\end{proof}

\begin{lem}
  \label{lem:extension}
  The categories $\Fix[]{\ell}{\cA}$ and $\coFix[]{\ell}{\cB}$ are closed under direct summands and extensions in $\cA$ and $\cB$, respectively.
\end{lem}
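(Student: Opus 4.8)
The plan is to check the three defining conditions (i)--(iii) of \dfnref{Fix-Cofix} in each case. I would reduce the $\coFix$ statements to the $\Fix$ statements by passing to opposite categories: the pair $(\Fu{G}^{\mathrm{op}},\Fu{F}^{\mathrm{op}}) \colon \cB^{\mathrm{op}} \rightleftarrows \cA^{\mathrm{op}}$ is again an adjunction satisfying \stpref{setup} (see \ref{derived-functors}), and one checks that under it $\coFix[]{\ell}{\cB}$ becomes $\Fix[]{\ell}{\cB^{\mathrm{op}}}$, while direct summands in $\cB$ correspond to direct summands in $\cB^{\mathrm{op}}$ and short exact sequences in $\cB$ to short exact sequences in $\cB^{\mathrm{op}}$; so it suffices to treat $\Fix[]{\ell}{\cA}$. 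The one preliminary fact I would establish is that $\eta^\ell$ is \emph{natural} --- and additive --- on the full subcategory $\cC$ of $\cA$ consisting of the objects satisfying conditions (i) and (ii) of \dfnref{Fix-Cofix}; that is, for a morphism $f \colon A_1 \to A_2$ in $\cC$ one has $\Rd{\ell}{\Fu{G}}(\Ld{\ell}{\Fu{F}}(f)) \circ \eta^\ell_{A_1} = \eta^\ell_{A_2} \circ f$. This is a formal consequence of the naturality of $h^\ell_{A,B}$ in both variables (\prpref{key}) together with the definition $\eta^\ell_A = h^\ell_{A,\Ld{\ell}{\Fu{F}}(A)}(1)$.

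For \textbf{direct summands}, let $A = A_1 \oplus A_2$ lie in $\Fix[]{\ell}{\cA}$. Since each $\Ld{i}{\Fu{F}}$ is additive, $\Ld{i}{\Fu{F}}(A) \cong \Ld{i}{\Fu{F}}(A_1) \oplus \Ld{i}{\Fu{F}}(A_2)$, so condition (i) for $A$ forces (i) for $A_1$ and $A_2$; applying $\Rd{i}{\Fu{G}}$ and using additivity again, (ii) for $A$ forces (ii) for $A_1$ and $A_2$. Thus $A_1, A_2 \in \cC$, and by the preliminary fact $\eta^\ell_A$ corresponds, under the canonical identifications, to the block-diagonal morphism $\eta^\ell_{A_1} \oplus \eta^\ell_{A_2}$; a block-diagonal morphism between biproducts is an isomorphism if and only if each diagonal entry is, so condition (iii) for $A$ yields (iii) for $A_1$ and $A_2$. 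Hence $A_1, A_2 \in \Fix[]{\ell}{\cA}$.

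For \textbf{extensions}, let $0 \to A' \to A \to A'' \to 0$ be exact in $\cA$ with $A', A'' \in \Fix[]{\ell}{\cA}$. Feeding this into the long exact sequence of left derived functors of $\Fu{F}$, the vanishing of $\Ld{i}{\Fu{F}}(A')$ and $\Ld{i}{\Fu{F}}(A'')$ for $i \neq \ell$ gives $\Ld{i}{\Fu{F}}(A) = 0$ for $i \neq \ell$ (so $A$ satisfies (i)) and, in degree $\ell$, a short exact sequence $0 \to \Ld{\ell}{\Fu{F}}(A') \to \Ld{\ell}{\Fu{F}}(A) \to \Ld{\ell}{\Fu{F}}(A'') \to 0$. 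Applying the long exact sequence of right derived functors of $\Fu{G}$ to this sequence and using condition (ii) for $A'$ and $A''$, the identical bookkeeping shows $\Rd{i}{\Fu{G}}(\Ld{\ell}{\Fu{F}}(A)) = 0$ for $i \neq \ell$ (so $A$ satisfies (ii)) and yields a short exact sequence $0 \to \Rd{\ell}{\Fu{G}}(\Ld{\ell}{\Fu{F}}(A')) \to \Rd{\ell}{\Fu{G}}(\Ld{\ell}{\Fu{F}}(A)) \to \Rd{\ell}{\Fu{G}}(\Ld{\ell}{\Fu{F}}(A'')) \to 0$. Now $A \in \cC$, so $\eta^\ell_A$ is defined, and the naturality above assembles a commutative ladder from the original short exact sequence to the last one with vertical maps $\eta^\ell_{A'}$, $\eta^\ell_A$, $\eta^\ell_{A''}$; the outer two are isomorphisms by (iii) for $A'$ and $A''$, so the five lemma forces $\eta^\ell_A$ to be an isomorphism, i.e.\ $A$ satisfies (iii). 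Hence $A \in \Fix[]{\ell}{\cA}$. The only point demanding genuine care is the naturality (and additivity) of $\eta^\ell$ on $\cC$, since without it neither the block-diagonal identification nor the commutative ladder is available; everything else is the routine collapse of a long exact sequence flanked by vanishing terms, plus the five lemma. I expect that naturality verification to be the main, and only mildly technical, obstacle.
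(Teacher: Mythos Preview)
Your proposal is correct and is exactly the kind of argument the paper has in mind: the paper's own proof reads, in its entirety, ``Straightforward from the definitions.'' You have spelled out precisely that straightforward verification --- additivity of derived functors for summands, collapse of long exact sequences plus the five lemma for extensions, with naturality of $\eta^\ell$ (a formal consequence of the naturality of $h^\ell_{A,B}$ in \prpref{key}) supplying the needed commutative diagrams.
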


\begin{proof}
  Straightforward from the definitions.
\end{proof}

The next lemma (which does not use that $\Fu{G}$ is a right adjoint, but only that it is left~exact) is variant of Hartshorne \cite[III\S1 Prop.~1.2A]{Hartshorne}. Recall that $B \in \cB$ is called \emph{$\Fu{G}$-acyclic} if $\Rd{i}{\Fu{G}}(B)=0$ for all $i>0$. Similarly, $A \in \cA$ is called \emph{$\Fu{F}$-acyclic} if $\Ld{i}{\Fu{F}}(A)=0$ for all $i>0$.

Also recall that an additive functor $\Fu{T}$ between abelian categories is said to have \emph{finite homological dimension}, respectively, \emph{finite cohomological dimension}, if one has $\Ld{d}{\Fu{T}}=0$, respectively, $\Rd{d}{\Fu{T}}=0$, for some integer $d \geqslant 0$.

\begin{lem}
  \label{lem:G-on-quiso}
  Let $\gamma \colon X \to Y$ be a quasi-isomorphism between complexes in $\cB$ that consist of $\,\Fu{G}$-acyclic objects. If $\Fu{G}$ has finite cohomological dimension, then $\Fu{G}\gamma \colon \Fu{G}X \to \Fu{G}Y$ is a quasi-isomorphism.
\end{lem}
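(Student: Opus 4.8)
The plan is to reduce the statement to the finite-dimensional case and then argue by induction on the cohomological dimension of $\Fu{G}$, using a clever truncation/mapping-cone argument analogous to the classical Hartshorne result.

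First I would set $d \geqslant 0$ with $\Rd{d}{\Fu{G}}=0$ (i.e.\ $\Rd{i}{\Fu{G}}=0$ for $i \geqslant d$, which is what finite cohomological dimension gives after possibly enlarging $d$). The base case $d=0$ means $\Fu{G}$ is exact, so $\Fu{G}$ preserves all quasi-isomorphisms (it commutes with homology up to the obvious translation), and there is nothing to prove. For the inductive step, I would use the standard trick of extending $\Fu{G}$-acyclic objects by modules of smaller $\Fu{G}$-dimension. Concretely, for each object $B$ appearing in the complexes, embed $B \hookrightarrow I$ with $I$ injective and let $C = \Coker(B \to I)$; then from the long exact sequence of right derived functors one gets $\Rd{i}{\Fu{G}}(C) \cong \Rd{i+1}{\Fu{G}}(B)$ for $i \geqslant 1$ and a surjection $\Rd{0}{\Fu{G}}(I) \twoheadrightarrow \Rd{0}{\Fu{G}}(C)$. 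When $B$ is $\Fu{G}$-acyclic this shows $C$ is $\Fu{G}$-acyclic with cohomological dimension one less, and moreover the short exact sequence $0 \to B \to I \to C \to 0$ stays exact after applying $\Fu{G}$ (since $\Rd{1}{\Fu{G}}(B)=0$).

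The heart of the argument is then the following: doing the above embedding degreewise and functorially (using that $\cB$ has enough injectives, one can choose the embeddings $B \hookrightarrow I$ naturally enough, or simply work with the complexes directly), one obtains for any complex $Z$ of $\Fu{G}$-acyclics a short exact sequence of complexes $0 \to Z \to \tilde Z \to \bar Z \to 0$ where $\tilde Z$ consists of injectives and $\bar Z$ consists of $\Fu{G}$-acyclics of cohomological dimension $\leqslant d-1$, and this sequence stays exact after applying $\Fu{G}$. A quasi-isomorphism $\gamma\colon X \to Y$ then fits into a commutative ladder relating the sequences for $X$ and for $Y$; taking mapping cones (or using the nine-lemma / long exact homology sequence) reduces the claim for $\gamma$ to the claims that $\Fu{G}$ preserves the quasi-isomorphism on the injective-complex level and on the $(d-1)$-level. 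The latter is the inductive hypothesis. For the former, $\tilde X \to \tilde Y$ is a quasi-isomorphism of bounded-below (after a shift, or in the appropriate degreewise sense) complexes of injectives — actually the cleanest route is to note that a quasi-isomorphism between complexes of injectives is a homotopy equivalence in $\K{\cB}$ when the complexes are bounded appropriately, and $\Fu{G}$ being additive preserves homotopy equivalences; alternatively one observes $\Rd{i}{\Fu{G}}$ vanishes on injectives so $\Fu{G}$ applied to an exact complex of injectives is exact. Combining the mapping-cone exact triangle with these two facts via the long exact homology sequence yields that $\Fu{G}\gamma$ is a quasi-isomorphism.

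The main obstacle I anticipate is the functoriality/compatibility issue in the embedding step: I need the construction $Z \rightsquigarrow (\tilde Z, \bar Z)$ to be applicable simultaneously to $X$, to $Y$, and to $\gamma$, so that the reduction genuinely produces a morphism of short exact sequences of complexes. This is handled by either (i) choosing injective embeddings functorially (not always possible in an abstract abelian category, but one can instead pass to the mapping cylinder so that $\gamma$ becomes a degreewise split injection and then embed once), or (ii) working at the level of the single complex $\mathrm{Cone}(\gamma)$, which is an exact complex of $\Fu{G}$-acyclics, and proving directly by induction on $d$ that $\Fu{G}$ applied to an exact bounded(-below, after shift) complex of $\Fu{G}$-acyclics is again exact — this last formulation is the most economical and is exactly the shape of \cite[III\S1 Prop.~1.2A]{Hartshorne}. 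I would go with route (ii): prove the auxiliary statement ``$\Fu{G}$ sends exact complexes of $\Fu{G}$-acyclics to exact complexes, provided $\Fu{G}$ has finite cohomological dimension'' by induction on $d$ using the stupid truncation $0 \to Z_{\geqslant n} \to Z \to Z_{<n} \to 0$ together with the acyclic-assembly-type dévissage above, and then apply it to $\mathrm{Cone}(\gamma)$.
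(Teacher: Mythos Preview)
The paper does not actually prove this lemma: its proof reads, in full, ``This is left as an exercise to the reader.'' So there is no argument to compare against, and your task was really to supply the exercise.

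Your overall strategy---pass to the mapping cone $\mathrm{Cone}(\gamma)$, which is an exact complex of $\Fu{G}$-acyclics, and show that $\Fu{G}$ preserves exactness of such complexes---is the right one and is exactly the shape of the Hartshorne result the paper alludes to. The implementation you sketch, however, is more convoluted than necessary, and the ``stupid truncation'' step in your route~(ii) does not obviously help (the truncated pieces are no longer exact). The clean argument is a direct dimension-shift on the cycles: if $Z$ is exact with each $Z_n$ $\Fu{G}$-acyclic, set $K_n=\Ker{\partial^Z_n}$; the short exact sequences $0\to K_n\to Z_n\to K_{n-1}\to 0$ give $\Rd{i}{\Fu{G}}(K_{n-1})\cong\Rd{i+1}{\Fu{G}}(K_n)$ for $i\geqslant 1$, and iterating this $d$ times (where $\Rd{j}{\Fu{G}}=0$ for $j\geqslant d$) shows every $K_n$ is $\Fu{G}$-acyclic. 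Hence each short exact sequence stays exact after applying $\Fu{G}$, and $\Fu{G}Z$ is exact. Note that no boundedness assumption on $Z$ is needed---the finite cohomological dimension of $\Fu{G}$ is precisely what makes the unbounded case go through---and this point is not made explicit in your write-up.
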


\begin{proof}
  This is left as an exercise to the reader.
\end{proof}

Under suitable assumptions we obtain in \prpref[Propositions~]{Fix-simplified} and \prpref[]{coFix-simplified} below simplified descriptions of the categories $\Fix[]{\ell}{\cA}$ and $\coFix[]{\ell}{\cB}$.

\begin{dfn}
  \label{dfn:tilting-adjunction}
  The adjunction $(\Fu{F},\Fu{G})$ from \stpref{setup} is called a \emph{tilting adjunction} if it satisfies the following four conditions:
  \begin{prt}
    \item[\con{TA1\hspace*{-0.5pt}}\hspace*{-2pt}] For every projective object $P \in \cA$ the object $\Fu{F}(P)$ is $\Fu{G}$-acyclic and the unit~of~adjunc\-tion $\eta_P \colon P \to \Fu{G}\Fu{F}(P)$ is an isomorphism. In other words,  $\Proj{\cA} \subseteq \Fix[]{0}{\cA}$.
    
    \item[\con{TA2\hspace*{-0.5pt}}\hspace*{-2pt}] The functor $\Fu{G}$ has finite cohomological dimension.
    %, that is, $\Rd{d}{\Fu{G}}=0$ for some $d \geqslant 0$.  
  
\item[\con{TA3\hspace*{-0.5pt}}\hspace*{-2pt}] For every injective object $I \in \cB$ the object $\Fu{G}(I)$ is $\Fu{F}$-acyclic and the counit~of~adjunc\-tion $\varepsilon_I \colon \Fu{F}\Fu{G}(I) \to I$ is an isomorphism. In other words,  $\Inj{\cB} \subseteq \coFix[]{0}{\cB}$.
    
    \item[\con{TA4\hspace*{-0.5pt}}\hspace*{-2pt}] The functor $\Fu{F}$ has finite homological dimension. 
    %, that is, $\Ld{d}{\Fu{F}}=0$ for some $d \geqslant 0$.  
  \end{prt}
\end{dfn}

\begin{prp}
  \label{prp:Fix-simplified}
  If the adjunction $(\Fu{F},\Fu{G})$ satisfies conditions \con{TA1} and \con{TA2} in \dfnref{tilting-adjunction}, then for every integer $\ell$ and every $A \in \cA$ one has:
  \begin{displaymath}
    A \in \Fix[]{\ell}{\cA} 
    \quad \iff \quad
    \textnormal{$\Ld{i}{\Fu{F}}(A)=0$ for all $i \neq \ell$\,.}
  \end{displaymath}  
  In other words, in this case, conditions \textnormal{(ii)} and \textnormal{(iii)} in \dfnref{Fix-Cofix} are automatic.
\end{prp}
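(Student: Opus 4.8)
The direction ``$\Leftarrow$'' is the one that needs an argument; ``$\Rightarrow$'' is trivial, since the vanishing $\Ld{i}{\Fu{F}}(A)=0$ for $i\neq\ell$ is literally condition \textnormal{(i)} of \dfnref{Fix-Cofix}. So the plan is: assume $\Ld{i}{\Fu{F}}(A)=0$ for all $i\neq\ell$, fix a projective resolution $P=\Fu{P}(A)$, put $M=\Ld{\ell}{\Fu{F}}(A)$, and verify conditions \textnormal{(ii)} and \textnormal{(iii)} of \dfnref{Fix-Cofix} in turn. The two features I will lean on are that $\Fu{F}(P)$ has homology concentrated in degree $\ell$ with $\H{\ell}{\Fu{F}(P)}=M$ (this is the hypothesis), and that by \con{TA1} each $\Fu{F}(P_n)$ is $\Fu{G}$-acyclic while $\eta_P\colon P\to\Fu{G}\Fu{F}(P)$ is an isomorphism of complexes (a degreewise isomorphism which is a chain map by naturality of $\eta$).

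For \textnormal{(ii)} I would first invoke \lemref{I-iso} with $Y=\Fu{F}(P)$, $B=M$ and the identity isomorphism $1_M\colon\H{\ell}{\Fu{F}(P)}\to M$: this produces a quasi-isomorphism $q:=v^\ell_{\Fu{F}(P),M}(1_M)\colon\upSigma^{-\ell}\Fu{F}(P)\to\Fu{I}(M)$ between complexes of $\Fu{G}$-acyclic objects (the source by \con{TA1}, the target because injective objects are $\Fu{G}$-acyclic). By \con{TA2} and \lemref{G-on-quiso}, $\Fu{G}(q)$ is then a quasi-isomorphism; precomposing with the isomorphism $\upSigma^{-\ell}\eta_P$ yields a quasi-isomorphism $\upSigma^{-\ell}P\to\Fu{G}(\Fu{I}(M))$. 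Passing to homology gives $\Rd{i}{\Fu{G}}(M)=\H{-i}{\Fu{G}(\Fu{I}(M))}\cong\H{\ell-i}{P}$, which vanishes for $i\neq\ell$ (and equals $A$ for $i=\ell$). This is \textnormal{(ii)}, and it makes $\eta^\ell_A$ well-defined in the sense of \dfnref{unit-counit}.

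The substantive step is \textnormal{(iii)}: showing that the specific morphism $\eta^\ell_A=h^\ell_{A,M}(1_M)$ is an isomorphism, not merely that $A$ and $\Rd{\ell}{\Fu{G}}(M)$ are abstractly isomorphic. To do this I would chase $1_M$ through the diagram \eqref{big} that defines $h^\ell_{A,M}$ (taken with $B=M$). Going down the left-hand column, $1_M$ becomes the quasi-isomorphism $\upSigma^\ell q\colon\Fu{F}(P)\to\upSigma^\ell\Fu{I}(M)$, and along the bottom the induced adjunction isomorphism $\K{\cB}\rightleftarrows\K{\cA}$ sends it to $\Fu{G}(\upSigma^\ell q)\circ\eta_P$ (the homotopy-category adjunction bijection has the usual shape $f\mapsto\Fu{G}(f)\circ\eta_P$). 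Commutativity of \eqref{big} identifies this composite with $u^{-\ell}_{A,\Fu{G}(\Fu{I}(M))}(\eta^\ell_A)$, so by the last assertion of \lemref{P-iso} it is enough to check that $\Fu{G}(\upSigma^\ell q)\circ\eta_P$ is a quasi-isomorphism --- which it is, being the composite of the isomorphism $\eta_P$ with $\Fu{G}(\upSigma^\ell q)=\upSigma^\ell\Fu{G}(q)$, a quasi-isomorphism by the same \con{TA2}/\lemref{G-on-quiso} argument as in \textnormal{(ii)} (again $\Fu{F}(P)$ and $\upSigma^\ell\Fu{I}(M)$ are complexes of $\Fu{G}$-acyclics). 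Hence $\eta^\ell_A$ is an isomorphism, which is \textnormal{(iii)}.

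I expect \textnormal{(iii)} to be the only genuinely delicate point: one has to unwind the definition of the unit $\eta^\ell_A$ through the diagram \eqref{big} rather than exhibit some ad hoc isomorphism, and one should observe --- as the proposition asserts --- that the entire argument uses only \con{TA1} and \con{TA2}, the dual conditions \con{TA3} and \con{TA4} playing no role. The technical engine throughout is \lemref{G-on-quiso}, i.e.\ that a functor of finite cohomological dimension sends quasi-isomorphisms between complexes of acyclics to quasi-isomorphisms.
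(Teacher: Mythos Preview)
Your proposal is correct and follows essentially the same approach as the paper's proof: both construct the quasi-isomorphism $q=\gamma=v^\ell_{\Fu{F}(P),M}(1_M)$ from \lemref{I-iso}, use \con{TA1}, \con{TA2}, and \lemref{G-on-quiso} to see that $\Fu{G}(\upSigma^\ell q)\circ\eta_P$ is a quasi-isomorphism, deduce condition (ii) from its homology, and then read off condition (iii) by chasing $1_M$ through the diagram \eqref{big} and invoking the last clause of \lemref{P-iso}. The only cosmetic difference is that you prove (ii) first (via the shifted map $\upSigma^{-\ell}P\to\Fu{G}(\Fu{I}(M))$) and then retrace the diagram for (iii), whereas the paper builds the single map $\theta=\Fu{G}(\upSigma^\ell\gamma)\circ\eta_P$ and extracts both (ii) and (iii) from it; the underlying argument is identical.
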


\begin{proof}
  The implication ``$\Rightarrow$'' holds by \dfnref{Fix-Cofix}(i). Conversely, assume \mbox{$\Ld{i}{\Fu{F}}(A)=0$} for all $i \neq \ell$. We must argue that conditions (ii) and (iii) in \dfnref{Fix-Cofix} hold as well. Let $P$ be a projective resolution of $A$ and let $I$ be a injective resolution of $\Ld{\ell}{\Fu{F}}(A) = \Hnop{\ell}{\Fu{F}(P)}$. Our assumption means that the homology of the complex $\Fu{F}(P)$ is concentrated in degree $\ell$. With $B=\Ld{\ell}{\Fu{F}}(A)$ we now consider the following part of the diagram \eqref{big}:
\begin{equation}
  \label{eq:big2}
  \begin{gathered}
  \xymatrix@R=1.5pc@C=4pc{
    \Hom{\cB}{\Ld{\ell}{\Fu{F}}(A)}{\Ld{\ell}{\Fu{F}}(A)}
    \ar@{=}[d]
    &
    {}
    \\
    \Hom{\cB}{\Hnop{\ell}{\Fu{F}(P)}}{\Ld{\ell}{\Fu{F}}(A)}
    \ar[d]^-{\cong}_-{v \,\,:=\,\, v^\ell_{\Fu{F}(P),\,\Ld{\ell}{\Fu{F}}(A)}}
    & 
    {}
    \\
    \Hom{\K{\cB}}{\upSigma^{-\ell}\Fu{F}(P)}{I}
    \ar[d]^-{\cong}_-{\upSigma^{\ell}(-)}        
    &
    \Hom{\K{\cA}}{P}{\upSigma^{\ell}\Fu{G}(I)}
    \ar@{=}[d]    
    \\
    \Hom{\K{\cB}}{\Fu{F}(P)}{\upSigma^{\ell}I}
    \ar[r]^-{\mathrm{adjunction}}_-{\cong}       
    &
    \Hom{\K{\cA}}{P}{\Fu{G}(\upSigma^{\ell}I)}\,.
    }
  \end{gathered}    
\end{equation}      
Set \smash{$\gamma = v(1_{\Ld{\ell}{\Fu{F}}(A)}) \colon \upSigma^{-\ell}\Fu{F}(P) \to I$} in $\K{\cB}$, which is a quasi-isomor\-phism by \lemref{I-iso}. Under the maps in \eqref{big2}, the identity morphism \smash{$1_{\Ld{\ell}{\Fu{F}}(A)}$} is mapped to $\theta \in \Hom{\K{\cA}}{P}{\upSigma^{\ell}\Fu{G}(I)}$ given by $\theta = \Fu{G}(\upSigma^{\ell}\gamma) \circ \eta_P$, that is, $\theta$ is the composite:
\begin{equation}
  \label{eq:composite}
  \xymatrix{
    P \ar[r]^-{\eta_P} & \Fu{G}(\Fu{F}(P)) \ar[r]^-{\Fu{G}(\upSigma^{\ell}\gamma)} & \Fu{G}(\upSigma^{\ell}I) = \upSigma^{\ell}\Fu{G}(I)
  }\!.
\end{equation}
Here $\eta_P$ is an isomorphism by assumption \con{TA1}. As $\Fu{F}(P)$ and $\upSigma^{\ell}I$ consist of \mbox{$\Fu{G}$-acyclic} objects---again by \con{TA1}---the other assumption \con{TA2} together with \lemref{G-on-quiso} imply that the quasi-isomorphism~\mbox{$\upSigma^{\ell}\gamma \colon \Fu{F}(P) \to \upSigma^{\ell}I$} remains to be a quasi-isomorphism after application of $\Fu{G}$. Consequently, $\theta \colon P \to \upSigma^{\ell}\Fu{G}(I)$ is a quasi-isomorphism. As the homology of $P$ is concentrated in degree $0$ we get
\begin{displaymath}  
    \Rd{i}{\Fu{G}}(\Ld{\ell}{\Fu{F}}(A)) = \Hnop{-i}{\Fu{G}(I)} \cong
    \H{-i}{\upSigma^{-\ell}P} = \H{-i+\ell}{P} = 0 
  \quad \text{for all} \quad i \neq \ell\;,
\end{displaymath}
which proves condition \dfnref[]{Fix-Cofix}(ii). It now makes sense to consider the remaining part of~the~dia\-gram \eqref{big} (still with $B=\Ld{\ell}{\Fu{F}}(A)$), which gives us the middle equality below:
\begin{displaymath}  
   \eta^\ell_A = h^\ell_{A,\Ld{\ell}{\Fu{F}}(A)}(1_{\Ld{\ell}{\Fu{F}}(A)}) = 
   (u^{-\ell}_{A,\Fu{G}(I)})^{-1}(\theta) = \H{0}{\theta}\;.
\end{displaymath}  
Here the first equality is by \dfnref{unit-counit} and the last equality is by \lemref{P-iso}. As $\theta$ is a quasi-isomorphism, $\eta^\ell_A = \H{0}{\theta}$ is an isomorphism, and hence condition \dfnref[]{Fix-Cofix}(iii) holds.
\end{proof}

\begin{prp}
  \label{prp:coFix-simplified}
  If the adjunction $(\Fu{F},\Fu{G})$ satisfies conditions \con{TA3} and \con{TA4} in \dfnref{tilting-adjunction}, then for every integer $\ell$ and every $B \in \cB$ one has:
  \begin{displaymath}
    B \in \coFix[]{\ell}{\cB} 
    \quad \iff \quad
    \textnormal{$\Rd{i}{\Fu{G}}(B)=0$ for all $i \neq \ell$\,.}
  \end{displaymath}  
  In other words, in this case, conditions \textnormal{(ii$'$)} and \textnormal{(iii$'$)} in \dfnref{Fix-Cofix} are automatic.
\end{prp}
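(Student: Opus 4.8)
The plan is to deduce this from \prpref{Fix-simplified} by passing to opposite categories, since the hypotheses of \stpref{setup} are self-dual. First I would note that the opposite functors constitute an adjunction $\Fu{G}^{\mathrm{op}} \colon \cB^{\mathrm{op}} \rightleftarrows \cA^{\mathrm{op}} \colon \Fu{F}^{\mathrm{op}}$ with $\Fu{G}^{\mathrm{op}}$ left adjoint: the isomorphism $\Hom{\cB}{\Fu{F}A}{B} \cong \Hom{\cA}{A}{\Fu{G}B}$ of \stpref{setup} becomes $\Hom{\cB^{\mathrm{op}}}{B}{\Fu{F}^{\mathrm{op}}A} \cong \Hom{\cA^{\mathrm{op}}}{\Fu{G}^{\mathrm{op}}B}{A}$ in the opposite categories. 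As $\cB$ has enough injectives and $\cA$ enough projectives, $\cB^{\mathrm{op}}$ has enough projectives and $\cA^{\mathrm{op}}$ enough injectives, so $(\Fu{G}^{\mathrm{op}},\Fu{F}^{\mathrm{op}})$ again fits \stpref{setup}; moreover $\Ld{i}{(\Fu{G}^{\mathrm{op}})} = (\Rd{i}{\Fu{G}})^{\mathrm{op}}$ and $\Rd{i}{(\Fu{F}^{\mathrm{op}})} = (\Ld{i}{\Fu{F}})^{\mathrm{op}}$ by \ref{derived-functors}.

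Second I would check that conditions \con{TA3} and \con{TA4} for $(\Fu{F},\Fu{G})$ are exactly conditions \con{TA1} and \con{TA2} for $(\Fu{G}^{\mathrm{op}},\Fu{F}^{\mathrm{op}})$. A projective object of $\cB^{\mathrm{op}}$ is an injective object $I$ of $\cB$; the object $\Fu{G}^{\mathrm{op}}(I) = \Fu{G}(I)$ is $\Fu{F}^{\mathrm{op}}$-acyclic if and only if it is $\Fu{F}$-acyclic, by the formula for the derived functors just recorded; the unit of $(\Fu{G}^{\mathrm{op}},\Fu{F}^{\mathrm{op}})$ at $I$ is the morphism of $\cB^{\mathrm{op}}$ corresponding to the counit $\varepsilon_I \colon \Fu{F}\Fu{G}(I) \to I$ of $(\Fu{F},\Fu{G})$, hence an isomorphism precisely when $\varepsilon_I$ is; and finiteness of the cohomological dimension of $\Fu{F}^{\mathrm{op}}$ is finiteness of the homological dimension of $\Fu{F}$. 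In the same way I would unwind \dfnref{Fix-Cofix} and \dfnref{unit-counit} to see that, for $B \in \cB$, the three conditions \textnormal{(i$'$)}, \textnormal{(ii$'$)}, \textnormal{(iii$'$)} defining $B \in \coFix[]{\ell}{\cB}$ relative to $(\Fu{F},\Fu{G})$ are, respectively, the conditions \textnormal{(i)}, \textnormal{(ii)}, \textnormal{(iii)} defining $B \in \Fix[]{\ell}{\cB^{\mathrm{op}}}$ relative to $(\Fu{G}^{\mathrm{op}},\Fu{F}^{\mathrm{op}})$; in particular $\coFix[]{\ell}{\cB}$ and $\Fix[]{\ell}{\cB^{\mathrm{op}}}$ are the same full subcategory.

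Granting these identifications, the conclusion is immediate: \prpref{Fix-simplified} applied to $(\Fu{G}^{\mathrm{op}},\Fu{F}^{\mathrm{op}})$---legitimate by the two preceding paragraphs---gives, for every integer $\ell$ and every $B$, that $B \in \Fix[]{\ell}{\cB^{\mathrm{op}}}$ if and only if $\Ld{i}{(\Fu{G}^{\mathrm{op}})}(B) = 0$ for all $i \neq \ell$, that is, if and only if $\Rd{i}{\Fu{G}}(B) = 0$ for all $i \neq \ell$; and by the previous paragraph this says exactly $B \in \coFix[]{\ell}{\cB}$.

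The one step that is not purely mechanical---and hence what I expect to be the main obstacle---is the claim, used in the second paragraph, that condition \textnormal{(iii$'$)} dualises to condition \textnormal{(iii)}: that the morphism $\varepsilon^\ell_B$ of \dfnref{unit-counit} coincides, under the identification of hom-sets in $\cB$ and $\cB^{\mathrm{op}}$, with the morphism ``$\eta^\ell_B$'' produced by \dfnref{unit-counit} for the adjunction $(\Fu{G}^{\mathrm{op}},\Fu{F}^{\mathrm{op}})$. This rests on the fact that the isomorphism $h^\ell_{A,B}$ of \prpref{key} is self-dual: in the diagram \eqref{big} defining it, passing to opposite categories interchanges \lemref[Lemmas~]{P-iso} and \lemref[]{I-iso} (which are mutually dual), the translations $\upSigma^{-\ell}$ and $\upSigma^{\ell}$, and the two columns, so that the corresponding map for $(\Fu{G}^{\mathrm{op}},\Fu{F}^{\mathrm{op}})$ is the inverse of $h^\ell_{A,B}$; hence the map defining ``$\eta^\ell_B$'' for $(\Fu{G}^{\mathrm{op}},\Fu{F}^{\mathrm{op}})$ is $(h^\ell_{\Rd{\ell}{\Fu{G}}(B),B})^{-1}$, which is exactly the map defining $\varepsilon^\ell_B$ in \dfnref{unit-counit}. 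Should this bookkeeping prove awkward to write out, the fallback I would use is to reprove \prpref{Fix-simplified} verbatim with all arrows reversed, replacing \lemref{G-on-quiso} by its evident dual (a quasi-isomorphism between complexes of $\Fu{F}$-acyclic objects stays a quasi-isomorphism after applying $\Fu{F}$, provided $\Fu{F}$ has finite homological dimension) and diagram \eqref{big2} by its dual; I would keep whichever version is shorter.
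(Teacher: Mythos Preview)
Your proposal is correct and matches the paper's intent: the paper's own proof is the single sentence ``Similar to the proof of \prpref{Fix-simplified},'' which is exactly your fallback of rerunning that argument with all arrows reversed (using the dual of \lemref{G-on-quiso} and the dual of diagram \eqref{big2}). Your primary route via opposite categories is a legitimate formalisation of that same duality; the only delicate point you correctly flag---that $\varepsilon^\ell_B$ for $(\Fu{F},\Fu{G})$ coincides with the ``$\eta^\ell_B$'' for $(\Fu{G}^{\mathrm{op}},\Fu{F}^{\mathrm{op}})$---does hold, and your sketch of why (the diagram \eqref{big} defining $h^\ell$ is self-dual, swapping the roles of \lemref[Lemmas~]{P-iso} and \lemref[]{I-iso}) is sound.
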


\begin{proof}
  Similar to the proof of \prpref{Fix-simplified}. 
\end{proof}

\begin{thm}
  \label{thm:equivalence-simplified}
  If $(\Fu{F},\Fu{G})$ is a tilting adjunction, then there is an adjoint equivalence:
\begin{displaymath}
  \xymatrix@C=3pc{
  {
  \{ 
     A \in \cA \ | \ 
     \Ld{i}{\Fu{F}}(A)=0 \text{ for all } i \neq \ell
  \mspace{1mu}\}
  }
  \ar@<0.6ex>[r]^-{\Ld{\ell}{\Fu{F}}}
  &
  {
  \{ 
     B \in \cB  \ | \
     \Rd{i}{\Fu{G}}(B)=0 \text{ for all } i \neq \ell
  \mspace{1mu}\}
  }
  \ar@<0.6ex>[l]^-{\Rd{\ell}{\Fu{G}}}
  }\!.
\end{displaymath}      
\end{thm}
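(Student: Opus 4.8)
The plan is to read off this theorem as a purely formal consequence of three results already established above: \thmref{equivalence}, \prpref{Fix-simplified}, and \prpref{coFix-simplified}. The only thing to notice is that a tilting adjunction, by \dfnref{tilting-adjunction}, satisfies all of \con{TA1}--\con{TA4} simultaneously, so in particular it satisfies \con{TA1} and \con{TA2} (the hypotheses needed for \prpref{Fix-simplified}) as well as \con{TA3} and \con{TA4} (the hypotheses needed for \prpref{coFix-simplified}).

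First I would apply \prpref{Fix-simplified} to identify, for the fixed integer $\ell$, the category $\Fix[]{\ell}{\cA}$ with the full subcategory $\{A \in \cA \mid \Ld{i}{\Fu{F}}(A)=0 \text{ for all } i \neq \ell\}$ appearing on the left in the statement; this is precisely what that proposition asserts under \con{TA1} and \con{TA2}. Dually I would apply \prpref{coFix-simplified} to identify $\coFix[]{\ell}{\cB}$ with $\{B \in \cB \mid \Rd{i}{\Fu{G}}(B)=0 \text{ for all } i \neq \ell\}$, using \con{TA3} and \con{TA4}.

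Then I would invoke \thmref{equivalence}, which supplies for every $\ell$ an adjoint equivalence between $\Fix[]{\ell}{\cA}$ and $\coFix[]{\ell}{\cB}$ realized by the functors $\Ld{\ell}{\Fu{F}}$ and $\Rd{\ell}{\Fu{G}}$, with unit and counit $\eta^\ell$ and $\varepsilon^\ell$ from \dfnref{unit-counit}. Substituting the two descriptions obtained above for the source and target of this equivalence yields exactly the statement of the theorem, with the same pair of functors; no naturality or triangle identities need to be re-checked, as all of that has been packaged into \thmref{equivalence}.

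I do not anticipate any real difficulty here: all of the content has already been proved, and what remains is only the bookkeeping observation that the hypothesis ``tilting adjunction'' bundles together precisely the assumptions under which conditions (ii) and (iii) of \dfnref{Fix-Cofix} become automatic on the left and conditions (ii$'$) and (iii$'$) become automatic on the right. The mildly substantive point, already handled inside the proof of \prpref{Fix-simplified}, is that finite (co)homological dimension of $\Fu{F}$ and $\Fu{G}$ is what allows one to transport quasi-isomorphisms through these functors via \lemref{G-on-quiso}, thereby collapsing the defining conditions of $\Fix[]{\ell}{\cA}$ and $\coFix[]{\ell}{\cB}$ to the single vanishing condition in the displayed subcategories.
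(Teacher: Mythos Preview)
Your proposal is correct and matches the paper's own proof exactly: the paper simply says that in view of \prpref{Fix-simplified} and \prpref{coFix-simplified} the result is immediate from \thmref{equivalence}. Your additional commentary about \con{TA1}--\con{TA4} and the role of \lemref{G-on-quiso} is accurate but already absorbed into those propositions, so nothing further is needed.
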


\begin{proof}
  In view of \prpref[Propositions~]{Fix-simplified} and \prpref[]{coFix-simplified} this is immediate from \thmref{equivalence}.
\end{proof}

\section{Applications to tilting theory and commutative algebra}
\label{sec:Applications}

In this section, we demonstrate how some classic equivalences of categories from tilting theory and commutative algebra are special cases of \thmref[Theorems~]{equivalence} and \thmref[]{equivalence-simplified}. 

Tilting modules of projective dimension $\leqslant 1$ over artin algebras were originally conside\-red by Brenner and Butler \cite{BrennerButler} (although the term ``tilting'' first appeared in \cite{HappelRingel} by Happel and Ringel). Later people, such as Happel \cite[III\S3]{Happel} and Miyashita~\cite{Miyashita}, studied tilting modules of arbitrary finite projective dimension over general rings. 
If $\Gamma$ is an artin algebra with canonical duality $\operatorname{D} \colon \mod{\Gamma} \to \mod{\Go}$, then a finitely generated $\Gamma$-module $C$ is called cotilting if the $\Go$-module $\operatorname{D}(C)$ is tilting. 

The so-called Wakamatsu tilting modules constitute a good common generalization of both tilting and cotilting modules. In \cite{Wakamatsu88} Wakamatsu  introduced such modules over artin algebras; the following more general definition can be found in Wakamatsu \cite[Sec.~3]{Wakamatsu04}.

\begin{dfn}[Wakamatsu]
  \label{dfn:Wakamatsu-tilting}
  Let $\Gamma$ and $\Lambda$ be rings. A \emph{Wakamatsu tilting module} for the pair $(\Gamma,\Lambda)$ is a $(\Gamma,\Lambda)$-bimodule $T = {}_\Gamma T_{\!\Lambda}$ that satisfies the following conditions:
\begin{prt}
  \item[\con{W1}] The modules ${}_\Gamma T$ and $T_{\!\Lambda}$ admit resolutions by finitely generated projective modules.
  \item[\con{W2}] $\Ext{\Gamma}{i}{T}{T}=0$ and $\Ext{\Lo}{i}{T}{T}=0$ for all $i>0$.
  \item[\con{W3}] The canonical map $\Lambda \to \Hom{\Gamma}{T}{T}$ is an isomorphism of $(\Lambda,\Lambda)$-bimodules and the canonical map $\Gamma \to \Hom{\Lo}{T}{T}$ is an isomorphism of $(\Gamma,\Gamma)$-bimodules.
\end{prt}
\end{dfn}

The original version of the next result is a classic theorem by Brenner and Butler \cite{BrennerButler}; it was later improved by Happel \cite[III\S3]{Happel} and Miyashita~\cite[Thm.~1.16]{Miyashita}. All of these results are covered by following corollary of \thmref{equivalence-simplified}.

\begin{cor}[Brenner--Butler and Happel]
  \label{cor:BBH}
  Let $\Gamma$ and $\Lambda$ be rings. If $\,T = {}_\Gamma T_{\!\Lambda}$ is a Wakamatsu tilting module for which $\pd{\Gamma}{T}$ and $\pd{\Lo}{T}$ are finite, then there is for every $\ell \in \mathbb{Z}$ an adjoint equivalence:
\begin{displaymath}
  \xymatrix@C=5pc{
  {
  \left\{ \!\textnormal{\normalsize $M \in \Mod{\Lambda}$} \,\left|\!\!\!
    \begin{array}{c}
      \textnormal{\normalsize $\Tor{\Lambda}{i}{T}{M}=0$} \\
      \textnormal{\normalsize for all $i \neq \ell$} 
    \end{array}
  \right. \!\!\!\!\!\right\}
  }
  \ar@<0.8ex>[r]^-{\Tor{\Lambda}{\ell}{T}{-}}
  &
  {
  \left\{ \!\textnormal{\normalsize $N \in \Mod{\Gamma}$} \,\left|\!\!\!
    \begin{array}{c}
      \textnormal{\normalsize $\Ext{\Gamma}{i}{T}{N}=0$} \\
      \textnormal{\normalsize for all $i \neq \ell$} 
    \end{array}
  \right. \!\!\!\!\!\right\}
  }  \ar@<0.8ex>[l]^-{\Ext{\Gamma}{\ell}{T}{-}}
  }\!.
\end{displaymath}      
If $\Gamma$ and $\Lambda$ are artian algebras and the modules ${}_\Gamma T$ and $T_{\!\Lambda}$ are finitely generated, then the categories $\Mod{\Lambda}$ and $\Mod{\Gamma}$ may be replaced by $\mod{\Lambda}$ and $\mod{\Gamma}$.
\end{cor}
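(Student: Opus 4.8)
The plan is to deduce the corollary from \thmref{equivalence-simplified}, applied to the adjunction with $\Fu{F}=T\otimes_\Lambda-\colon\Mod{\Lambda}\to\Mod{\Gamma}$ and $\Fu{G}=\Hom{\Gamma}{T}{-}\colon\Mod{\Gamma}\to\Mod{\Lambda}$ from \exaref{2}; this adjunction satisfies \stpref{setup} since module categories have enough projectives and enough injectives. By \exaref{2} one has $\Ld{i}{\Fu{F}}=\Tor{\Lambda}{i}{T}{-}$ and $\Rd{i}{\Fu{G}}=\Ext{\Gamma}{i}{T}{-}$, so the two categories in the statement are precisely $\{A\mid\Ld{i}{\Fu{F}}(A)=0$ for $i\neq\ell\}$ and $\{B\mid\Rd{i}{\Fu{G}}(B)=0$ for $i\neq\ell\}$, and the indicated functors are $\Ld{\ell}{\Fu{F}}$ and $\Rd{\ell}{\Fu{G}}$. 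Hence it suffices to verify that $(\Fu{F},\Fu{G})$ is a tilting adjunction, i.e.\ that conditions \con{TA1}--\con{TA4} of \dfnref{tilting-adjunction} hold.

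Conditions \con{TA2} and \con{TA4} are immediate from the finiteness hypotheses: $\pd{\Gamma}{T}<\infty$ forces $\Rd{i}{\Fu{G}}=\Ext{\Gamma}{i}{T}{-}=0$ for $i\gg0$, and $\fd{\Lo}{T}\leqslant\pd{\Lo}{T}<\infty$ forces $\Ld{i}{\Fu{F}}=\Tor{\Lambda}{i}{T}{-}=0$ for $i\gg0$. For \con{TA1}, I would first use \con{W1}: since ${}_\Gamma T$ has a resolution by finitely generated projective $\Gamma$-modules, both $\Hom{\Gamma}{T}{-}$ and all $\Ext{\Gamma}{i}{T}{-}$ commute with arbitrary direct sums. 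Then for a free module $\Lambda^{(X)}$ one has $\Fu{F}(\Lambda^{(X)})\cong T^{(X)}$, so $\Ext{\Gamma}{i}{T}{T^{(X)}}\cong(\Ext{\Gamma}{i}{T}{T})^{(X)}=0$ for $i>0$ by \con{W2}, and the unit $\eta_{\Lambda^{(X)}}$ is an isomorphism because it is the coproduct of copies of $\eta_\Lambda\colon\Lambda\to\Hom{\Gamma}{T}{T}$, each of which is the canonical map and hence an isomorphism by \con{W3}. Thus $\Lambda^{(X)}\in\Fix[]{0}{\Mod{\Lambda}}$, and since this category is closed under direct summands (\lemref{extension}), every projective $\Lambda$-module lies in it, which is \con{TA1}.

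The main obstacle is \con{TA3}: for every injective $\Gamma$-module $I$ one must show that $\Hom{\Gamma}{T}{I}$ is $\Fu{F}$-acyclic and that the counit $\varepsilon_I\colon\Fu{F}\Fu{G}(I)\to I$ is an isomorphism. Here I would exploit the ``$\Lo$-side'' Wakamatsu conditions. Because $\pd{\Lo}{T}<\infty$ and, by \con{W1}, $T$ has a resolution by finitely generated projective right $\Lambda$-modules, it has a finite such resolution $0\to Q_s\to\cdots\to Q_0\to T\to0$. Applying $\Hom{\Lo}{-}{T}$ and invoking \con{W2} ($\Ext{\Lo}{i}{T}{T}=0$ for $i>0$) and \con{W3} ($\Hom{\Lo}{T}{T}\cong\Gamma$) yields an exact sequence of left $\Gamma$-modules $0\to\Gamma\to\Hom{\Lo}{Q_0}{T}\to\cdots\to\Hom{\Lo}{Q_s}{T}\to0$. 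For each finitely generated projective $Q_j$ there is a natural isomorphism $Q_j\otimes_\Lambda\Hom{\Gamma}{T}{I}\cong\Hom{\Gamma}{\Hom{\Lo}{Q_j}{T}}{I}$ (tensor--hom adjunction together with the standard evaluation isomorphism valid for finitely generated projectives), and these assemble into an isomorphism of complexes $Q_\bullet\otimes_\Lambda\Hom{\Gamma}{T}{I}\cong\Hom{\Gamma}{\Hom{\Lo}{Q_\bullet}{T}}{I}$. As $I$ is injective, $\Hom{\Gamma}{-}{I}$ is exact, so the right-hand complex has homology $\Hom{\Gamma}{\Gamma}{I}\cong I$ in degree $0$ and $0$ elsewhere; therefore $\Tor{\Lambda}{i}{T}{\Hom{\Gamma}{T}{I}}=0$ for $i\neq0$ and $T\otimes_\Lambda\Hom{\Gamma}{T}{I}\cong I$. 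A routine check identifies this last isomorphism with $\varepsilon_I$, giving \con{TA3}. Granting \con{TA1}--\con{TA4}, \thmref{equivalence-simplified} now produces the first equivalence, for every $\ell\in\mathbb{Z}$.

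For the final assertion, I would note that by \exaref{2} the adjunction $(\Fu{F},\Fu{G})$ restricts to $\mod{\Lambda}\rightleftarrows\mod{\Gamma}$, still satisfying \stpref{setup}, when $\Gamma$ and $\Lambda$ are artin algebras and ${}_\Gamma T$, $T_{\!\Lambda}$ are finitely generated. The verification of \con{TA1}--\con{TA4} for this restriction is the same argument, only easier: all modules involved are finitely generated, so the passages to arbitrary coproducts in \con{TA1} are unnecessary, a finite finitely generated projective resolution of $T$ still exists for \con{TA3}, and $\Ext$ and $\Tor$ of finitely generated modules are insensitive to whether they are computed in $\mod{-}$ or in $\Mod{-}$. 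Applying \thmref{equivalence-simplified} to this restricted tilting adjunction gives the version of the corollary with $\Mod{\Lambda}$, $\Mod{\Gamma}$ replaced by $\mod{\Lambda}$, $\mod{\Gamma}$.
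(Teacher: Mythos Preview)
Your proposal is correct and follows exactly the paper's approach: apply \thmref{equivalence-simplified} to the adjunction of \exaref{2} after checking that the Wakamatsu conditions together with the finiteness of $\pd{\Gamma}{T}$ and $\pd{\Lo}{T}$ make it a tilting adjunction. The paper merely says this verification is ``straightforward''; you have carried it out in full, and your arguments for \con{TA1}--\con{TA4} (including the dual-basis/evaluation isomorphism used for \con{TA3}) are sound.
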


\begin{proof}
  Consider the adjunction 
\smash{\mbox{$\!\!\xymatrix@C=1.2pc{
    T \otimes_\Lambda- \colon \Mod{\Lambda}
    \ar@<0.5ex>[r]
    & 
    \Mod{\Gamma} : \Hom{\Gamma}{T}{-}
    \ar@<0.5ex>[l]
  }\!\!$}}
from \exaref{2}. Under the given assumptions on $T$, it is straightforward to verify that this is a tilting adjunction in the sense of \dfnref{tilting-adjunction}. Now apply \thmref{equivalence-simplified}.
\end{proof}

The following corollary of \thmref{equivalence-simplified} recovers
\cite[Prop.~8.1]{Wakamatsu04} by Wakamatsu.

\begin{cor}[Wakamatsu]
  \label{cor:Wakamatsu}
    Assume that $\Gamma$ is a left coherent ring and that $\Lambda$ is right coherent ring. If $T = {}_\Gamma T_{\!\Lambda}$ is a Wakamatsu tilting module for which $\id{\Gamma}{T}$ and $\id{\Lo}{T}$ are finite, then there is for every $\ell \in \mathbb{Z}$ an adjoint equivalence:
\begin{displaymath}
  \xymatrix@C=5pc{
  {
  \left\{ \!\textnormal{\normalsize $M \in \mod{\Gamma}$} \,\left|\!\!\!
    \begin{array}{c}
      \textnormal{\normalsize $\Ext{\Gamma}{i}{M}{T}=0$} \\
      \textnormal{\normalsize for all $i \neq \ell$} 
    \end{array}
  \right. \!\!\!\!\!\right\}
  }
  \ar@<0.8ex>[r]^-{\Ext{\Gamma}{\ell}{-}{T}^{\mathrm{op}}
  }
  &
  {
  \left\{ \!\textnormal{\normalsize $N \in \mod{\Lo}$} \,\left|\!\!\!
    \begin{array}{c}
      \textnormal{\normalsize $\Ext{\Lo}{i}{N}{T}=0$} \\
      \textnormal{\normalsize for all $i \neq \ell$} 
    \end{array}
  \right. \!\!\!\!\!\right\}^{\!\!\mathrm{op}}
  }  \ar@<0.8ex>[l]^-{\Ext{\Lo}{\ell}{-}{T}}
  }\!.
\end{displaymath}          
\end{cor}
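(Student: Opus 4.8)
The plan is to deduce \corref{Wakamatsu} from \thmref{equivalence-simplified} by exhibiting the relevant adjunction as a tilting adjunction. First I would take the adjunction from \exaref{1}, namely
\begin{displaymath}
  \xymatrix@C=6pc{
    \mod{\Gamma}
    \ar@<0.6ex>[r]^-{\Fu{F}\mspace{3mu}=\mspace{3mu}\Hom{\Gamma}{-}{T}^\mathrm{op}}
    &
    \mod{\Lo}^\mathrm{op}
    \ar@<0.6ex>[l]^-{\Fu{G}\mspace{3mu}=\mspace{3mu}\Hom{\Lo}{-}{T}}
  }\!,
\end{displaymath}
which is available because \con{W1} guarantees that ${}_\Gamma T$ and $T_{\!\Lambda}$ are finitely presented and that $\Gamma$ (being left coherent) and $\Lambda$ (being right coherent) have the property that $\mod{\Gamma}$ and $\mod{\Lo}$ are abelian with enough projectives; thus \stpref{setup} is satisfied with $\cA=\mod{\Gamma}$ and $\cB=\mod{\Lo}^\mathrm{op}$. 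Recall from the end of \exaref{1} that $\Ld{i}{\Fu{F}} = \Ext{\Gamma}{i}{-}{T}^\mathrm{op}$ and $\Rd{i}{\Fu{G}} = \Ext{\Lo}{i}{-}{T}$.

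Next I would verify the four conditions \con{TA1}--\con{TA4} of \dfnref{tilting-adjunction}. For \con{TA1}: if $P \in \mod{\Gamma}$ is projective (hence a summand of a finitely generated free $\Gamma$-module), then $\Fu{F}(P)=\Hom{\Gamma}{P}{T}^\mathrm{op}$ is, up to that opposite, a summand of a finite sum of copies of $T_{\!\Lambda}$, so $\Rd{i}{\Fu{G}}(\Fu{F}(P)) = \Ext{\Lo}{i}{T}{T}$-type groups, which vanish for $i>0$ by \con{W2}; and the unit $\eta_P$ is a summand of the canonical map $\Gamma \to \Hom{\Lo}{T}{T}$, an isomorphism by \con{W3} --- so $P \in \Fix[]{0}{\cA}$. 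Condition \con{TA2} says $\Fu{G}$ has finite cohomological dimension; since $\Rd{i}{\Fu{G}} = \Ext{\Lo}{i}{-}{T}$ vanishes for $i > \id{\Lo}{T}$, this is exactly the hypothesis that $\id{\Lo}{T}$ is finite. Dually, \con{TA3} follows from \con{W2} and \con{W3} applied on the other side (injective objects of $\mod{\Lo}^\mathrm{op}$ are projective objects of $\mod{\Lo}$, i.e.\ summands of finitely generated free $\Lo$-modules, and the counit is a summand of the canonical map $\Lambda \to \Hom{\Gamma}{T}{T}$), and \con{TA4} holds because $\Ld{i}{\Fu{F}} = \Ext{\Gamma}{i}{-}{T}^\mathrm{op}$ vanishes for $i > \id{\Gamma}{T}$, which is finite by hypothesis. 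Hence $(\Fu{F},\Fu{G})$ is a tilting adjunction.

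Finally, I would invoke \thmref{equivalence-simplified} directly: it yields an adjoint equivalence between $\{A \in \mod{\Gamma} \mid \Ld{i}{\Fu{F}}(A)=0 \text{ for all } i \neq \ell\}$ and $\{B \in \mod{\Lo}^\mathrm{op} \mid \Rd{i}{\Fu{G}}(B)=0 \text{ for all } i \neq \ell\}$, with functors $\Ld{\ell}{\Fu{F}}$ and $\Rd{\ell}{\Fu{G}}$. Translating via $\Ld{i}{\Fu{F}} = \Ext{\Gamma}{i}{-}{T}^\mathrm{op}$ and $\Rd{i}{\Fu{G}} = \Ext{\Lo}{i}{-}{T}$, and unwinding the opposite category (so that the target is $\{N \in \mod{\Lo} \mid \Ext{\Lo}{i}{N}{T}=0 \text{ for all } i\neq\ell\}^\mathrm{op}$, and $\Ld{\ell}{\Fu{F}} = \Ext{\Gamma}{\ell}{-}{T}^\mathrm{op}$ becomes the displayed labelled arrow), gives precisely the asserted equivalence. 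I do not expect any real obstacle here; the only point requiring a little care is the bookkeeping with the opposite category --- making sure that ``$\ell$-fixed/cofixed'' conditions in $\mod{\Lo}^\mathrm{op}$ translate to the stated $\Ext{\Lo}{i}{-}{T}$-vanishing in $\mod{\Lo}$ and that the direction of the functors comes out right --- but this is purely formal, exactly as in \corref{BBH}.
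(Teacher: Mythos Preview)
Your proposal is correct and follows exactly the paper's approach: the paper's proof simply says to take the adjunction from \exaref{1}, notes that under the given assumptions it is ``straightforward to verify'' that this is a tilting adjunction, and then applies \thmref{equivalence-simplified}. You have merely spelled out the straightforward verification of \con{TA1}--\con{TA4} in detail, and your bookkeeping with the opposite category is correct.
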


\begin{proof}
  Consider the adjunction 
\smash{\mbox{$\!\!\xymatrix@C=1.2pc{
    \Hom{\Gamma}{-}{T}^\mathrm{op} \colon \mod{\Gamma}
    \ar@<0.5ex>[r]
    & 
    \mod{\Lo}^\mathrm{op} : \Hom{\Lo}{-}{T}
    \ar@<0.5ex>[l]
  }\!\!$}}
from \exaref{1}. Under the given assumptions on $T$, it is straightforward to verify that this is a tilting adjunction in the sense of \dfnref{tilting-adjunction}. Now apply \thmref{equivalence-simplified}.
\end{proof}

Recall that a \emph{semidualizing} module over a commutative noetherian ring $R$ is nothing but a (balanced) Wakamatsu tilting module for the pair $(R,R)$.

The next consequence of \thmref{equivalence} seems to be new in the case where $\ell>0$. For $\ell=0$ it is a classic result, sometimes called \emph{Foxby equivalence}, of Foxby \cite[Sect.~1]{HBF72}; see also Avramov and Foxby \cite[Thm.~(3.2) and Prop.~(3.4)]{LLAHBF97} and Christensen \cite[Obs.~(4.10)]{Christensen01}.

\begin{cor}[Foxby]
  \label{cor:Foxby}
  Let $R$ be a commutative noetherian ring. If $C$ is a semidualizing $R$-module, then there is for every $\ell \in \mathbb{Z}$ an adjoint equivalence:
\begin{align*}
  \xymatrix@R=2pc{
  {
  \left\{ \!\textnormal{\normalsize $M \in \Mod{R}$} \,\left|\!
    \begin{array}{l}
      \textnormal{\normalsize $\Tor{R}{i}{C}{M}=0$ for all $i \neq \ell$,} \\
      \textnormal{\normalsize $\Ext{R}{i}{C}{\Tor{R}{\ell}{C}{M}}=0$ for all $i \neq \ell$, and} \\      
      \textnormal{\normalsize $\eta^\ell_M \colon M \to \Ext{R}{\ell}{C}{\Tor{R}{\ell}{C}{M}}$ is an isomorphism}
    \end{array}
  \right. \!\!\!\right\}\phantom{.}
  }
  \ar@<-0.8ex>[d]_-{\Tor{R}{\ell}{C}{-}}
  \\
  {
  \left\{ \!\textnormal{\normalsize $N \in \Mod{R}$} \,\left|\!
    \begin{array}{l}
      \textnormal{\normalsize $\Ext{R}{i}{C}{N}=0$ for all $i \neq \ell$,} \\
      \textnormal{\normalsize $\Tor{R}{i}{C}{\Ext{R}{\ell}{C}{N}}=0$ for all $i \neq \ell$, and} \\      
      \textnormal{\normalsize $\varepsilon^\ell_N \colon \Tor{R}{\ell}{C}{\Ext{R}{\ell}{C}{N}} \to B$ is an isomorphism}
    \end{array}
  \right. \!\!\!\right\}.
  }
  \ar@<-0.8ex>[u]_-{\Ext{R}{\ell}{C}{-}}
  }
\end{align*}      
\end{cor}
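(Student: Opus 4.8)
The plan is to realize the two displayed categories as the categories $\Fix[]{\ell}{\cA}$ and $\coFix[]{\ell}{\cB}$ of $\ell$-fixed and $\ell$-cofixed objects attached to a concrete adjunction, and then quote \thmref{equivalence}. Specifically, apply \exaref{2} with $\Gamma = \Lambda = R$ and $T = C$; this gives the adjunction $\Fu{F} = C \otimes_R - \colon \Mod{R} \rightleftarrows \Mod{R} \colon \Hom{R}{C}{-} = \Fu{G}$. As $\Mod{R}$ has enough projectives and enough injectives, this is an instance of \stpref{setup} with $\cA = \cB = \Mod{R}$, and by the last line of \exaref{2} one has $\Ld{i}{\Fu{F}} = \Tor{R}{i}{C}{-}$ and $\Rd{i}{\Fu{G}} = \Ext{R}{i}{C}{-}$ for every $i$.

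The next step is a termwise comparison of definitions. Unwinding the three conditions of \dfnref{Fix-Cofix} for this particular adjunction, an $R$-module $M$ lies in $\Fix[]{\ell}{\Mod{R}}$ precisely when $\Tor{R}{i}{C}{M} = 0$ for all $i \neq \ell$, when $\Ext{R}{i}{C}{\Tor{R}{\ell}{C}{M}} = 0$ for all $i \neq \ell$, and when the morphism $\eta^\ell_M \colon M \to \Ext{R}{\ell}{C}{\Tor{R}{\ell}{C}{M}}$ of \dfnref{unit-counit} is an isomorphism; that is, $\Fix[]{\ell}{\Mod{R}}$ is exactly the first category in the statement. Dually, $\coFix[]{\ell}{\Mod{R}}$ coincides with the second one, the morphism $\varepsilon^\ell_N$ appearing in the statement being the counit from \dfnref{unit-counit}. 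This identification is the only thing to check, and it is pure unwinding of notation, so there is no real obstacle; all of the substance lies in the already-proved \thmref{equivalence}, which upon this identification supplies the adjoint equivalence $\Ld{\ell}{\Fu{F}} \colon \Fix[]{\ell}{\Mod{R}} \rightleftarrows \coFix[]{\ell}{\Mod{R}} \colon \Rd{\ell}{\Fu{G}}$, i.e.\ the asserted equivalence with $\Tor{R}{\ell}{C}{-}$ and $\Ext{R}{\ell}{C}{-}$.

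Two remarks seem worth appending for context. First, unlike \corref[Corollaries~]{BBH} and \corref[]{Wakamatsu}, here one cannot appeal to the cleaner \thmref{equivalence-simplified}: a \emph{semidualizing} module $C$ has finite projective (resp.\ injective) dimension only in degenerate cases, so the functors $\Tor{R}{*}{C}{-}$ and $\Ext{R}{*}{C}{-}$ need not be of finite homological (resp.\ cohomological) dimension, and $(\Fu{F},\Fu{G})$ is typically not a tilting adjunction in the sense of \dfnref{tilting-adjunction}; hence conditions (ii), (iii), (ii$'$), (iii$'$) genuinely have to be retained in the description of the two categories. Second, in fact \thmref{equivalence} imposes no hypothesis on $C$ at all, so the displayed equivalence is valid for an arbitrary $R$-module $C$; we phrase it for semidualizing $C$ to make contact with the literature, noting also that for $\ell = 0$ the remark following \dfnref{unit-counit} identifies $\eta^0$ and $\varepsilon^0$ with the genuine unit and counit of $(\Fu{F},\Fu{G})$, whereupon conditions (ii)/(iii) and (ii$'$)/(iii$'$) reduce to the defining membership conditions for the Auslander and Bass classes of $C$ and one recovers the classical Foxby equivalence \cite{HBF72}.
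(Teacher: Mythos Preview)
Your proof is correct and follows exactly the same approach as the paper's own proof, which is the one-liner ``Apply \thmref{equivalence} to \exaref{2} with $\Gamma=R=\Lambda$ and $T=C$.'' Your additional remarks---that \thmref{equivalence-simplified} is unavailable here because a semidualizing module need not have finite projective or injective dimension, and that the $\ell=0$ case recovers the classical Auslander/Bass class formulation---are accurate and helpful elaborations, though they go beyond what the paper records.
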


\begin{proof}
  Apply \thmref{equivalence} to \exaref{2} with $\Gamma=R=\Lambda$ and $T=C$.
\end{proof}

\begin{exa}
  \label{exa:Matlis1}
  Let $(R,\mathfrak{m},k)$ be a commutative noetherian local ring. Recall that an $R$-module $M$ is \emph{Matlis reflexive} if the canonical map $M \to \Hom{R}{\Hom{R}{M}{E_R(k)}}{E_R(k)}$ is an isomorphism. By applying \thmref{equivalence} with $\ell=0$ to the adjunction from \exaref{1} with $\Gamma=R=\Lambda$ and $T=E_R(k)$, one gets the (almost trivial) adjoint equivalence:
\begin{displaymath} 
  \xymatrix@C=6pc{
    \{ \textnormal{Matlis reflexive $R$-modules} \}
    \ar@<0.6ex>[r]^-{\Hom{R}{-}{E_R(k)}^\mathrm{op}} 
    & 
    \{ \textnormal{Matlis reflexive $R$-modules} \}^{\mathrm{op}}
    \ar@<0.6ex>[l]^-{\Hom{R}{-}{E_R(k)}}    
  }\!.
\end{displaymath}
\end{exa}

\section{Derivatives of the main result in the case $\ell=0$}
\label{sec:Derivatives}

In this section, we consider the equivalence from \thmref{equivalence} with $\ell=0$ and show~that sometimes it restricts to an equivalence between certain ``finite'' objects in $\Fix[]{0}{\cA}$ and $\coFix[]{0}{\cB}$. The precise statements can be found in \thmref[Theorems~]{equivalence-gen} and \thmref[]{equivalence-res}. 

For an object $X$ in an abelian category $\cC$ we use the standard notation $\add[\cC]{X}$ for the class of objects in $\cC$ that are direct summands in finite direct sums of copies of $X$. 

\begin{dfn}
  \label{dfn:gen-res}
  Let $\cC$ be an abelian category, let $X \in \cC$, and let $d \in \mathbb{N}_0$. 
  
  An object \mbox{$C \in \cC$} is said to be \emph{$d$-generated by $X$}, respectively, \emph{$d$-cogenerated by $X$}, if there is an exact sequence $X_d \to \cdots \to X_0 \to C \to 0$, respectively, $0 \to C \to X^0 \to \cdots \to X^d$, where $X_0,\ldots,X_d$, respectively, $X^0,\ldots,X^d$, belong to $\add[\cC]{X}$. The full subcategory of $\cC$ consisting of all such objects is denoted by $\gen{\cC}{d}{X}$, respectively, $\cogen{\cC}{d}{X}$.
  
  We say that \mbox{$C \in \cC$} \emph{has an $\add[\cC]{X}$-resolution of length $d$}, respectively, \emph{has an $\add[\cC]{X}$-coresolution of length $d$}, if there exists an exact sequence $0 \to X_d \to \cdots \to X_0 \to C \to 0$, respectively, $0 \to C \to X^0 \to \cdots \to X^d \to 0$, where $X_0,\ldots,X_d$, respectively, $X^0,\ldots,X^d$, belong to $\add[\cC]{X}$. The full subcategory of $\cC$ consisting of all such objects is denoted by $\res{\cC}{d}{X}$, respectively, $\cores{\cC}{d}{X}$.  
\end{dfn}

\begin{rmk}
  \label{rmk:gen-op}
  Note that as full subcategories of $\cC^\mathrm{op}$ one has $\gen{\cC^\mathrm{op}}{d}{X} = \cogen{\cC}{d}{X}^\mathrm{op}$ and $\res{\cC^\mathrm{op}}{d}{X} = \cores{\cC}{d}{X}^\mathrm{op}$.
  Also note that $\res{\cC}{0}{X} = \add[\cC]{X} = \cores{\cC}{0}{X}$.
\end{rmk}

\begin{exa}
  \label{exa:fg-Artinian}
  Let $(R,\mathfrak{m},k)$ be a commutative noetherian local ring. There are equalities: 
  \begin{align*}
    \gen{\Mod{R}}{0}{R} &= \{ \textnormal{Finitely generated $R$-modules} \}
    \\
    \cogen{\Mod{R}}{0}{E_R(k)} &= \{ \textnormal{Artinian $R$-modules} \}\;,    
  \end{align*}
where the first one is trivial and the second one is well-known; see e.g.~\cite[Thm.~3.4.3]{rha}.

  If $R$ is Cohen--Macaulay with dimension $d$ and a dualizing module $\upOmega$, then one has:
  \begin{align*}
    \res{\Mod{R}}{d}{R} &= \{ \textnormal{Finitely generated $R$-modules with finite projective dimension} \}
    \\
    \res{\Mod{R}}{d}{\upOmega} &= \{ \textnormal{Finitely generated $R$-modules with finite injective dimension} \}\;.    
  \end{align*}
Here the first equality is well-known and the second one follows easily from the existence of maximal Cohen--Macaulay approximations \cite[Thm.~A]{MAsROB89}; see also \cite[Exer.~3.3.28]{BruHer}.
\end{exa}

\begin{lem}
  \label{lem:thick-1}
  For $\ell=0$ the categories from \dfnref{Fix-Cofix} have the following properties:
  \begin{prt}
  \item The category $\Fix[]{0}{\cA}$ is closed under direct summands, extensions, and kernels of epimorphisms in $\cA$. 
  
  \item The category $\coFix[]{0}{\cB}$ is closed under direct summands, extensions, and cokernels of monomorphisms in $\cB$. 
  \end{prt}
\end{lem}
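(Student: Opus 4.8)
The plan is to reduce everything to one non-formal point. Closure of $\Fix[]{0}{\cA}$ and $\coFix[]{0}{\cB}$ under direct summands and extensions is already recorded in \lemref{extension} (it is restated here only for convenience), so the actual content is closure of $\Fix[]{0}{\cA}$ under kernels of epimorphisms and of $\coFix[]{0}{\cB}$ under cokernels of monomorphisms. Moreover, I would deduce part (b) from part (a) by passing to opposite categories: the opposite functors form an adjunction $\Fu{G}^{\mathrm{op}} \colon \cB^{\mathrm{op}} \rightleftarrows \cA^{\mathrm{op}} \colon \Fu{F}^{\mathrm{op}}$ with $\Fu{G}^{\mathrm{op}}$ the left adjoint, this adjunction satisfies \stpref{setup} (as $\cB^{\mathrm{op}}$ has enough projectives and $\cA^{\mathrm{op}}$ has enough injectives), and the identities $\Ld{i}{(\Fu{G}^{\mathrm{op}})} = (\Rd{i}{\Fu{G}})^{\mathrm{op}}$ and $\Rd{i}{(\Fu{F}^{\mathrm{op}})} = (\Ld{i}{\Fu{F}})^{\mathrm{op}}$ from \ref{derived-functors} make it immediate from \dfnref{Fix-Cofix} that the category of $0$-fixed objects of $\cB^{\mathrm{op}}$ for this adjunction equals $\coFix[]{0}{\cB}^{\mathrm{op}}$. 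Since a kernel of an epimorphism in $\cB^{\mathrm{op}}$ is a cokernel of a monomorphism in $\cB$, part (a) applied to $(\Fu{G}^{\mathrm{op}},\Fu{F}^{\mathrm{op}})$ yields part (b).

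For part (a) I would take a short exact sequence $0 \to A' \to A \to A'' \to 0$ in $\cA$ with $A, A'' \in \Fix[]{0}{\cA}$ and verify conditions \dfnref[]{Fix-Cofix}(i), (ii), (iii) with $\ell=0$ for $A'$, bearing in mind the remark after \dfnref{unit-counit} that then $\eta^0 = \eta$. The long exact sequence of $\Ld{\bullet}{\Fu{F}}$ gives $\Ld{i}{\Fu{F}}(A') = 0$ for all $i \neq 0$, since $\Ld{i}{\Fu{F}}(A) = 0 = \Ld{i+1}{\Fu{F}}(A'')$ for $i > 0$; this is (i), and because $\Ld{1}{\Fu{F}}(A'') = 0$ the functor $\Fu{F}$ carries the given sequence to a short exact sequence $0 \to \Fu{F}A' \to \Fu{F}A \to \Fu{F}A'' \to 0$.

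I would then run the long exact sequence of $\Rd{\bullet}{\Fu{G}}$ on this new short exact sequence. For $i \geqslant 2$ it reads $\Rd{i-1}{\Fu{G}}(\Fu{F}A'') \to \Rd{i}{\Fu{G}}(\Fu{F}A') \to \Rd{i}{\Fu{G}}(\Fu{F}A)$ with both outer terms zero (as $\Fu{F}A$ and $\Fu{F}A''$ are $\Fu{G}$-acyclic, $A$ and $A''$ lying in $\Fix[]{0}{\cA}$), so $\Rd{i}{\Fu{G}}(\Fu{F}A') = 0$; for $i=1$ it gives, via $\Rd{1}{\Fu{G}}(\Fu{F}A) = 0$, an isomorphism $\Rd{1}{\Fu{G}}(\Fu{F}A') \cong \Coker(\Fu{G}\Fu{F}A \to \Fu{G}\Fu{F}A'')$. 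Here naturality of $\eta$ identifies $\Fu{G}\Fu{F}A \to \Fu{G}\Fu{F}A''$, up to the isomorphisms $\eta_A$ and $\eta_{A''}$, with the epimorphism $A \to A''$, so that map is surjective and the cokernel vanishes; this is (ii). Finally, left-exactness of $\Fu{G}$ (together with $\Rd{1}{\Fu{G}}(\Fu{F}A') = 0$) makes $\Fu{G}$ take $0 \to \Fu{F}A' \to \Fu{F}A \to \Fu{F}A'' \to 0$ to a short exact sequence, and the naturality square of $\eta$ along $0 \to A' \to A \to A'' \to 0$, in which the columns over $A$ and $A''$ are isomorphisms, forces the column over $A'$ to be an isomorphism by the five lemma; this is (iii). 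Hence $A' \in \Fix[]{0}{\cA}$.

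I expect the only step that is more than bookkeeping to be the vanishing of $\Rd{1}{\Fu{G}}(\Fu{F}A')$: acyclicity of the neighbouring terms does not suffice, and one must use surjectivity of $\Fu{G}\Fu{F}A \to \Fu{G}\Fu{F}A''$, which is exactly where condition \dfnref[]{Fix-Cofix}(iii) for $A$ and $A''$ (the units $\eta_A$, $\eta_{A''}$ being isomorphisms) enters. Everything else is the long exact sequences of $\Ld{\bullet}{\Fu{F}}$ and $\Rd{\bullet}{\Fu{G}}$, left-exactness of $\Fu{G}$, naturality of the unit, and the five lemma.
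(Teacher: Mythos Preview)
Your proof is correct. The paper's own proof is essentially omitted (it only says ``The closure under direct summands and extensions follows from \lemref{extension}. The remaining assertions are proved by using similar methods''), and what you have written is precisely the natural filling-in of those details via the long exact sequences of $\Ld{\bullet}{\Fu{F}}$ and $\Rd{\bullet}{\Fu{G}}$, naturality of the unit, and the five lemma; your reduction of (b) to (a) by passing to the opposite adjunction is a clean organizational choice (the paper presumably intends one to dualize the argument directly), and it works for the reasons you give.
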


\begin{proof}
  The closure under direct summands and extensions follows from \lemref{extension}. The remaining assertions are proved by using similar methods.
\end{proof}

\begin{lem}
  \label{lem:thick-2}
  For $\ell=0$ the categories from \dfnref{Fix-Cofix} have the following properties:
  \begin{prt}
  \item If the kernel of $\Fu{G}$ is trivial, that is, if $\,\Fu{G}(B)=0$ implies $B=0$ (for any $B \in \cB$), then $\Fix[]{0}{\cA}$ is closed under cokernels of monomorphisms in $\cA$.
  
  \item If the kernel of $\Fu{F}$ is trivial, that is, if $\,\Fu{F}(A)=0$ implies $A=0$ (for any $A \in \cA$), then $\coFix[]{0}{\cB}$ is closed under kernels of epimorphisms in $\cB$.
  \end{prt}
\end{lem}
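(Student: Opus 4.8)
The plan is to prove \prtlbl{a} and to deduce \prtlbl{b} from it by passing to the opposite adjunction $\Fu{G}^{\mathrm{op}}\colon\cB^{\mathrm{op}}\rightleftarrows\cA^{\mathrm{op}}\colon\Fu{F}^{\mathrm{op}}$ (again an instance of \stpref{setup}), which by \ref{derived-functors} interchanges fixed objects with cofixed objects, swaps kernels of epimorphisms with cokernels of monomorphisms, and turns the hypothesis ``$\Ker{\Fu{G}}$ is trivial'' into ``$\Ker{\Fu{F}}$ is trivial''. For \prtlbl{a}, the first step is to unwind \dfnref{Fix-Cofix} when $\ell=0$: since $\Fu{F}$ is a left adjoint it is right exact, so $\Ld{0}{\Fu{F}}=\Fu{F}$; dually $\Rd{0}{\Fu{G}}=\Fu{G}$; and $\eta^0=\eta$ by the remark following \dfnref{unit-counit}. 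Hence $A\in\Fix[]{0}{\cA}$ says precisely that $A$ is $\Fu{F}$-acyclic, that $\Fu{F}(A)$ is $\Fu{G}$-acyclic, and that $\eta_A\colon A\to\Fu{G}\Fu{F}(A)$ is an isomorphism.

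Next I would fix a short exact sequence $0\to A'\xrightarrow{\iota}A\to A''\to 0$ in $\cA$ with $A',A\in\Fix[]{0}{\cA}$ and verify the three conditions for the cokernel $A''$. For \dfnref{Fix-Cofix}(i): the long exact sequence of the functors $\Ld{i}{\Fu{F}}$ gives $\Ld{i}{\Fu{F}}(A'')=0$ for $i\geqslant 2$ at once, together with an exact sequence $0\to\Ld{1}{\Fu{F}}(A'')\to\Fu{F}(A')\xrightarrow{\Fu{F}(\iota)}\Fu{F}(A)$, so it remains to show that $\Fu{F}(\iota)$ is a monomorphism. This is the only place the hypothesis is used: naturality of $\eta$ together with the fact that $\eta_{A'}$ and $\eta_A$ are isomorphisms gives $\Fu{G}\Fu{F}(\iota)=\eta_A\circ\iota\circ\eta_{A'}^{-1}$, which is a monomorphism because $\iota$ is; applying the left exact functor $\Fu{G}$ to $0\to\Ker{(\Fu{F}(\iota))}\to\Fu{F}(A')\xrightarrow{\Fu{F}(\iota)}\Fu{F}(A)$ then shows that $\Fu{G}$ annihilates $\Ker{(\Fu{F}(\iota))}$, so $\Ker{(\Fu{F}(\iota))}=0$ since the kernel of $\Fu{G}$ is trivial. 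Therefore $\Ld{1}{\Fu{F}}(A'')=0$, condition (i) holds for $A''$, and moreover we obtain a short exact sequence $0\to\Fu{F}(A')\to\Fu{F}(A)\to\Fu{F}(A'')\to 0$ in $\cB$.

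The remaining two conditions then follow formally from this sequence. Condition \dfnref{Fix-Cofix}(ii), that $\Fu{F}(A'')$ is $\Fu{G}$-acyclic, comes from the long exact sequence of the $\Rd{i}{\Fu{G}}$ applied to $0\to\Fu{F}(A')\to\Fu{F}(A)\to\Fu{F}(A'')\to 0$, using that $\Fu{F}(A)$ and $\Fu{F}(A')$ are $\Fu{G}$-acyclic. For \dfnref{Fix-Cofix}(iii), apply $\Fu{G}$ to that sequence: $\Fu{G}$-acyclicity of $\Fu{F}(A')$ yields a short exact sequence $0\to\Fu{G}\Fu{F}(A')\to\Fu{G}\Fu{F}(A)\to\Fu{G}\Fu{F}(A'')\to 0$, and naturality of $\eta$ assembles it with the original sequence into a commutative ladder whose first two vertical maps $\eta_{A'}$ and $\eta_A$ are isomorphisms; the five lemma then forces $\eta_{A''}$ to be an isomorphism. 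Hence $A''\in\Fix[]{0}{\cA}$.

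The single genuinely load-bearing step is the verification that $\Fu{F}(\iota)$ is monic, equivalently $\Ld{1}{\Fu{F}}(A'')=0$: this is exactly where ``$\Ker{\Fu{G}}$ is trivial'' is indispensable, and without it the statement is false. Everything else is the routine long-exact-sequence and five-lemma bookkeeping already implicit in the $\ell=0$ case of \thmref{equivalence}.
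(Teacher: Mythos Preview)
Your proof is correct and follows essentially the same approach as the paper's: the key step of showing $\Ld{1}{\Fu{F}}(A'')=0$ via the triviality of $\Ker{\Fu{G}}$ is argued identically (the paper applies $\Fu{G}$ directly to $0\to\Ld{1}{\Fu{F}}(A'')\to\Fu{F}(A')\to\Fu{F}(A)$, which is the same exact sequence you use since $\Ld{1}{\Fu{F}}(A'')=\Ker{\Fu{F}(\iota)}$), and the remaining conditions are then dispatched by long exact sequences and the five lemma, which the paper summarizes as ``arguments as in the proof of \lemref{extension}''. Your reduction of \prtlbl{b} to \prtlbl{a} via the opposite adjunction is a clean formalization of the paper's ``similar to the proof of part (a)''.
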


\begin{proof}
  \proofoftag{a} Let $0 \to A' \to A \to A'' \to 0$ be a short exact sequence in $\cA$ with $A', A \in \Fix[]{0}{\cA}$. Since $\Ld{1}{\Fu{F}}(A)=0$ we obtain the exact sequence \mbox{$0 \to \Ld{1}{\Fu{F}}(A'') \to \Fu{F}(A') \to \Fu{F}(A)$}, and as $\Fu{G}$ is left exact we also get exactness of the sequence $0 \to \Fu{G}(\Ld{1}{\Fu{F}}(A'')) \to \Fu{G}\Fu{F}(A') \to \Fu{G}\Fu{F}(A)$. Since $\eta_{A'}$ and $\eta_{A}$ are isomorphisms, the morphism $\Fu{G}\Fu{F}(A') \to \Fu{G}\Fu{F}(A)$ may be identified with $A' \to A$, which is mono. It follows~that $\Fu{G}(\Ld{1}{\Fu{F}}(A''))=0$, and consequently $\Ld{1}{\Fu{F}}(A'')=0$. Having established this, arguments as in the proof of \lemref{extension} show that $A'' \in \Fix[]{0}{\cA}$.
  
  \proofoftag{b} Similar to the proof of part (a). 
\end{proof}

We give a few examples of adjunctions that satisfy the hypotheses in \lemref{thick-2}.

\begin{exa}
  \label{exa:faithful-adjunction-2}
  Let $R$ be a commutative ring and let $E$ be a faithfully injective $R$-module, that is, the functor $\Hom{R}{-}{E}$ is faithfully exact. In this case, the adjunction $(\Fu{F},\Fu{G}) = (\Hom{R}{-}{E}^\mathrm{op},\Hom{R}{-}{E})$ from \exaref{1} has the property that either of the conditions $\Fu{F}(M)=0$ or $\Fu{G}(M)=0$ imply $M=0$ (for any $R$-module $M$).
\end{exa}

\begin{exa}
  \label{exa:faithful-adjunction-1}
  Let $R$ be a commutative noetherian ring and let $C$ be a finitely generated $R$-module with $\mathrm{Supp}_RC=\mathrm{Spec}\,R$. In this case, the adjunction $(\Fu{F},\Fu{G}) = (C\otimes_R-,\Hom{R}{C}{-})$ from \exaref{2} has the property that either of the conditions $\Fu{F}(M)=0$ or $\Fu{G}(M)=0$ imply $M=0$ (for any $R$-module $M$).  This follows from basic results in commutative algebra; cf.~\cite[\S3.3]{HHlDWh08}.
\end{exa}

\begin{thm}
  \label{thm:equivalence-gen}
  Assume that $\Fu{F}(A)=0$ implies $A=0$ (for any $A \in \cA$). For any $X \in \Fix[]{0}{\cA}$ and $d \geqslant 0$ the equivalence from \thmref{equivalence} with $\ell=0$ restricts to an equivalence:
\begin{displaymath}
  \xymatrix@C=3pc{
    \Fix[]{0}{\cA} \mspace{1mu}\cap\mspace{1mu} \gen{\cA}{d}{X}\, \ar@<0.6ex>[r]^-{\Fu{F}} & \, \coFix[]{0}{\cB} \mspace{1mu}\cap\mspace{1mu} \gen{\cB}{d}{\Fu{F}X} \ar@<0.6ex>[l]^-{\Fu{G}}
  }\!.
\end{displaymath}  
\end{thm}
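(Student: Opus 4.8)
The plan is to check that the two functors $\Fu{F}$ and $\Fu{G}$ underlying the adjoint equivalence of \thmref{equivalence}, taken with $\ell=0$ so that $\Ld{0}{\Fu{F}}=\Fu{F}$ and $\Rd{0}{\Fu{G}}=\Fu{G}$, carry the two intersections appearing in the statement into one another. Granting this, the conclusion is automatic: if a full subcategory is sent into another full subcategory by both halves of an adjoint equivalence, then the unit and counit isomorphisms restrict, and one obtains an adjoint equivalence between the subcategories; here the unit is $\eta^0=\eta$ and the counit is $\varepsilon^0=\varepsilon$.

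First I would record two elementary facts about $X$. Since $X\in\Fix[]{0}{\cA}$ and $\Fix[]{0}{\cA}$ is closed under extensions (\lemref{extension}), it is closed under finite direct sums and under direct summands, so $\add[\cA]{X}\subseteq\Fix[]{0}{\cA}$; applying $\Fu{F}$ and using \thmref{equivalence} gives $\add[\cB]{\Fu{F}X}\subseteq\coFix[]{0}{\cB}$. Moreover, if $W\in\add[\cB]{\Fu{F}X}$, pick $W'$ with $W\oplus W'\cong(\Fu{F}X)^{n}\cong\Fu{F}(X^{n})$; since $X^{n}\in\Fix[]{0}{\cA}$ the unit $\eta_{X^{n}}$ is an isomorphism, so $\Fu{G}(W)$ is a direct summand of $\Fu{G}\Fu{F}(X^{n})\cong X^{n}$, that is, $\Fu{G}(W)\in\add[\cA]{X}$.

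Next, for the inclusion that $\Fu{F}$ sends the left-hand category into the right-hand one, I would take $A\in\Fix[]{0}{\cA}\cap\gen{\cA}{d}{X}$ and fix an exact sequence $X_{d}\to\cdots\to X_{0}\to A\to 0$ with all $X_{i}\in\add[\cA]{X}$. The standard dimension-shift decomposes it into short exact sequences $0\to C_{i+1}\to X_{i}\to C_{i}\to 0$ with $C_{0}=A$. Since $\Fix[]{0}{\cA}$ is closed under kernels of epimorphisms (\lemref{thick-1}(a)), an easy induction gives $C_{i}\in\Fix[]{0}{\cA}$, so $\Ld{1}{\Fu{F}}(C_{i})=0$; applying the right-exact functor $\Fu{F}$ to each of these sequences then yields short exact sequences $0\to\Fu{F}(C_{i+1})\to\Fu{F}(X_{i})\to\Fu{F}(C_{i})\to 0$, which splice to an exact sequence $\Fu{F}(X_{d})\to\cdots\to\Fu{F}(X_{0})\to\Fu{F}(A)\to 0$ with $\Fu{F}(X_{i})\in\add[\cB]{\Fu{F}X}$. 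Hence $\Fu{F}(A)\in\gen{\cB}{d}{\Fu{F}X}$, and $\Fu{F}(A)\in\coFix[]{0}{\cB}$ by \thmref{equivalence}.

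For the reverse inclusion, which is dual and is the only place the standing hypothesis enters, I would take $B\in\coFix[]{0}{\cB}\cap\gen{\cB}{d}{\Fu{F}X}$, fix an exact sequence $Z_{d}\to\cdots\to Z_{0}\to B\to 0$ with $Z_{i}\in\add[\cB]{\Fu{F}X}\subseteq\coFix[]{0}{\cB}$, and decompose it into short exact sequences $0\to D_{i+1}\to Z_{i}\to D_{i}\to 0$ with $D_{0}=B$. Because $\Fu{F}(A)=0$ forces $A=0$, the category $\coFix[]{0}{\cB}$ is closed under kernels of epimorphisms by \lemref{thick-2}(b), so induction gives $D_{i}\in\coFix[]{0}{\cB}$ and hence $\Rd{1}{\Fu{G}}(D_{i})=0$; applying the left-exact functor $\Fu{G}$ then yields short exact sequences $0\to\Fu{G}(D_{i+1})\to\Fu{G}(Z_{i})\to\Fu{G}(D_{i})\to 0$ splicing to an exact sequence $\Fu{G}(Z_{d})\to\cdots\to\Fu{G}(Z_{0})\to\Fu{G}(B)\to 0$. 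By the second elementary fact above each $\Fu{G}(Z_{i})\in\add[\cA]{X}$, so $\Fu{G}(B)\in\gen{\cA}{d}{X}$, while $\Fu{G}(B)\in\Fix[]{0}{\cA}$ by \thmref{equivalence}. With both inclusions established the argument is complete. The main obstacle is precisely the closure of $\coFix[]{0}{\cB}$ under kernels of epimorphisms, which is \lemref{thick-2}(b); this is exactly why one must assume $\Fu{F}$ has trivial kernel, whereas the analogous property of $\Fix[]{0}{\cA}$ used for the forward inclusion is unconditional by \lemref{thick-1}(a).
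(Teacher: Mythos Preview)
Your proof is correct and follows essentially the same approach as the paper's: reduce to showing that $\Fu{F}$ and $\Fu{G}$ respect the two subcategories, break the generating sequence into short exact pieces, and use \lemref{thick-2}(b) for the $\Fu{G}$-direction. The only minor difference is that in the $\Fu{F}$-direction you invoke \lemref{thick-1}(a) to get $C_i\in\Fix[]{0}{\cA}$, whereas the paper simply observes that $\Ld{i}{\Fu{F}}(A)=0=\Ld{i}{\Fu{F}}(X_n)$ for $i>0$ and deduces exactness of the $\Fu{F}$-image directly by dimension-shifting; both routes are valid and nearly identical in spirit.
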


\begin{proof}
  In view of \thmref{equivalence} we only have to argue that $\Fu{F}$ maps $\Fix[]{0}{\cA} \cap \gen{\cA}{d}{X}$ to $\gen{\cB}{d}{\Fu{F}X}$ and that $\Fu{G}$ maps $\coFix[]{0}{\cB} \cap \gen{\cB}{d}{\Fu{F}X}$ to $\gen{\cA}{d}{X}$. 
  
  First assume that $A$ belongs to $\Fix[]{0}{\cA} \cap \gen{\cA}{d}{X}$. Since $A \in \gen{\cA}{d}{X}$ there is an exact sequence $X_d \to \cdots \to X_0 \to A \to 0$ with $X_0,\ldots,X_d \in \add[\cA]{X}$. Since $A, X \in \Fix[]{0}{\cA}$ one has, in particular, $\Ld{i}{\Fu{F}}(A)=0=\Ld{i}{\Fu{F}}(X_n)$ for all $i>0$ and $n=0,\ldots,d$, so it follows that the sequence $\Fu{F}X_d \to \cdots \to \Fu{F}X_0 \to \Fu{F}A \to 0$ is exact, and hence $\Fu{F}A$ belongs to $\gen{\cB}{d}{\Fu{F}X}$.
  
   Next assume that $B$ is in $\coFix[]{0}{\cB} \cap \gen{\cB}{d}{\Fu{F}X}$ and let 
   $Y_d \to \cdots \to Y_0 \to B \to 0$ be an exact sequence in $\cB$ with $Y_0,\ldots,Y_d \in \add[\cB]{\Fu{F}X}$. As $X \in \Fix[]{0}{\cA}$ we have $\Fu{F}X \in \coFix[]{0}{\cB}$ and hence $Y_0,\ldots,Y_d \in \coFix[]{0}{\cB}$. The assumption on $\Fu{F}$ and \lemref{thick-2}(b) imply that $\coFix[]{0}{\cB}$ is closed under kernels of epi\-mor\-phisms in $\cB$, and consequently all the kernels 
$K_0 = \Ker{(Y_0 \twoheadrightarrow B)},\, K_1 = \Ker{(Y_1 \twoheadrightarrow K_0)}, \ldots,\, K_d = \Ker{(Y_d \twoheadrightarrow K_{d-1})}$ belong to $\coFix[]{0}{\cB}$. In particular, one has $\Rd{i}{\Fu{G}}(K_0)=\Rd{i}{\Fu{G}}(K_1)=\cdots = \Rd{i}{\Fu{G}}(K_d)=0$ for all $i>0$, and hence the sequence $\Fu{G}Y_d \to \cdots \to \Fu{G}Y_0 \to \Fu{G}B \to 0$ is exact. As $\Fu{G}\Fu{F}X \cong X$ and $Y_0,\ldots,Y_d \in \add[\cB]{\Fu{F}X}$, it follows that $\Fu{G}Y_0,\ldots,\Fu{G}Y_d \in \add[\cA]{X}$, and thus $\Fu{G}B \in \gen{\cA}{d}{X}$. 
\end{proof}

\enlargethispage{5.8ex}

The following corollary of \thmref{equivalence-gen} is a classic result of Matlis~\cite[Cor.~4.3]{EMt58}.

\begin{cor}[Matlis]
  \label{cor:Matlis}
   Let $(R,\mathfrak{m},k)$ be a commutative noetherian local $\mathfrak{m}$-adically complete ring. There is an adjoint equivalence:
\begin{displaymath} 
  \xymatrix@C=6pc{
    \{ \textnormal{Finitely generated $R$-modules} \}
    \ar@<0.6ex>[r]^-{\Hom{R}{-}{E_R(k)}^\mathrm{op}} 
    & 
    \{ \textnormal{Artinian $R$-modules} \}^{\mathrm{op}}
    \ar@<0.6ex>[l]^-{\Hom{R}{-}{E_R(k)}}    
  }\!.
\end{displaymath}
\end{cor}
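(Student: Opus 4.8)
The plan is to apply \thmref{equivalence-gen} to the adjunction $(\Fu{F},\Fu{G}) = (\Hom{R}{-}{E}^{\mathrm{op}},\Hom{R}{-}{E})$ from \exaref{1} with $\Gamma=\Lambda=R$ and $T=E:=E_R(k)$, so that $\cA = \Mod{R}$ and $\cB = \Mod{R}^{\mathrm{op}}$. Since $E_R(k)$ is a faithfully injective $R$-module, \exaref{faithful-adjunction-2} tells us that the running hypothesis of \thmref{equivalence-gen}, namely $\Fu{F}(A)=0 \Rightarrow A=0$, is satisfied. Moreover $E$ is injective, so $\Ld{i}{\Fu{F}} = \Ext{R}{i}{-}{E}^{\mathrm{op}}$ and $\Rd{i}{\Fu{G}} = \Ext{R}{i}{-}{E}$ both vanish for $i \neq 0$; hence for $\ell=0$ conditions (i), (i$'$), (ii), (ii$'$) of \dfnref{Fix-Cofix} are automatic, and $\Fix[]{0}{\cA} = \coFix[]{0}{\cB}$ is exactly the category of Matlis reflexive $R$-modules, as already observed in \exaref{Matlis1}.

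Next I would take $X = R$ and $d = 0$ in \thmref{equivalence-gen}. One first checks $R \in \Fix[]{0}{\cA}$: by the previous paragraph only condition (iii) is at issue, and the relevant map $\eta^0_R$ is the Matlis biduality homomorphism $R \to \Hom{R}{\Hom{R}{R}{E}}{E} = \Hom{R}{E}{E}$, which is an isomorphism because $\Hom{R}{E}{E}$ is the $\mathfrak{m}$-adic completion of $R$ and $R$ is complete. Thus \thmref{equivalence-gen} yields an adjoint equivalence between $\Fix[]{0}{\cA} \cap \gen{\cA}{0}{R}$ and $\coFix[]{0}{\cB} \cap \gen{\cB}{0}{\Fu{F}R}$, with functors $\Fu{F} = \Hom{R}{-}{E}^{\mathrm{op}}$ and $\Fu{G} = \Hom{R}{-}{E}$.

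It then remains to recognize the two categories. On the source side, $\gen{\Mod{R}}{0}{R}$ is the category of finitely generated $R$-modules by \exaref{fg-Artinian}, and every finitely generated module over a complete local ring is Matlis reflexive; hence $\Fix[]{0}{\cA} \cap \gen{\cA}{0}{R}$ is just $\{\textnormal{finitely generated }R\textnormal{-modules}\}$. On the target side, $\Fu{F}R = \Hom{R}{R}{E}^{\mathrm{op}}$ is the object $E$ regarded in $\cB = \Mod{R}^{\mathrm{op}}$, so $\add[\cB]{\Fu{F}R}$ corresponds to $\add[\Mod{R}]{E}$ and \rmkref{gen-op} gives $\gen{\cB}{0}{\Fu{F}R} = \gen{\Mod{R}^{\mathrm{op}}}{0}{E} = \cogen{\Mod{R}}{0}{E}^{\mathrm{op}}$, which by \exaref{fg-Artinian} equals $\{\textnormal{Artinian }R\textnormal{-modules}\}^{\mathrm{op}}$; since Artinian modules over a complete local ring are again Matlis reflexive, intersecting with $\coFix[]{0}{\cB}$ changes nothing. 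Combining these identifications with the equivalence from \thmref{equivalence-gen} gives precisely the asserted adjoint equivalence.

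The only genuine content beyond this bookkeeping is classical Matlis duality: that $E_R(k)$ is a faithfully injective $R$-module, that $\cogen{\Mod{R}}{0}{E_R(k)}$ is the category of Artinian $R$-modules, and that finitely generated (resp.\ Artinian) $R$-modules over a complete local ring are Matlis reflexive. The first two are recorded in \exaref[Examples~]{faithful-adjunction-2} and \exaref[]{fg-Artinian}, and the reflexivity statements are standard (for finitely generated $M$, apply $\Hom{R}{-}{E}$ twice to a finite free presentation of $M$ and use the five lemma together with reflexivity of $R$; dually for Artinian modules via a finite embedding into a power of $E_R(k)$). So I anticipate no real obstacle here — the whole proof is an exercise in assembling \thmref{equivalence-gen}, \rmkref{gen-op}, and \exaref[Examples~]{fg-Artinian} and \exaref[]{Matlis1} correctly.
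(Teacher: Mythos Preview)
Your proof is correct and follows essentially the same route as the paper: apply \thmref{equivalence-gen} with $X=R$ and $d=0$ to the adjunction of \exaref{Matlis1}, invoke \exaref{faithful-adjunction-2} for the hypothesis, and identify the resulting categories via \rmkref{gen-op} and \exaref{fg-Artinian}. The paper is slightly terser (it cites a reference for the Matlis reflexivity of $R$ rather than sketching it), but the structure is identical.
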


\begin{proof}
  Consider the situation from \exaref{Matlis1}. The assumption that $R$ is $\mathfrak{m}$-adically complete yields that $R$ (viewed as an $R$-module) is Matlis reflexive; see e.g.~\cite[Thm.~3.4.1(8)]{rha}. The asserted equivalence now follows directly from \thmref{equivalence-gen} with $X=R$ and $d=0$ in view of \exaref{faithful-adjunction-2} and of \rmkref{gen-op} and \exaref{fg-Artinian} (first half).
\end{proof}

\begin{thm}
  \label{thm:equivalence-res}
 For any $X \in \Fix[]{0}{\cA}$ and $d \geqslant 0$ the equivalence from \thmref{equivalence} with $\ell=0$ restricts to an equivalence:
\begin{displaymath}
  \xymatrix@C=3pc{
    \Fix[]{0}{\cA} \mspace{1mu}\cap\mspace{1mu} \res{\cA}{d}{X}\, \ar@<0.6ex>[r]^-{\Fu{F}} & \, \res{\cB}{d}{\Fu{F}X} \ar@<0.6ex>[l]^-{\Fu{G}}
  }\!.
\end{displaymath}  
If $\,\Fu{G}(B)=0$ implies $B=0$ (for any $B \in \cB$), then $\res{\cA}{d}{X} \subseteq \Fix[]{0}{\cA}$ and hence the equivalence takes the simpler form $\res{\cA}{d}{X} \leftrightarrows \res{\cB}{d}{\Fu{F}X}$.
\end{thm}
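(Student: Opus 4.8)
The plan is to verify that the adjoint equivalence of \thmref{equivalence} with $\ell=0$ carries the two full subcategories $\Fix[]{0}{\cA}\cap\res{\cA}{d}{X}$ and $\res{\cB}{d}{\Fu{F}X}$ onto one another; once this is known — together with the inclusion $\res{\cB}{d}{\Fu{F}X}\subseteq\coFix[]{0}{\cB}$ — the unit $\eta^0$ and counit $\varepsilon^0$ restrict to natural isomorphisms on these subcategories, and the restricted equivalence follows formally. Throughout I will lean on two elementary facts: first, $\add[\cA]{X}\subseteq\Fix[]{0}{\cA}$ and $\add[\cB]{\Fu{F}X}\subseteq\coFix[]{0}{\cB}$, which hold because both categories are closed under direct summands (\lemref{extension}) and because $\Fu{F}X\in\coFix[]{0}{\cB}$ by \thmref{equivalence}; second, since $X$ is $0$-fixed one has $\Fu{G}\Fu{F}X\cong X$, and hence $\Fu{G}$ sends $\add[\cB]{\Fu{F}X}$ into $\add[\cA]{X}$ while $\Fu{F}$ sends $\add[\cA]{X}$ into $\add[\cB]{\Fu{F}X}$.

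For the functor $\Fu{F}$: given $A\in\Fix[]{0}{\cA}\cap\res{\cA}{d}{X}$, I would fix an exact sequence $0\to X_d\to\cdots\to X_0\to A\to 0$ with each $X_i\in\add[\cA]{X}$, break it into short exact sequences, and apply \lemref{thick-1}(a) (closure of $\Fix[]{0}{\cA}$ under kernels of epimorphisms) repeatedly to see that every syzygy of $A$ in this sequence lies in $\Fix[]{0}{\cA}$, since the $X_i$ and $A$ do. In particular $\Ld{1}{\Fu{F}}$ vanishes on $A$ and on all of these syzygies, so $\Fu{F}$ preserves the exactness of the resolution and yields an $\add[\cB]{\Fu{F}X}$-resolution of $\Fu{F}A$ of length $d$; that is, $\Fu{F}A\in\res{\cB}{d}{\Fu{F}X}$.

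For the functor $\Fu{G}$ I would induct on $d$, proving simultaneously that $\res{\cB}{d}{\Fu{F}X}\subseteq\coFix[]{0}{\cB}$ and that $\Fu{G}$ maps $\res{\cB}{d}{\Fu{F}X}$ into $\Fix[]{0}{\cA}\cap\res{\cA}{d}{X}$. The case $d=0$ is covered by the two elementary facts above. For $d\geqslant 1$, take $B\in\res{\cB}{d}{\Fu{F}X}$ with an $\add[\cB]{\Fu{F}X}$-resolution $0\to Y_d\to\cdots\to Y_0\to B\to 0$; the first syzygy $K=\Ker{(Y_0\twoheadrightarrow B)}$ lies in $\res{\cB}{d-1}{\Fu{F}X}$, hence by induction $K\in\coFix[]{0}{\cB}$ and $\Fu{G}K\in\Fix[]{0}{\cA}\cap\res{\cA}{d-1}{X}$. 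From the short exact sequence $0\to K\to Y_0\to B\to 0$ and \lemref{thick-1}(b) (closure of $\coFix[]{0}{\cB}$ under cokernels of monomorphisms) I get $B\in\coFix[]{0}{\cB}$, hence $\Fu{G}B\in\Fix[]{0}{\cA}$ by \thmref{equivalence}; and applying $\Fu{G}$ to that sequence — which stays exact on the left because $\Rd{1}{\Fu{G}}(K)=0$ — and splicing the result with a length-$(d-1)$ $\add[\cA]{X}$-resolution of $\Fu{G}K$ (using $\Fu{G}Y_0\in\add[\cA]{X}$) produces a length-$d$ one of $\Fu{G}B$. The main obstacle, and the place where the argument must diverge from that of \thmref{equivalence-gen}, is precisely this step: without assuming that the kernel of $\Fu{F}$ is trivial I cannot use closure of $\coFix[]{0}{\cB}$ under kernels of epimorphisms, so I reorganize the resolution via the induction on $d$ in order to invoke only the \emph{unconditional} closure of $\coFix[]{0}{\cB}$ under cokernels of monomorphisms in \lemref{thick-1}(b).

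Finally, for the supplementary claim, if $\Fu{G}(B)=0$ implies $B=0$ then \lemref{thick-2}(a) shows that $\Fix[]{0}{\cA}$ is closed under cokernels of monomorphisms in $\cA$; running the same induction on $d$ (base case $\add[\cA]{X}\subseteq\Fix[]{0}{\cA}$; inductive step: write $C\in\res{\cA}{d}{X}$ as the cokernel of the inclusion of its first syzygy, which lies in $\res{\cA}{d-1}{X}\subseteq\Fix[]{0}{\cA}$ by induction) yields $\res{\cA}{d}{X}\subseteq\Fix[]{0}{\cA}$. Hence $\Fix[]{0}{\cA}\cap\res{\cA}{d}{X}=\res{\cA}{d}{X}$ and the equivalence simplifies to $\res{\cA}{d}{X}\leftrightarrows\res{\cB}{d}{\Fu{F}X}$.
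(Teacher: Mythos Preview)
Your argument is correct and follows essentially the same approach as the paper's proof: both rely on \lemref{thick-1}(b) to obtain $\res{\cB}{d}{\Fu{F}X}\subseteq\coFix[]{0}{\cB}$, on \lemref{thick-1}(a) to control the syzygies on the $\cA$-side, and on \lemref{thick-2}(a) for the supplementary assertion. The only difference is organizational: the paper first establishes the inclusion $\res{\cB}{d}{\Fu{F}X}\subseteq\coFix[]{0}{\cB}$ outright (by iterated application of \lemref{thick-1}(b) from the left end of a finite resolution) and then argues, as in the proof of \thmref{equivalence-gen}, that $\Fu{G}$ preserves exactness of the resolution; you instead bundle both steps into a single induction on $d$, which amounts to the same thing.
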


\begin{proof}
  By \lemref{thick-1}(b) the class $\coFix[]{0}{\cB}$ is closed under cokernels of monomorphisms in $\cB$, and therefore $\res{\cB}{d}{\Fu{F}X} \subseteq \coFix[]{0}{\cB}$. So in view of \thmref{equivalence} we only have to show that $\Fu{F}$ maps $\Fix[]{0}{\cA} \cap \res{\cA}{d}{X}$ to $\res{\cB}{d}{\Fu{F}X}$ and that $\Fu{G}$ maps $\res{\cB}{d}{\Fu{F}X}$~to~$\res{\cA}{d}{X}$. This follows from arguments similar to the ones found in the proof of \thmref{equivalence-gen}. The last assertion follows from \lemref{thick-2}(a).
\end{proof}

The following corollary of \thmref{equivalence-res} is a classic result of Sharp \cite[Thm.~(2.9)]{RYS72}.

\begin{cor}[Sharp]
  \label{cor:Sharp}
   Let $(R,\mathfrak{m},k)$ be a commutative noetherian local Cohen--Macaulay ring with a dualizing module $\upOmega$. There is an adjoint equivalence:
\begin{displaymath}
  \xymatrix@C=5pc{
  {
  \left\{\!\!\!\! 
    \begin{array}{c}
      \textnormal{\normalsize Finitely generated $R$-modules} \\
      \textnormal{\normalsize with finite projective dimension} 
    \end{array}
  \!\!\!\!\right\}
  }
  \ar@<0.8ex>[r]^-{\upOmega\,\otimes_R\,-}
  &
  {
  \left\{\!\!\!\! 
    \begin{array}{c}
      \textnormal{\normalsize Finitely generated $R$-modules} \\
      \textnormal{\normalsize with finite injective dimension} 
    \end{array}
  \!\!\!\!\right\}
  }  \ar@<0.8ex>[l]^-{\Hom{R}{\upOmega}{-}}
  }\!.
\end{displaymath}            
\end{cor}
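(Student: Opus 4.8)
The plan is to apply \thmref{equivalence-res} to the tensor--hom adjunction from \exaref{2}, taken with $\Gamma = R = \Lambda$ and $T = \upOmega$; thus $\cA = \cB = \Mod{R}$, the functors are $\Fu{F} = \upOmega \otimes_R -$ and $\Fu{G} = \Hom{R}{\upOmega}{-}$, and $\Ld{i}{\Fu{F}} = \Tor{R}{i}{\upOmega}{-}$, $\Rd{i}{\Fu{G}} = \Ext{R}{i}{\upOmega}{-}$ by \ref{derived-functors}. Since $\Mod{R}$ has enough projectives and enough injectives, \stpref{setup} is satisfied. I would use \thmref{equivalence-res} with $X = R$ and $d = \dim R$, the Krull dimension of $R$.

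First I would verify that $R \in \Fix[]{0}{\Mod{R}}$. Condition \dfnref[]{Fix-Cofix}(i) holds because $R$ is flat, so $\Tor{R}{i}{\upOmega}{R} = 0$ for $i \neq 0$ while $\Tor{R}{0}{\upOmega}{R} = \upOmega$. Conditions \dfnref[]{Fix-Cofix}(ii) and \dfnref[]{Fix-Cofix}(iii) then amount to $\Ext{R}{i}{\upOmega}{\upOmega} = 0$ for $i \neq 0$ and the homothety morphism $R \to \Hom{R}{\upOmega}{\upOmega}$ being an isomorphism; both are standard properties of a dualizing module (equivalently, $\upOmega$ is semidualizing). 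Since $\Fu{F}X = \upOmega \otimes_R R \cong \upOmega$, \thmref{equivalence-res} then yields an equivalence
\[
  \Fix[]{0}{\Mod{R}} \,\cap\, \res{\Mod{R}}{d}{R} \;\leftrightarrows\; \res{\Mod{R}}{d}{\upOmega},
\]
implemented by $\Fu{F} = \upOmega \otimes_R -$ and $\Fu{G} = \Hom{R}{\upOmega}{-}$.

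To remove the intersection on the left I would invoke the final assertion of \thmref{equivalence-res}: it suffices that $\Fu{G}(M) = \Hom{R}{\upOmega}{M} = 0$ force $M = 0$. As $\upOmega$ is finitely generated with $\mathrm{Supp}_R\upOmega = \mathrm{Spec}\,R$, this is exactly the property recorded in \exaref{faithful-adjunction-1} applied to $C = \upOmega$. Hence $\res{\Mod{R}}{d}{R} \subseteq \Fix[]{0}{\Mod{R}}$, and the equivalence collapses to $\res{\Mod{R}}{d}{R} \leftrightarrows \res{\Mod{R}}{d}{\upOmega}$. Finally, \exaref{fg-Artinian} identifies $\res{\Mod{R}}{d}{R}$ with the finitely generated $R$-modules of finite projective dimension and $\res{\Mod{R}}{d}{\upOmega}$ with those of finite injective dimension, which is the asserted adjoint equivalence.

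The argument is essentially bookkeeping once the inputs are in place; the only steps carrying genuine content are the verification $R \in \Fix[]{0}{\Mod{R}}$ (i.e.\ the semidualizing properties of $\upOmega$) and the translation of the $\mathsf{res}$-categories into projective/injective dimension categories via \exaref{fg-Artinian} --- the injective side of which rests on maximal Cohen--Macaulay approximations. Beyond that, the main thing to be careful about is matching the two functors to the two directions of the equivalence, which causes no difficulty here.
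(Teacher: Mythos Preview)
Your proof is correct and follows exactly the route the paper takes: the paper's one-line proof reads ``Immediate from \exaref{faithful-adjunction-1}, \thmref{equivalence-res} with $X=R$, and \exaref{fg-Artinian},'' and you have simply unpacked each of these three ingredients in the same order and with the same roles. The only addition is your explicit verification that $R \in \Fix[]{0}{\Mod{R}}$ via the semidualizing properties of $\upOmega$, which is a necessary hypothesis of \thmref{equivalence-res} that the paper leaves implicit.
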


\begin{proof}
  Immediate from \exaref{faithful-adjunction-1}, \thmref{equivalence-res} with $X=R$, and \exaref{fg-Artinian}.
\end{proof}

\section{Applications to relative Cohen--Macaulay modules}
\label{sec:CM}

Throughout this section, $(R,\mathfrak{m},k)$ is a commutative noetherian local ring and $\mathfrak{a} \subset R$ is a proper ideal. We apply \thmref{equivalence} to study the category of (not necessarily finitely generated) relative Cohen--Macaulay modules. Our main result is \thmref{CM2}.

We begin by recalling a few well-known defintions and facts about local (co)homology. 

\begin{ipg}
\label{lc-definitions}
The \emph{$\mathfrak{a}$-torsion functor} and the \emph{$\mathfrak{a}$-adic completion  functor} are defined by 
\begin{displaymath}
  \G{} = \textstyle\varinjlim_{n \in \mathbb{N}} \Hom{R}{R/\mathfrak{a}^n}{-}
  \quad \text{and} \quad
  \L{} = \varprojlim_{n \in \mathbb{N}}(R/\mathfrak{a}^n \otimes_R -)\;.
\end{displaymath}
The $i^\mathrm{th}$ right derived functor of $\G{}$ is written $\lce{i}$ and called the \emph{$i^\mathrm{th}$ local cohomology w.r.t.~$\mathfrak{a}$}. The $i^\mathrm{th}$ left derived functor of $\L{}$ is written $\lhe{i}$ and called the \emph{$i^\mathrm{th}$ local homology w.r.t.~$\mathfrak{a}$}.

  The functor $\L{}$ is \textsl{not} right exact on the category of all $R$-modules, so its zeroth left derived functor $\lhe{0}$ is in general \textsl{not} naturally isomorphic to $\L{}$. For every $R$-module $M$ there are canonical homomorphisms $\psi_M \colon M \to \lh{0}{M}$ and $\varphi_M \colon \lh{0}{M} \twoheadrightarrow \L{M}$ whose composite $\varphi_M\circ\psi_M$ is the $\mathfrak{a}$-adic completion map \mbox{$\tau_M \colon M \to \L{M}$}; see Simon \cite[\S5.1]{AMS90}. On the category of finitely generated $R$-modules, the functor $\L{}$ is exact, as it is naturally isomorphic to  \smash{$-\otimes_R\widehat{R}^{\mathfrak{a}}$}; see \cite[Thms.~8.7 and 8.8]{Mat}. Hence, if $M$ is a finitely generated $R$-module, $\varphi_M$ is an isomorphism, $\psi_M$ may be identified with $\tau_M$, and $\lh{i}{M}=0$ for $i>0$.

On the derived category $\D{R}$ one can consider the total right derived functor $\RG$~of~$\G{}$. A classic result due to Grothendieck \cite[Prop.~1.4.1]{EGA3}\footnote{\,See also Brodmann and Sharp \cite[Thm.~5.1.19]{BroSha}, Alonso Tarr{\'{\i}}o, Jerem{\'{\i}}as L{\'o}pez, and Lipman \cite[Lem.~3.1.1]{AJL-97} (with corrections by Schenzel \cite{PSc03}), and Porta, Shaul, and Yekutieli \cite[Prop.~5.8]{PSY}.} asserts that $\RG \cong \C \otimes^\mathbf{L}_R-$, where $\C$ is the {\v C}ech complex on any set of generators of $\mathfrak{a}$. Similarly, $\LL \cong \RHom{R}{\C}{-}$ by Greenlees and
May \cite[Sect.~2]{JPGJPM92} (with corrections by Schenzel \cite{PSc03})\footnote{\,See also  Porta, Shaul, and Yekutieli \cite[Cor.~7.13]{PSY} for a very clear exposition.}. For any $R$-module $M$ one has by definition $\lc{i}{M} = \mathrm{H}_{-i}(\RG{M})$ and $\lh{i}{M} = \mathrm{H}_{i}(\LL{M})$.
\end{ipg}
  
\begin{ipg}
  \label{lc-facts}
Recall that for any $R$-module $M$, its \emph{depth (or grade) w.r.t.~$\mathfrak{a}$} is the number 
\begin{displaymath}
  \mathrm{depth}_R(\mathfrak{a},M) = \inf\{i \,|\, \Ext{R}{i}{R/\mathfrak{a}}{M} \neq 0\} \in \mathbb{N}_0 \cup \{\infty\}\;.
\end{displaymath}
If $M$ is finitely generated, then this number is the common length all maximal $M$-sequences contained in $\mathfrak{a}$; see \cite[\S1.2]{BruHer}. Strooker \cite[Prop.~5.3.15]{Strooker} shows that for every $M$ one has:
  \begin{displaymath}
     \inf\{i\,|\,\lc{i}{M}\neq 0\} = \mathrm{depth}_R(\mathfrak{a},M)\;.
  \end{displaymath}  
  Thus, if $M$ is finitely generated, then $\inf\{i\,|\,\lc[\mathfrak{m}]{i}{M}\neq 0\} = \mathrm{depth}_RM$.
 
The number $\sup\{i\,|\,\lc{i}{M}\neq 0\}$ is less well understood; it is often called the \emph{cohomological dimension} of $M$ w.r.t.~$\mathfrak{a}$ and denoted by $\mathrm{cd}_R(\mathfrak{a},M)$. If $M$ is finitely generated, then 
$\mathrm{cd}_R(\mathfrak{m},M) = \mathrm{dim}_RM$ by \cite[Thms.~6.1.2 and 6.1.4]{BroSha}.
\end{ipg}

From \ref{lc-facts} one gets the well-known fact that a (non-zero) finitely generated module $M$ is Cohen--Macaulay with $t=\mathrm{depth}_RM=\mathrm{dim}_RM$ if and only if $\lc[\mathfrak{m}]{i}{M}=0$ for all $i \neq t$. In view of this, the next definition due to Zargar \cite[Def.~2.1]{Zargar} is natural.

\begin{dfn}[Zargar]
  \label{dfn:Zargar}
A finitely generated $R$-module $M$ is said to be \emph{relative Cohen--Macaulay of cohomological dimension $t$ w.r.t.~$\mathfrak{a}$} if one has if $\lc{i}{M}=0$ for all $i \neq t$. 
\end{dfn}

The ring $R$ is said to be \emph{relative Cohen--Macaulay w.r.t.~$\mathfrak{a}$} if it is so when viewed as a module over itself, that is, if 
$c(\mathfrak{a}):=\mathrm{grade}_R(\mathfrak{a},R) = \mathrm{cd}_R(\mathfrak{a},R)$. In the terminology of Hellus and Schenzel \cite{HellusSchenzel}, this means that $\mathfrak{a}$ is a \emph{cohomologically complete intersection ideal}. 

\begin{exa}
  Let $x_1,\ldots,x_n \in R$ be a sequence of elements. It follows from \cite[Thm.~3.3.1]{BroSha} (and \ref{lc-facts}) that any finitely generated $R$-module $M$ for which $x_1,\ldots,x_n$ is an $M$-sequence is relative Cohen--Macaulay of cohomological dimension $n$ with respect to $\mathfrak{a}=(x_1,\ldots,x_n)$. In particular, if $x_1,\ldots,x_n$ is an $R$-sequence, then $R$ is relative Cohen--Macaulay with respect to $\mathfrak{a}=(x_1,\ldots,x_n)$ and one has $c(\mathfrak{a})=n$.
\end{exa}

For a ring $R$ that is relative Cohen--Macaulay w.r.t.~$\mathfrak{a}$ we now set out to study the category
\begin{displaymath}
  \{M \in \mod{R} \,|\, \lc{i}{M}=0 \textnormal{ for all } i \neq t \}
  \quad \textnormal{(for any $t$)}
\end{displaymath}
of finitely generated relative Cohen--Macaulay of cohomological dimension $t$ w.r.t.~$\mathfrak{a}$. But first we extend the notion of relative Cohen--Macaulayness to the realm of all modules. 

\begin{dfn}
  An $R$-module $M$ is said to be \emph{$\mathfrak{a}$-trivial} if $\lh{i}{M}=0$ for all $i \in \mathbb{Z}$.
\end{dfn}

\begin{rmk}
  By Strooker \cite[Prop.~5.3.15]{Strooker} and Simon \cite[Thm.~2.4 and Cor.~p.~970 part (ii)]{Simon92} $\mathfrak{a}$-trivialness of a module $M$ is equivalent to any of the conditions: (i) $\lc{i}{M}=0$ for all $i \in \mathbb{Z}$. \ (ii) $\Ext{R}{i}{R/\mathfrak{a}}{M}=0$ for all $i \in \mathbb{Z}$. \ (iii) $\Tor{R}{i}{R/\mathfrak{a}}{M}=0$ for all $i \in \mathbb{Z}$.
\end{rmk}

We denote the Matlis duality functor $\Hom{R}{-}{E_R(k)}$ by $(-)^v$, and for an $R$-module $M$ we write $\delta_M \colon M \to M^{vv}$ for the canonical monomorphism. 

\begin{dfn}
  \label{dfn:CM}
An $R$-module $M$ (not necessarily finitely generated) is said to be \emph{relative Cohen--Macaulay of cohomological dimension $t$ w.r.t.~$\mathfrak{a}$} if it satisfies the conditions:
\begin{prt}
\item[\hspace{2ex} \con{CM1}] $\lc{i}{M}=0$ for all $i \neq t$.
\item[\hspace{2ex} \con{CM2}] The canonical map $\psi_M \colon M \to \lh{0}{M}$ is an isomorphism.
\item[\hspace{2ex} \con{CM3}] The cokernel of $\delta_M \colon M \to M^{vv}$ is $\mathfrak{a}$-trivial.
\end{prt}
The category of all such $R$-modules is denoted $\CM{t}{R}$. 
\end{dfn}

\begin{obs}
  \label{obs:fg}
  Assume that $R$ is $\mathfrak{m}$-adically complete, and hence also $\mathfrak{a}$-adically complete by \cite[Cor.~2.2.6]{Strooker}. In this case, conditions \con{CM2} and \con{CM3} automatically hold for all finitely generated $R$-modules, see \ref{lc-definitions} and \cite[Thm.~3.4.1(8)]{rha}, so there is an equality,
\begin{displaymath}
  \CM{t}{R} \,\cap\, \mod{R} \,=\, \{M \in \mod{R} \,|\, \lc{i}{M}=0 \textnormal{ for all } i \neq t \}\;.
\end{displaymath}
Thus, in this case, a finitely generated module is relative Cohen--Macaulay w.r.t.~$\mathfrak{a}$ in the sense of \dfnref{CM} if and only if it is so in the sense of Zargar (\dfnref{Zargar}).
\end{obs}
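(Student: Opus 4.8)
The plan is to verify the claimed equality
\begin{displaymath}
  \CM{t}{R} \,\cap\, \mod{R} \,=\, \{M \in \mod{R} \,|\, \lc{i}{M}=0 \textnormal{ for all } i \neq t \}
\end{displaymath}
by checking, for a finitely generated $R$-module $M$, that conditions \con{CM2} and \con{CM3} in \dfnref{CM} hold automatically, so that membership in $\CM{t}{R}$ reduces to condition \con{CM1}, which is precisely Zargar's condition in \dfnref{Zargar}. The inclusion ``$\subseteq$'' is then trivial, and the reverse inclusion ``$\supseteq$'' amounts exactly to this automatic vanishing. The hypothesis that $R$ is $\mathfrak{m}$-adically complete is used to pass to $\mathfrak{a}$-adic completeness via \cite[Cor.~2.2.6]{Strooker}, which is the input that makes the two conditions free.

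For \con{CM2}: by \ref{lc-definitions}, for a finitely generated $R$-module $M$ the completion functor $\L{}$ is exact, $\lh{i}{M}=0$ for $i>0$, and the canonical map $\varphi_M\colon\lh{0}{M}\to\L{M}$ is an isomorphism while $\psi_M$ is identified with the $\mathfrak{a}$-adic completion map $\tau_M\colon M\to\L{M}$. Since $R$ is $\mathfrak{a}$-adically complete and $M$ is finitely generated, $M$ is itself $\mathfrak{a}$-adically complete (a finitely generated module over a complete ring is complete in the relevant topology), so $\tau_M$ is an isomorphism; hence $\psi_M$ is an isomorphism, giving \con{CM2}. For \con{CM3}: because $R$ is $\mathfrak{m}$-adically complete, every finitely generated $R$-module is Matlis reflexive, i.e.\ $\delta_M\colon M\to M^{vv}$ is an isomorphism by \cite[Thm.~3.4.1(8)]{rha}; thus $\Coker{\delta_M}=0$, and the zero module is $\mathfrak{a}$-trivial (all its local homology modules vanish), so \con{CM3} holds. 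Combining these two points shows that for finitely generated $M$, the conditions \con{CM1}--\con{CM3} collapse to \con{CM1} alone, which establishes both inclusions.

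Finally, the concluding sentence of the statement---that a finitely generated module is relative Cohen--Macaulay w.r.t.~$\mathfrak{a}$ in the sense of \dfnref{CM} if and only if it is so in the sense of Zargar---is then immediate, since Zargar's \dfnref{Zargar} is literally the requirement $\lc{i}{M}=0$ for all $i\neq t$, i.e.\ condition \con{CM1}, which by the displayed equality is exactly what membership in $\CM{t}{R}$ demands of a finitely generated module. I do not anticipate a genuine obstacle here; the only point requiring a little care is to invoke the correct completeness passage (from $\mathfrak{m}$-adic to $\mathfrak{a}$-adic completeness via \cite[Cor.~2.2.6]{Strooker}) before quoting the structural facts about $\L{}$ and about Matlis reflexivity, since both of those facts are what genuinely need $R$ (hence $M$) to be complete.
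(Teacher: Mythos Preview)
Your proposal is correct and follows exactly the approach the paper indicates: the observation is stated with inline justification (the references to \ref{lc-definitions} for \con{CM2} and to \cite[Thm.~3.4.1(8)]{rha} for \con{CM3}), and you have simply unpacked those references in the intended way. There is nothing to add.
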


\begin{exa}
  \label{exa:CM-0}
  For $\mathfrak{a}=0$ we have $\G{} = \Fu{Id}_{\Mod{R}}=\L{}$, and the only $\mathfrak{a}$-trivial module is the zero module. Thus, for $\mathfrak{a}=0$ one has \smash{$\CM{0}{R} =  \{ \textnormal{Matlis reflexive $R$-modules} \}$}.
\end{exa}

\begin{lem}
\label{lem:Hc}
Assume that $R$ is relative Cohen--Macaulay w.r.t.~$\mathfrak{a}$ in the sense of \dfnref{Zargar} and set $c=c(\mathfrak{a})$. In this case, the $R$-module $\lc{c}{R}$ has the following properties:
\begin{prt}   
\item $\lc{c}{R}$ has finite projective dimension.
\item $\Ext{R}{i}{\lc{c}{R}}{\lc{c}{R}}=0$ for all $i>0$.
\item $\Hom{R}{\lc{c}{R}}{\lc{c}{R}}$ is isomorphic to the $\mathfrak{a}$-adic completion \smash{$\widehat{R}^{\mathfrak{a}}$}.
\item There are isomorphisms \smash{$\RG{} \cong \upSigma^{-c}(\lc{c}{R} \otimes^\mathbf{L}_R-)$} and \smash{$\lce{i} \cong \Tor{R}{c-i}{\lc{c}{R}}{-}$}.
\item There are isomorphisms \smash{$\LL{} \cong \upSigma^{c}\RHom{R}{\lc{c}{R}}{-}$} and \smash{$\lhe{i} \cong \Ext{R}{c-i}{\lc{c}{R}}{-}$}.
\end{prt}
\end{lem}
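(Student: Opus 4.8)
The plan is to derive all five items from a single observation: in the derived category $\D{R}$, the {\v C}ech complex $\C$ on a generating sequence of $\mathfrak{a}$ is isomorphic to a translate of $H:=\lc{c}{R}$. Indeed, recall from \ref{lc-definitions} that $\RG{}\cong\C\otimes^{\mathbf{L}}_R-$, whence $\RG{R}\cong\C$ and $\mathrm{H}_{-i}(\C)\cong\lc{i}{R}$ for every $i$. Since $R$ is relative Cohen--Macaulay w.r.t.~$\mathfrak{a}$, this is $0$ for $i\neq c$ and is $H$ for $i=c$, so the homology of $\C$ is concentrated in degree $-c$, and therefore $\C\cong\upSigma^{-c}H$ in $\D{R}$.

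Granting this, parts (d) and (e) are purely formal. Applying the functor $-\otimes^{\mathbf{L}}_RM$ to $\C\cong\upSigma^{-c}H$ and using $\RG{}\cong\C\otimes^{\mathbf{L}}_R-$ gives $\RG{M}\cong\upSigma^{-c}(H\otimes^{\mathbf{L}}_RM)$, naturally in $M$, and taking $\mathrm{H}_{-i}$ yields $\lc{i}{M}\cong\Tor{R}{c-i}{H}{M}$; applying $\RHom{R}{-}{M}$ and using $\LL{}\cong\RHom{R}{\C}{-}$ (also from \ref{lc-definitions}) gives $\LL{M}\cong\upSigma^{c}\RHom{R}{H}{M}$, and taking $\mathrm{H}_i$ yields $\lh{i}{M}\cong\Ext{R}{c-i}{H}{M}$. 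For (a) I would note that each nonzero term of $\C$ is either $R$ or a localization $R_y$ at a single element, and such a localization admits a free resolution $0\to R^{(\mathbb{N}_0)}\to R^{(\mathbb{N}_0)}\to R_y\to 0$, so $\pd{R}{R_y}\leqslant 1$; hence $\C$ is a bounded complex of modules of finite projective dimension, and consequently $\upSigma^{c}\C\cong H$ has finite projective dimension.

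For (b) and (c) the idea is to identify $\RHom{R}{H}{H}$ with the local homology of $R$ itself. Using (d) with $M=R$ to write $H\cong\upSigma^{c}\RG{R}$ in $\D{R}$, the derived tensor--hom adjunction $\RHom{R}{\RG{M}}{N}\cong\RHom{R}{M}{\LL{N}}$ (immediate from $\RG{}\cong\C\otimes^{\mathbf{L}}_R-$, $\LL{}\cong\RHom{R}{\C}{-}$ and tensor--hom adjunction), and the idempotence $\LL{\RG{N}}\cong\LL{N}$ from the MGM equivalence (see \cite[Sect.~7]{PSY} and \cite[Sect.~8]{VyasYekutieli}), one computes, in $\D{R}$,
\begin{displaymath}
  \RHom{R}{H}{H}\,\cong\,\RHom{R}{\RG{R}}{\RG{R}}\,\cong\,\RHom{R}{R}{\LL{\RG{R}}}\,\cong\,\LL{R}\,,
\end{displaymath}
where the first isomorphism uses that the translations in the two arguments cancel. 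Passing to homology gives $\Ext{R}{i}{H}{H}\cong\lh{-i}{R}$ for every $i$; this is $0$ for $i>0$ because local homology is a left derived functor, which proves (b), and for $i=0$ it gives $\Hom{R}{H}{H}\cong\lh{0}{R}\cong\L{R}=\widehat{R}^{\mathfrak{a}}$, where the middle isomorphism holds because $R$ is a finitely generated $R$-module so that $\varphi_R$ is an isomorphism by \ref{lc-definitions}; this proves (c).

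The only genuinely non-formal input in this plan is the idempotence $\LL{\RG{N}}\cong\LL{N}$ used in the last step, so that is the place where I would be careful to pin down the precise reference; the rest is bookkeeping with the isomorphism $\C\cong\upSigma^{-c}H$ and with translations. (A cheaper argument for (a), avoiding the explicit resolution of $R_y$, is to quote that over a noetherian local ring a module of finite flat dimension has finite projective dimension, applied to $H$, which has finite flat dimension because $\C$ is a bounded complex of flat modules.)
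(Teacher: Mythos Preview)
Your proof is correct and follows essentially the same approach as the paper: both arguments pivot on the single observation that the relative Cohen--Macaulay hypothesis forces $\C\cong\upSigma^{-c}\lc{c}{R}$ in $\D{R}$, from which (a), (d), (e) drop out immediately and (b), (c) follow by computing $\RHom{R}{\lc{c}{R}}{\lc{c}{R}}$. The only cosmetic difference is that the paper outsources (a) to \cite[\S5.8]{CFH-06} and (b), (c) to \cite[Lem.~1.9]{AFrPJr04}, whereas you spell these out directly via the telescope resolution of $R_y$ and the adjunction/MGM idempotence; your versions are fine and arguably more transparent.
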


\begin{proof}
  Since $\lc{i}{R} \cong \mathrm{H}_{-i}(\RG{(R)}) \cong \mathrm{H}_{-i}(\C)$ by \ref{lc-definitions}, the assumption that $R$ is relative Cohen--Macaulay w.r.t.~$\mathfrak{a}$ means that the homology of $\C$ is concentrated in degree $-c$. Thus there are isomorphisms $\lc{c}{R} \cong \operatorname{H}_{-c}(\C) \cong \upSigma^{c}\C$ in $\D{R}$. In view of this, part (a) follows since $\C$ has finite projective dimension, see \cite[\S5.8]{CFH-06}, parts (b) and (c) follow from \cite[Lem.~1.9]{AFrPJr04}, and (d) and (e) follow from~\ref{lc-definitions}.~\qedhere
\end{proof}

\begin{dfn}
  \label{dfn:theta}
Naturality of $\psi$ from \ref{lc-definitions} shows that 
for any $R$-module $M$ there is an equality $\psi_{M^{vv}} \circ \delta_M = \lh{0}{\delta_M} \circ \psi_M$ of homomorphisms $M \to \lh{0}{M^{vv}}$; we write $\theta_M$ for this map.
\end{dfn}

\begin{lem}
  \label{lem:theta}
  An $R$-module $M$ satisfies \con{CM2} and \con{CM3} in \dfnref{CM} if and only if
  \begin{prt}
  \item[\textnormal{($\dagger$)}] $\lh{i}{M^{vv}}=0$ for all $i>0$, and
  \item[\textnormal{($\ddagger$)}] $\theta_M \colon M \to \lh{0}{M^{vv}}$ is an isomorphism.
  \end{prt}  
\end{lem}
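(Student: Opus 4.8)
The plan is to run everything off the canonical short exact sequence
\[
  0 \longrightarrow M \xrightarrow{\delta_M} M^{vv} \longrightarrow C \longrightarrow 0, \qquad C := \Coker{\delta_M},
\]
which is exact because $\delta_M$ is a monomorphism. The two main tools are the long exact sequence in local homology attached to this sequence and the naturality of $\psi$, which—applied to $\delta_M$—is precisely the commutative square expressing $\theta_M = \psi_{M^{vv}}\circ\delta_M = \lh{0}{\delta_M}\circ\psi_M$ of \dfnref{theta}. It is also worth recording at the outset that, since each $\lhe{i}$ is a left derived functor, a module is $\mathfrak{a}$-trivial as soon as its local homology vanishes in all degrees $\geqslant 0$.

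For the implication from \con{CM2} and \con{CM3} to $(\dagger)$ and $(\ddagger)$: condition \con{CM3} says that $C$ is $\mathfrak{a}$-trivial, so $\lh{i}{C}=0$ for every $i$, and feeding this into the long exact sequence forces $\lh{i}{\delta_M}\colon\lh{i}{M}\to\lh{i}{M^{vv}}$ to be an isomorphism for every $i$. Taking $i=0$ and composing with the isomorphism $\psi_M$ supplied by \con{CM2} shows that $\theta_M = \lh{0}{\delta_M}\circ\psi_M$ is an isomorphism, which is $(\ddagger)$; and the isomorphisms $\lh{i}{M}\cong\lh{i}{M^{vv}}$ reduce $(\dagger)$ to the assertion that $\lh{i}{M}=0$ for $i>0$. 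This last point I would deduce from \con{CM2} alone: over a noetherian ring, if $\psi_M$ is an isomorphism then $M$ is derived $\mathfrak{a}$-adically complete, and in particular $\lh{i}{M}=0$ for all $i>0$. This appeal to completion theory (Simon) is the one step that is not pure diagram chasing, and I expect it to be where the real work lies.

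For the converse, assume $(\dagger)$ and $(\ddagger)$. From $(\ddagger)$ the map $\theta_M = \psi_{M^{vv}}\circ\delta_M$ is invertible, so $\theta_M^{-1}\circ\psi_{M^{vv}}$ is a left inverse of $\delta_M$; hence $\delta_M$ is a split monomorphism and we may write $M^{vv} = M\oplus C$, with $\delta_M$ the inclusion of the first summand and $C$ the cokernel. Applying functoriality of $\psi$ and of the functors $\lhe{i}$ to this inclusion and to the complementary projection onto $C$, the decomposition is compatible with $\psi$ and with local homology: $\psi_{M^{vv}} = \psi_M\oplus\psi_C$ and $\lh{i}{M^{vv}} = \lh{i}{M}\oplus\lh{i}{C}$ for all $i$. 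Under these identifications $\theta_M = \psi_{M^{vv}}\circ\delta_M$ is the map $m\mapsto(\psi_M(m),0)$ into $\lh{0}{M}\oplus\lh{0}{C}$, so its being an isomorphism forces $\psi_M$ to be an isomorphism—this is \con{CM2}—and forces $\lh{0}{C}=0$. Finally $(\dagger)$ gives $\lh{i}{C}=0$ for $i>0$, because $\lh{i}{C}$ is a direct summand of $\lh{i}{M^{vv}}=0$; together with $\lh{0}{C}=0$ and the automatic vanishing in negative degrees this says $C$ is $\mathfrak{a}$-trivial, i.e.\ \con{CM3} holds.

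The two points to watch are that the direct-sum decomposition used in the converse genuinely respects $\psi$ and the functors $\lhe{i}$—which is just functoriality applied to the split inclusion $\delta_M$ and its complementary projection—and that ``$\mathfrak{a}$-trivial'' only needs checking in degrees $\geqslant 0$, so that $(\dagger)$ together with $\lh{0}{C}=0$ suffices.
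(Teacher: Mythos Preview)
Your argument is correct. The forward implication is essentially identical to the paper's: both use \con{CM3} in the long exact sequence to make each $\lh{i}{\delta_M}$ an isomorphism, and both invoke Simon's result to pass from \con{CM2} to the vanishing $\lh{i}{M}=0$ for $i>0$.

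For the converse you take a genuinely different route. The paper applies Simon's lemma a second time: from ($\ddagger$) one has $M\cong\lh{0}{M^{vv}}$, so $M$ is of the form $\lh{0}{X}$, whence $\psi_M$ is an isomorphism and $\lh{i}{M}=0$ for $i>0$; then $\lh{0}{\delta_M}=\theta_M\circ\psi_M^{-1}$ is an isomorphism, and the long exact sequence together with ($\dagger$) forces $C$ to be $\mathfrak{a}$-trivial. You instead observe that ($\ddagger$) makes $\delta_M$ a split monomorphism (with retraction $\theta_M^{-1}\circ\psi_{M^{vv}}$), decompose $M^{vv}\cong M\oplus C$, and read off \con{CM2} and $\lh{0}{C}=0$ directly from the shape of $\theta_M$ under this splitting. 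Your approach is more elementary in that it avoids the second appeal to Simon and replaces the long exact sequence computation by the trivial observation that summands of zero are zero; the paper's approach, on the other hand, does not introduce an auxiliary splitting and keeps the argument symmetric between the two directions.
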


\begin{proof}
  ``Only if'': By \con{CM2} and \cite[p.~238, second Lem., part (ii)]{AMS90} we get isomorphisms $\lh{i}{M} \cong \lh{i}{\lh{0}{M}} = 0$ for all $i>0$. The exact sequence \mbox{$0 \to M \to M^{vv} \to C_M \to 0$}, where the map from $M$ to $M^{vv}$ is $\delta_M$ and $C_M=\Coker{\delta_M}$, induces a long exact sequence of local homology modules w.r.t~$\mathfrak{a}$, and since $C_M$ is  $\mathfrak{a}$-trivial by \con{CM3}, we conclude that $\lh{i}{\delta_M} \colon \lh{i}{M} \to \lh{i}{M^{vv}}$ is an isomorphism for all $i \in \mathbb{Z}$. Thus ($\dagger$) follows. As $\lh{0}{\delta_M}$ is an isomorphism, so is $\theta_M=\lh{0}{\delta_M} \circ \psi_M$, that is, ($\ddagger$) holds. 

``If'': As ($\ddagger$) holds, $M$ has the form $M \cong \lh{0}{X}$ so \cite[p.~238, second Lem., part~(ii)]{AMS90} yields that $\psi_M \colon M \to \lh{0}{M}$ is an isomorphism, i.e.~\con{CM2} holds, and $\lh{i}{M}=0$ for~$i>0$. As $\theta_M=\lh{0}{\delta_M} \circ \psi_M$ and $\psi_M$ are both isomorphisms, so is $\lh{0}{\delta_M}$. By ($\dagger$) we also have $\lh{i}{M^{vv}}=0$ for all $i>0$, so the long exact sequence of local homology modules induced by \mbox{$0 \to M \to M^{vv} \to C_M \to 0$} shows that $\lh{i}{C_M}=0$ for all $i \in \mathbb{Z}$, i.e.~\con{CM3} holds.
\end{proof}

We prove in \thmref{CM2} below that the category $\CM{t}{R}$ is self-dual. The duality~is realized via the following module which was already introduced by Zargar \cite[Def.~2.3]{ZargarDuality}.

\begin{dfn}[Zargar]
  \label{dfn:Omega}
  Let $R$ be relative Cohen--Macaulay w.r.t.~$\mathfrak{a}$ in the sense of \dfnref{Zargar}. With $c = c(\mathfrak{a})$ we set $\upOmega_\mathfrak{a} = \lc{c}{R}^v = \Hom{R}{\lc{c}{R}}{E_R(k)}$.
\end{dfn}

In the extreme cases $\mathfrak{a}=0$ and $\mathfrak{a}=\mathfrak{m}$ the module $\upOmega_\mathfrak{a}$ is well-understood:

\begin{exa}
  \label{exa:Omega}
  Any ring $R$ is relative Cohen--Macaulay w.r.t.~$\mathfrak{a}=0$; in this case one has $c=0$, $\lc{0}{R}=R$, and $\upOmega_\mathfrak{a} = E_R(k)$.
  
  Assume that $R$ is Cohen--Macaulay (w.r.t.~$\mathfrak{m}$) and $\mathfrak{m}$-adically complete. In this case, one has  $c=\operatorname{depth}\mspace{1mu}R=\operatorname{dim}\mspace{1mu}R$ and $\lc[\mathfrak{m}]{c}{R}$ is Artinian by \cite[Thm.~7.1.3]{BroSha}. Thus $\upOmega_\mathfrak{m} = \lc[\mathfrak{m}]{c}{R}^v$ is finitely generated so \prpref{Omega} below shows that $\upOmega_\mathfrak{m}$ is the dualizing module for $R$.
\end{exa}

\begin{prp}
  \label{prp:Omega}
  If $R$ is $\mathfrak{m}$-adically complete and relative Cohen--Macau\-lay w.r.t.~$\mathfrak{a}$, then $\upOmega_\mathfrak{a}$ has finite injective dimension, $\Ext{R}{i}{\upOmega_\mathfrak{a}}{\upOmega_\mathfrak{a}}=0$ for $i>0$, and $\Hom{R}{\upOmega_\mathfrak{a}}{\upOmega_\mathfrak{a}}\cong R$. Furthermore, in the derived category $\D{R}$ there is an isomorphism
  $\upOmega_\mathfrak{a} \cong \upSigma^{-c}\LL{E_R(k)}$.
\end{prp}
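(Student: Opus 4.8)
Write $H=\lc{c}{R}$, so that $\upOmega_\mathfrak{a}=H^{v}$, and recall that $\widehat{R}^{\mathfrak{a}}=R$ (being $\mathfrak{m}$-adically complete, $R$ is $\mathfrak{a}$-adically complete; cf.\ \obsref{fg}) and that $\Hom{R}{E_R(k)}{E_R(k)}\cong\widehat{R}=R$ by Matlis duality. The plan is to establish the assertions in the order listed, with all the weight resting on one derived-category computation. First, finite injective dimension: by \lemref{Hc}(a), $H$ has finite projective dimension, hence a finite resolution $0\to P_{n}\to\cdots\to P_{0}\to H\to 0$ by projectives; since $(-)^{v}=\Hom{R}{-}{E_R(k)}$ is exact and carries a projective module (a summand of a free module) to a summand of a product of copies of $E_R(k)$, hence to an injective module, applying $(-)^{v}$ yields a finite injective coresolution $0\to\upOmega_\mathfrak{a}\to P_{0}^{v}\to\cdots\to P_{n}^{v}\to 0$, so $\id{R}{\upOmega_\mathfrak{a}}\leqslant n$. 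The isomorphism $\upOmega_\mathfrak{a}\cong\upSigma^{-c}\LL{E_R(k)}$ is then immediate from \lemref{Hc}(e): that result gives $\LL{E_R(k)}\cong\upSigma^{c}\RHom{R}{H}{E_R(k)}$ in $\D{R}$, and as $E_R(k)$ is injective, $\RHom{R}{H}{E_R(k)}=\Hom{R}{H}{E_R(k)}=\upOmega_\mathfrak{a}$ sits in degree $0$.

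For the Ext-vanishing and $\Hom{R}{\upOmega_\mathfrak{a}}{\upOmega_\mathfrak{a}}\cong R$, I would show that the whole complex $\RHom{R}{\upOmega_\mathfrak{a}}{\upOmega_\mathfrak{a}}$ is isomorphic to $R$ in $\D{R}$, and then read these two statements off as its homology in degrees $>0$ and $0$. Using the isomorphism just proved together with the Greenlees--May presentations $\LL{}\cong\RHom{R}{\C}{-}$ and $\RG{}\cong\C\otimes^\mathbf{L}_R-$ from \ref{lc-definitions}, and the derived Hom--tensor adjunction, one computes
\begin{align*}
  \RHom{R}{\upOmega_\mathfrak{a}}{\upOmega_\mathfrak{a}}
  &\cong\RHom{R}{\RHom{R}{\C}{E_R(k)}}{\RHom{R}{\C}{E_R(k)}} \\
  &\cong\RHom{R}{\RHom{R}{\C}{E_R(k)}\otimes^\mathbf{L}_R\C}{E_R(k)}
   \,=\,\RHom{R}{\RG{\LL{E_R(k)}}}{E_R(k)}\,.
\end{align*}
By the MGM equivalence (cf.\ \cite[Sect.~7]{PSY} and \cite[Sect.~8]{VyasYekutieli}) one has $\RG{\LL{E_R(k)}}\cong\RG{E_R(k)}$, and since $E_R(k)$ is $\mathfrak{m}$-torsion---hence $\mathfrak{a}$-torsion, as $\mathfrak{a}\subseteq\mathfrak{m}$---and injective, $\RG{E_R(k)}\cong\G{E_R(k)}=E_R(k)$. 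Therefore $\RHom{R}{\upOmega_\mathfrak{a}}{\upOmega_\mathfrak{a}}\cong\RHom{R}{E_R(k)}{E_R(k)}=\Hom{R}{E_R(k)}{E_R(k)}\cong R$, which is what is needed.

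The step I expect to be the real obstacle is the identification of $\RHom{R}{\C}{E_R(k)}\otimes^\mathbf{L}_R\C$ (equivalently, of $H^{v}\otimes^\mathbf{L}_R H$): because $H=\lc{c}{R}$ is neither finitely generated nor a perfect complex, one cannot simply invoke a tensor-evaluation isomorphism to reduce it to $\RHom{R}{H}{H}\otimes^\mathbf{L}_R E_R(k)$, which would be $E_R(k)$ by \lemref{Hc}(b) and (c); instead one must route through the Greenlees--May/MGM machinery, exploiting that $\C$ is at least a bounded complex of flats. A variant with the same core difficulty is to use \lemref{Hc}(d) to write $H\otimes^\mathbf{L}_R\upOmega_\mathfrak{a}\cong\upSigma^{c}\RG{\upOmega_\mathfrak{a}}$ and then compute $\RG{\upOmega_\mathfrak{a}}=\RG{H^{v}}\cong(\LL{H})^{v}\cong(\upSigma^{c}R)^{v}=\upSigma^{-c}E_R(k)$ via \lemref{Hc}(b), (c), (e) and the MGM duality $\RG{(-)^{v}}\cong(\LL{-})^{v}$; either way, that is the one place where the hypotheses on $R$ and the facts collected in \ref{lc-definitions} genuinely do the work.
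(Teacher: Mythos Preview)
Your proof is correct and closely parallels the paper's for the first two assertions (finite injective dimension via \lemref{Hc}(a) and Matlis duality; the derived-category isomorphism via \lemref{Hc}(e)). For the computation of $\RHom{R}{\upOmega_\mathfrak{a}}{\upOmega_\mathfrak{a}}$ you take a slightly different route. The paper writes
\[
\RHom{R}{\upOmega_\mathfrak{a}}{\upOmega_\mathfrak{a}} \cong \RHom{R}{\LL{E_R(k)}}{\LL{E_R(k)}} \cong \LL{\RHom{R}{E_R(k)}{E_R(k)}} \cong \LL{R} \cong \widehat{R}^{\mathfrak{a}} \cong R,
\]
invoking a cited commutation identity (\cite[(2.6)]{Frankild}, \cite[Lem.~7.6]{PSY}) that lets $\LL{}$ pass outside the $\RHom$. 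You instead use Hom--tensor adjunction to rewrite the $\RHom$ as $\RHom{R}{\RG{\LL{E_R(k)}}}{E_R(k)}$ and then apply MGM ($\RG{\LL{}}\cong\RG{}$) together with the observation that $E_R(k)$ is $\mathfrak{a}$-torsion injective, so $\RG{E_R(k)}=E_R(k)$. Both arguments draw on the same MGM machinery and the same endpoint $\Hom{R}{E_R(k)}{E_R(k)}\cong R$; your version is a bit more self-contained (it unwinds the cited lemma via adjunction rather than quoting it), while the paper's is shorter. Your closing paragraph about the ``real obstacle'' is unnecessary hedging: the MGM step you already carried out \emph{is} the identification of $\LL{E_R(k)}\otimes^\mathbf{L}_R\C$, so there is no remaining gap.
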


\begin{proof}
  It is immediate from \lemref{Hc}(a) that $\upOmega_\mathfrak{a}$ has finite injective dimension. Part (e) of the same lemma shows that $\upOmega_\mathfrak{a} \cong \upSigma^{-c}\LL{E_R(k)}$ in $\D{R}$, and hence
\begin{displaymath}
  \RHom{R}{\upOmega_\mathfrak{a}}{\upOmega_\mathfrak{a}} 
\cong \RHom{R}{\LL{E_R(k)}}{\LL{E_R(k)}} \cong \LL{\RHom{R}{E_R(k)}{E_R(k)}}\;,
\end{displaymath}
where the last isomorphism comes from \cite[(2.6)]{Frankild} and \cite[Lem.~7.6]{PSY}. As $R$ is $\mathfrak{m}$-adically complete, we have $\RHom{R}{E_R(k)}{E_R(k)} \cong R$, and thus the last expression above is the same as \smash{$\LL{R} \cong \widehat{R}^{\mathfrak{a}}$}. As $R$ is also $\mathfrak{a}$-adically complete, we get $\RHom{R}{\upOmega_\mathfrak{a}}{\upOmega_\mathfrak{a}} \cong R$.
\end{proof}

%\enlargethispage{8ex}

\begin{thm}
  \label{thm:CM2}
  Assume that $R$ is relative Cohen--Macaulay w.r.t.~$\mathfrak{a}$ in the sense of \dfnref{Zargar} and set $c=c(\mathfrak{a})$. For every integer $t$ there is a duality:
\begin{displaymath}
  \xymatrix@C=5pc{
    \CM{t}{R} \ar@<0.6ex>[r]^-{\Ext{R}{c-t}{-}{\upOmega_\mathfrak{a}}} & \CM{t}{R} \ar@<0.6ex>[l]^-{\Ext{R}{c-t}{-}{\upOmega_\mathfrak{a}}}
  }\!.
\end{displaymath}
\end{thm}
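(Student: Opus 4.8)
The plan is to obtain the duality as an instance of \thmref{equivalence} for the adjunction of \exaref{1} with $\Gamma=\Lambda=R$ and $T=\upOmega_\mathfrak{a}$, namely $\Hom{R}{-}{\upOmega_\mathfrak{a}}^\mathrm{op}\colon \Mod{R}\rightleftarrows\Mod{R}^\mathrm{op}\colon\Hom{R}{-}{\upOmega_\mathfrak{a}}$, for which $\Ld{i}{\Fu{F}}=\Ext{R}{i}{-}{\upOmega_\mathfrak{a}}^\mathrm{op}$ and $\Rd{i}{\Fu{G}}=\Ext{R}{i}{-}{\upOmega_\mathfrak{a}}$. Applying \thmref{equivalence} with $\ell=c-t$ yields an adjoint equivalence $\Fix[]{c-t}{\Mod{R}}\rightleftarrows\coFix[]{c-t}{\Mod{R}^\mathrm{op}}$ realised by $\Ext{R}{c-t}{-}{\upOmega_\mathfrak{a}}$ in both directions. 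I would next observe that this adjunction is \emph{self-opposite}: passing to opposite categories interchanges $\Fu{F}$ and $\Fu{G}$, so the opposite adjunction is again the original one. Since for any adjunction one has $B\in\coFix[]{\ell}{\cB}$ if and only if $B^\mathrm{op}$ is an $\ell$-fixed object for the opposite adjunction --- immediate from \dfnref{Fix-Cofix}, the identities $\Ld{i}{(\Fu{T}^\mathrm{op})}=(\Rd{i}{\Fu{T}})^\mathrm{op}$ of \ref{derived-functors}, and the compatibility of the maps of \dfnref{unit-counit} with $(-)^\mathrm{op}$ --- it follows that $\coFix[]{c-t}{\Mod{R}^\mathrm{op}}=(\Fix[]{c-t}{\Mod{R}})^\mathrm{op}$. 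Hence the equivalence is in fact a duality on $\Fix[]{c-t}{\Mod{R}}$ implemented by $\Ext{R}{c-t}{-}{\upOmega_\mathfrak{a}}$, and the theorem reduces to the identification $\Fix[]{c-t}{\Mod{R}}=\CM{t}{R}$.

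For this identification the computational engine is the isomorphism $\RHom{R}{M}{\upOmega_\mathfrak{a}}\cong\upSigma^{-c}(\RG{M})^v$ in $\D{R}$, which I would derive from $\upOmega_\mathfrak{a}=\lc{c}{R}^v$, the isomorphism $\lc{c}{R}\cong\upSigma^{c}\C$ of the proof of \lemref{Hc}, Hom--tensor adjunction against $E_R(k)$, and $\RG{M}\cong\C\otimes^\mathbf{L}_R M$ from \ref{lc-definitions}. Taking cohomology gives $\Ext{R}{i}{M}{\upOmega_\mathfrak{a}}\cong\lc{c-i}{M}^v$ naturally in $M$; as Matlis duality is faithful, \dfnref{Fix-Cofix}(i) is thereby equivalent to \con{CM1}. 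Assume henceforth \dfnref{Fix-Cofix}(i), equivalently \con{CM1}; then $\RG{M}$ has homology in the single degree $-t$, so $\RHom{R}{M}{\upOmega_\mathfrak{a}}\cong\upSigma^{t-c}\lc{t}{M}^v$ and $\Ld{c-t}{\Fu{F}}(M)$ is $\lc{t}{M}^v$. Applying $\RHom{R}{-}{\upOmega_\mathfrak{a}}$ once more, and using Hom--tensor adjunction, the identity $(\RG{Z})^v\cong\LL{Z^v}$ that follows from \ref{lc-definitions}, and the Matlis--Greenlees--May isomorphism $\RG\LL{}\cong\RG{}$, one obtains $\RHom{R}{\RHom{R}{M}{\upOmega_\mathfrak{a}}}{\upOmega_\mathfrak{a}}\cong\LL{M^{vv}}$ in $\D{R}$, whence $\RHom{R}{\lc{t}{M}^v}{\upOmega_\mathfrak{a}}\cong\upSigma^{t-c}\LL{M^{vv}}$ and $\Rd{i}{\Fu{G}}(\Ld{c-t}{\Fu{F}}(M))\cong\lh{c-t-i}{M^{vv}}$. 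Thus \dfnref{Fix-Cofix}(ii) becomes $\lh{i}{M^{vv}}=0$ for all $i>0$, i.e.\ condition $(\dagger)$ of \lemref{theta}, while \dfnref{Fix-Cofix}(iii) asserts that a certain natural morphism $M\to\lh{0}{M^{vv}}$ is an isomorphism. Comparing with \lemref{theta}, the identification $\Fix[]{c-t}{\Mod{R}}=\CM{t}{R}$ will follow once this natural morphism is recognised as the map $\theta_M$ of \dfnref{theta}.

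The delicate point --- and the step I expect to be the main obstacle --- is precisely this last comparison of morphisms. I would handle it by unwinding \dfnref{unit-counit} and the diagram \eqref{big}: there $h^\ell_{A,B}$ is assembled from the maps of \lemref[Lemmas~]{P-iso} and \lemref[]{I-iso} and the homotopy-category adjunction, so $\eta^{c-t}_M$ equals $\H{0}{-}$ applied to the adjunction transpose $\Fu{P}(M)\to\Fu{G}(\upSigma^{c-t}\Fu{I}(\Ld{c-t}{\Fu{F}}(M)))$ of a canonical quasi-isomorphism; read in $\D{R}$ this is exactly the biduality morphism $M\to\RHom{R}{\RHom{R}{M}{\upOmega_\mathfrak{a}}}{\upOmega_\mathfrak{a}}$. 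It then remains to check that, under the isomorphism $\RHom{R}{\RHom{R}{M}{\upOmega_\mathfrak{a}}}{\upOmega_\mathfrak{a}}\cong\LL{M^{vv}}$, this biduality morphism corresponds to the composite of the Matlis biduality map $\delta_M\colon M\to M^{vv}$ with the canonical $\D{R}$-morphism $M^{vv}\to\LL{M^{vv}}$ --- a naturality compatibility between Matlis duality and the Greenlees--May construction, most cleanly verified through the {\v C}ech model $\upOmega_\mathfrak{a}\cong\upSigma^{-c}\C^v$. Passing to $\H{0}{-}$ and invoking naturality of $\psi$ (see \ref{lc-definitions}) then identifies the composite with $\psi_{M^{vv}}\circ\delta_M=\theta_M$, so \dfnref{Fix-Cofix}(iii) is equivalent to $(\ddagger)$ and \lemref{theta} completes the proof. (A bookkeeping remark: \prpref{Fix-simplified} does \emph{not} apply here when $R$ fails to be $\mathfrak{m}$-adically complete --- already $\Hom{R}{\upOmega_\mathfrak{a}}{\upOmega_\mathfrak{a}}\cong\widehat{R}^\mathfrak{m}\neq R$ --- which is why \con{CM2} and \con{CM3} are genuinely needed; on the other hand all the derived identities above, as well as \lemref{Hc} and \lemref{theta}, hold with no completeness hypothesis, in agreement with the assumptions of the theorem.)
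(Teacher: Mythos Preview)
Your proposal is correct and follows essentially the same strategy as the paper: apply \thmref{equivalence} with $\ell=c-t$ to the adjunction of \exaref{1} for $T=\upOmega_\mathfrak{a}$, then identify $\Fix[]{c-t}{\Mod{R}}$ with $\CM{t}{R}$ by translating conditions (i)--(iii) of \dfnref{Fix-Cofix} into \con{CM1} and the conditions $(\dagger)$, $(\ddagger)$ of \lemref{theta}. The only differences are cosmetic: where the paper invokes $\upOmega_\mathfrak{a}\cong\upSigma^{-c}\LL{E_R(k)}$ and the identities from \cite{Frankild} and \cite{PSY} to reach $\RHom{R}{\RHom{R}{M}{\upOmega_\mathfrak{a}}}{\upOmega_\mathfrak{a}}\cong\LL{M^{vv}}$, you reach the same isomorphism via the {\v C}ech model $\upOmega_\mathfrak{a}\cong\upSigma^{-c}\C^v$, the identity $(\RG{Z})^v\cong\LL{Z^v}$, and MGM; and you are more explicit than the paper both about why the equivalence is a \emph{duality} (the self-opposite nature of the adjunction) and about the identification $\alpha_M\circ\eta^{c-t}_M=\theta_M$, which the paper simply asserts.
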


\begin{proof}
We consider the adjunction $(\Fu{F},\Fu{G})$ from \exaref{1} with $\Gamma=R=\Lambda$ and \mbox{$T=\upOmega_\mathfrak{a}$}. From \thmref{equivalence} with $\ell=c-t$ we conclude that the functor $\Ext{R}{c-t}{-}{\upOmega_\mathfrak{a}}$ yields a duality (that is, a ``contravariant equivalence'') on the category $\mathcal{F}:=\Fix[]{c-t}{\Mod{R}}$, whose objects are those $R$-modules $M$ that satisfy the following conditions: (i) $\Ext{R}{i}{M}{\upOmega_\mathfrak{a}}=0$ for all \mbox{$i \neq c-t$}. 
(ii) $\Ext{R}{i}{\Ext{R}{c-t}{M}{\upOmega_\mathfrak{a}}}{\upOmega_\mathfrak{a}}=0$ for all \mbox{$i \neq c-t$}. (iii) The canonical map $\eta^{c-t}_M \colon M \to \Ext{R}{c-t}{\Ext{R}{c-t}{M}{\upOmega_\mathfrak{a}}}{\upOmega_\mathfrak{a}}$ is an isomorphism. We now show $\mathcal{F}=\CM{t}{R}$, that is, we prove that an $R$-module $M$ satisfies (i), (ii), and (iii) if and only if it satisfies \con{CM1}, \con{CM2}, and \con{CM3} in \dfnref{CM}. First note that 
\begin{displaymath}
  \Ext{R}{i}{M}{\upOmega_\mathfrak{a}} = \Ext{R}{i}{M}{\lc{c}{R}^v} \cong \Tor{R}{i}{\lc{c}{R}}{M}^v \cong \lc{c-i}{M}^v\;,
\end{displaymath}
where the last isomorphism is by \lemref{Hc}(d). It follows that condition (i) is equivalent to \con{CM1}. If (i) holds, then $\Ext{R}{c-t}{M}{\upOmega_\mathfrak{a}} \cong \upSigma^{c-t}\RHom{R}{M}{\upOmega_\mathfrak{a}}$ in $\D{R}$, which explains the first isomorphism in the computation below. The second isomorphism below follows as $\upOmega_\mathfrak{a} \cong \upSigma^{-c}\LL{E_R(k)}$, see \prpref{Omega}, and the third isomorphism comes from \cite[(2.6)]{Frankild} and \cite[Lem.~7.6]{PSY}. The last isomorphism is by definition (see \ref{lc-definitions}):
\begin{align*}
  \Ext{R}{i}{\Ext{R}{c-t}{M}{\upOmega_\mathfrak{a}}}{\upOmega_\mathfrak{a}}
  &\cong 
  \mathrm{H}_{-i}\RHom{R}{\upSigma^{c-t}\RHom{R}{M}{\upOmega_\mathfrak{a}}}{\upOmega_\mathfrak{a}}
  \\
  &\cong 
  \mathrm{H}_{(c-t)-i}\RHom{R}{\RHom{R}{M}{\LL{E_R(k)}}}{\LL{E_R(k)}}
  \\        
  &\cong 
  \mathrm{H}_{(c-t)-i}\LL{\RHom{R}{\RHom{R}{M}{E_R(k)}}{E_R(k)}}
  \\
  &\cong 
  \lh{(c-t)-i}{M^{vv}}\;.
\end{align*}
Thus, under assumption of (i), condition (ii) is equivalent to ($\dagger$) $\lh{n}{M^{vv}}=0$ for all $n>0$. Setting $i=c-t$ in the computation above we get an isomorphism,
\begin{displaymath}
  \alpha_M \colon
  \Ext{R}{c-t}{\Ext{R}{c-t}{M}{\upOmega_\mathfrak{a}}}{\upOmega_\mathfrak{a}} \stackrel{\cong}{\longrightarrow} \lh{0}{M^{vv}}\;,
\end{displaymath}
which identifies the map $\eta^{c-t}_M$ from condition (iii) above with the map $\theta_M$ from \dfnref{theta}, that is, $\alpha_M \circ \eta^{c-t}_M = \theta_M$.	So under assumption of (i), condition (iii) is equivalent to ($\ddagger$) $\theta_M$ is an isomorphism. Now apply \lemref{theta}.
\end{proof}

\section*{Acknowledgments}
Part of this work was initiated when Olgur Celikbas visited the Department of Mathematical Sciences at the University of Copenhagen in June 2015. He is grateful for the kind hospitality and the support of the department.

We thank Greg Piepmeyer, Amnon Yekutieli, Majid Rahro Zargar, and the anonymous referee for useful comments and suggestions.

\enlargethispage{6.1ex}

\def\cprime{$'$} \def\soft#1{\leavevmode\setbox0=\hbox{h}\dimen7=\ht0\advance
  \dimen7 by-1ex\relax\if t#1\relax\rlap{\raise.6\dimen7
  \hbox{\kern.3ex\char'47}}#1\relax\else\if T#1\relax
  \rlap{\raise.5\dimen7\hbox{\kern1.3ex\char'47}}#1\relax \else\if
  d#1\relax\rlap{\raise.5\dimen7\hbox{\kern.9ex \char'47}}#1\relax\else\if
  D#1\relax\rlap{\raise.5\dimen7 \hbox{\kern1.4ex\char'47}}#1\relax\else\if
  l#1\relax \rlap{\raise.5\dimen7\hbox{\kern.4ex\char'47}}#1\relax \else\if
  L#1\relax\rlap{\raise.5\dimen7\hbox{\kern.7ex
  \char'47}}#1\relax\else\message{accent \string\soft \space #1 not
  defined!}#1\relax\fi\fi\fi\fi\fi\fi} \def\cprime{$'$}
  \providecommand{\arxiv}[2][AC]{\mbox{\href{http://arxiv.org/abs/#2}{\sf
  arXiv:#2 [math.#1]}}}
  \providecommand{\oldarxiv}[2][AC]{\mbox{\href{http://arxiv.org/abs/math/#2}{\sf
  arXiv:math/#2
  [math.#1]}}}\providecommand{\MR}[1]{\mbox{\href{http://www.ams.org/mathscinet-getitem?mr=#1}{#1}}}
  \renewcommand{\MR}[1]{\mbox{\href{http://www.ams.org/mathscinet-getitem?mr=#1}{#1}}}
\providecommand{\bysame}{\leavevmode\hbox to3em{\hrulefill}\thinspace}
\providecommand{\MR}{\relax\ifhmode\unskip\space\fi MR }
% \MRhref is called by the amsart/book/proc definition of \MR.
\providecommand{\MRhref}[2]{%
  \href{http://www.ams.org/mathscinet-getitem?mr=#1}{#2}
}
\providecommand{\href}[2]{#2}

\enlargethispage{8ex}

\end{document}